\newcommand{\begeq}{\begin{equation}}
\newcommand{\eeq}{\end{equation}}
\theoremstyle{plain}
\newtheorem{Main}{Theorem}
\newtheorem{Thm}{Theorem}[section]
\newtheorem{Proposition}[Thm]{Proposition}
\newtheorem{Lemma}[Thm]{Lemma}
\newtheorem*{Claim}{Claim}
\newtheorem{Cor}[Thm]{Corollary}
\newtheorem{Rem}[Thm]{Remark}
\newtheorem{Definition}[Thm]{Definition}
\def\max{\operatorname{max}}
\def\min{\operatorname{min}}
\begin{document}

\title[Hausdorff dimensions and entropies]
{Exceptional sets for average conformal dynamical systems}
\author{Congcong Qu, Juan Wang\textsuperscript{*}}
\address{Congcong Qu, College of Big Data and software Engineering, Zhejiang Wanli University, Ningbo, 315107, Zhejiang, P.R.China}
\email{congcongqu@foxmail.com}
\address{Juan Wang, School of mathematics, physics and statistics, Shanghai University of Engineering
Science, Shanghai, 201620, P.R. China}
\email{wangjuanmath@sues.edu.cn}
%\date{\today}

\newcommand\blfootnote[1]{%
\begingroup
\renewcommand\thefootnote{}\footnote{#1}%
\addtocounter{footnote}{-1}%
\endgroup
}

\thanks{
The second auther is partially supported by NSFC (11871361, 11801395) and the Talent Program
of Shanghai University of Engineering Science.
}

\subjclass[2010]{37C45, 37B40, 37D25, 37F35}

\keywords{Hausdorff dimension, topological entropy, exceptional sets, average conformal dynamical systems}

\blfootnote{\textsuperscript{*}Corresponding author}

\begin{abstract}
Let $f: M \to M$ be a $C^{1+\alpha}$ map/diffeomorphism of a compact Riemannian manifold $M$ and $\mu$ be an expanding/hyperbolic ergodic $f$-invariant Borel probability measure on $M$. Assume $f$ is average conformal expanding/hyperbolic on the support set $W$ of $\mu$ and  $W$ is locally maximal. For any subset $A\subset W$ with small entropy or dimension,  we investigate the topological entropy and Hausdorff dimensions of the $A$-exceptional set and the limit $A$-exceptional set.
%Given a continuous map  $f$ of  a compact metric space  $X$ and a subset $A\subset X$, the exceptional set of $A$ is defined as $E^{+}_{f|_{X}}(A)=\{x\in X:\overline{\mathcal{O}_{f}(x)}\cap A={\varnothing}\}$, where $\mathcal{O}_{f}(x)$ denotes the forward orbit.
%For $C^{1+\alpha}$  average conformal dynamical systems and
%prove for a $C^{1+\alpha}$  average conformal map, and   any subset with small entropy or dimension, the corresponding exceptional set has large entropy or dimension.
%\iffalse
%For $C^{1+\alpha}$ average conformal maps on compact manifolds, we consider the Hausdorff dimensions and entropies of the exceptional sets of  small sets in the sense of the dimension or entropy. Precisely, for the endomorphisms, we prove that for a set $A$ with less topological entropy than that of the map, the topological entropy on the exceptional set of $A$ equals that of the map. Given a hyperbolic ergodic measure $\mu$, for a set $A$ with the Hausdorff dimension less than that of the measure $\mu$, the topological entropy on the exceptional set of $A$ is greater than or equal to the measure theoretical entropy of the map. Besides, for a set $A$ with the topological entropy less than the measure theoretical entropy, the Hausdorff dimension of the exceptional set of $A$ is greater than or equal to that of the measure $\mu$. For a set $A$ with the Hausdorff dimension less than the dynamical dimension, the Hausdorff dimension of the exceptional set of the set $A$ is greater than or equal to the dynamical dimension. For the invertible case, we have some similar results.
 %\fi
\end{abstract}

\maketitle

\tableofcontents

\section{Introduction and the main results}
\subsection{Background}

The present paper is motivated by \cite{Do}, \cite{CG1}, \cite{CG2} and \cite{WWCZ}.
Let $M$ be a compact Riemannian manifold and $f:M\rightarrow M$ be a continuous transformation. Let $W\subset M$ be $f$-invariant, that is $f(W)=W$. Given $A\subset W$, the $A$-{\it exceptional set} for $f|_{W}$ in $W$ is defined as
$$E^{+}_{f|_W}(A)=\{x\in W:\overline{\mathcal{O}_{f}(x)}\cap A={\varnothing}\},$$
where $\mathcal{O}_{f}(x)$ denotes the forward orbit of the point $x$ under the action of $f$, that is $\mathcal{O}_f(x)=\{f^n(x): n\in\mathbb{N}\cup\{0\}\}$,
and $\overline{\mathcal{O}_{f}(x)}$ is the closure of the set $\mathcal{O}_{f}(x)$.
The {\it limit} $A$-{\it exceptional set} for $f|_{W}$ in $W$ is defined as
$$I^{+}_{f|_W}(A)=\{x\in W:\omega_{f}(x)\cap A={\varnothing}\},$$
where $\omega_{f}(x)=\bigcap_{n\in \mathbb{N}}\overline{\{f^{k}(x)|k\geq n\}}$, and $\omega_{f}(x)$ is the set of limit points of $\mathcal{O}_{f}(x)$.
Let $\mu$ be an expanding (hyperbolic) ergodic $f$-invariant Borel probability measure on $M$ and $W$ is the support set of $\mu$.
Assume $f$ is average conformal expanding (average conformal hyperbolic) on $W$ and $W$ is locally maximal.
Roughly speaking, the average conformal expanding map (average conformal hyperbolic diffeomorphism) $f|_W$ is uniformly expanding  (not necessarily uniformly hyperbolic) and possesses only one positive Lyapunov exponent  (only one positive and one negative Lyapunov exponents) for any ergodic $f$-invariant measure on $W$ (See Definition \ref{defofac}).
For any subset $A\subset W$ with small entropy or dimension,  we investigate the topological entropy and Hausdorff dimensions of the $A$-exceptional set and the limit $A$-exceptional set.

%In this paper we consider $C^{1+\alpha}$ average conformal expanding maps or average conformal hyperbolic diffeomorphisms, which  %An example is given in \cite{ZCB} to show that such a dynamical system is indeed non-conformal.
%For any subset $A$ with small topological entropy $h(f|_W,A)$ or small Hausdorff dimension $\text{dim}_HA$, we investigate the topology entropy and Hausdorff dimensions of the $A$-exceptional set and the limit $A$-exceptional set.

Let $f: [0,1)\to[0,1)$ be the Gauss map,
\[f(x)=\begin{cases}
\frac{1}{x}-[\frac{1}{x}],&0<x<1,\\
0,&x=0,
\end{cases}\]
where $[\frac{1}{x}]$ denotes the integer part of $\frac1x$. For $x\in[0,1)$, $x$ is \emph{badly approximable} if there is a positive constant $c=c(x)$ such that for any reduced rational number $\frac{p}{q}$ we have~$|\frac{p}{q}-x| >\frac{c}{q^{2}}$. It is not difficult to see $x$ is badly approximable if and only if $x$ is in the $\{0\}$-exceptional set for the Gauss map $f$.
It is straightforward to check that the set of badly approximable numbers must have Lebesgue measure zero.
However Besicovitch and Jarnik \cite{Jarnik} proved that the Hausdorff dimension of the $\{0\}$-exceptional set is $1$. Abercrombie and Nair \cite{AN} proved the analogous result in the case of a Markov map on the interval. Let $J$ be the Julia set of an expanding rational map $T$ of degree at least $2$ on the Riemannian sphere. For each independent set $A\subset J$, (that is $A$ is finite and for pairs of not necessarily distinct points $z_1, z_2\in A$ satisfying $T^n(z_1)\neq z_2$ for each $n\in\mathbb{N}$), Abercrombie and Nair \cite{AN2} stated that the $A$-exceptional set has Hausdorff dimension equal to that of the Julia set $J$. Their approach is to construct a certain measure supported on the set of points whose forward orbit misses certain neighborhoods of $A$. Then they proved that the Hausdorff dimension of the $A$-exceptional set was arbitrarily close to $\text{dim}_HJ$.
Dani \cite{Dani} investigated special countable subsets $A\subset T^n$, $n\geq 1$, and showed that the $A$-exceptional set under a hyperbolic automorphism has full Hausdorff dimension $n$.
Dani's method relied on the theory of Schmidt games, which were first introduced by Schmidt in \cite{Schmidt}.
It is specific for the algebraic case but is rather inapplicable to general
Anosov diffeomorphisms.
For a transitive $C^2$ Anosov diffeomorphism $f: M\to M$ of a compact Riemannian manifold $M$, Urba$\acute{\text{n}}$ski \cite{U} showed that the Hausdorff dimension of the set of points with non-dense orbit under $f$ equals to the dimension of $M$. For an expanding endomorphism he also proved the same result. His approach is using elementary properties of Markov partitions for hyperbolic dynamical systems and a general McMullen's result (see \cite{Mcm}).
For a $C^2$-expanding self-maps on the circle, it was proved in \cite{Tseng1} that for the set of points with non-dense forward orbit is a winning
set for Schmidt games. Hence it has  full Hausdorff dimension. Later Tseng \cite{Tseng2} also proved that a certain Anosov diffeomorphism on the 2-torus has a winning
non-dense set and thus has full Hausdorff dimension. And he answered the question of whether there are non-algebraic dynamical systems with winning non-dense set in dimensions greater than one. Wu \cite{Wu1, Wu2, Wu3} extended Tseng's results to the general expanding endomorphisms and partially hyperbolic diffeomorphisms.
He used the tool of Schmidt game and the modified Schmidt game, which was introduced by Kleinbock and Weiss in \cite{KW}.

Notice that these results above are concerned with the exceptional set of a small set with at most countably many points. A breakthrough result is due to Dolgopyat.
Dolgopyat \cite{Do} considered the one-sided shift space, piecewise uniformly expanding maps of the interval,  Anosov diffeomorphisms on the surface, conformal  Anosov flows, and geodesic flows on Riemannian surfaces of negative curvature. He exploited the above methods in an essential way to get that, for a set with smaller topological entropy or Hausdorff dimension than the corresponding quantities of the map, its exceptional set has full entropy or full Hausdorff dimension respectively.

Basing on Dolgopyat's work, Campos and  Gelfert \cite{CG1,CG2} got the corresponding results for conformal $C^{1+\alpha}$ dynamical systems. %Their results are based on the approximation of a hyperbolic measure with positive entropy by repellers and horseshoes respectively in the sense of the entropy and the Lyapunov exponents. And a relation between the dimension, entropy and the Lyapunov exponents for conformal maps yields the results concerned with the Hausdorff dimension.
In \cite{CG1}, Campos and Gelfert considered a rational map  $f$ of degree $d\geq2$ on the  Riemannian sphere, and $A\subset J$. They proved that if the topological entropy of $A$ is smaller than the topological entropy of the full system, then the $A$-exceptional set has full topological entropy. Moreover, if the Hausdorff dimension of $A$ is smaller than the dynamical dimension of $f$, then the Hausdorff dimension of the $A$-exceptional set is larger than or equal to the dynamical dimension.
%for a subset  $A$ of the  Julia set $J$ with the topological entropy smaller than $h(f, J)$, the topological entropy on the  $A$-exceptional set equals to $h(f, J)$.
%For a subset $A$ with the Hausdorff dimension smaller than the dynamical dimension of  $f$ restricted to $J$, they also obtained the Hausdorff dimension of the  $A$-exceptional set is greater than or equal to the dynamical dimension of  .
For the case of nonuniformly hyperbolic and conformal diffeomorphisms, Campos and  Gelfert \cite{CG2} have the same conclusion. %proved that   for a subset $A \subset W$ with smaller topological entropy than that on the set $W$, the topological entropy on the $A-$exceptional set equals that on the set $W$. And for a set $A$ with the  Hausdorff dimension smaller than the dynamical dimension of  $f$ restricted to the set  $W$, they proved that the  Hausdorff  dimension of the  $A-$exceptional set is greater than or equal to  the dynamical dimension of $f$ restricted to the set $W$.

It is natural to ask whether Campos and Gelfert's results still hold for nonconformal dynamical systems? %Unfortunately there is no answer for general nonconformal dynamical systems.
In this paper, we obtain the results for ~$C^{1+\alpha}$ average conformal expanding/hyperbolic dynamical systems. The notion of the average conformal maps was introduced by Ban, Cao and Hu in \cite{BCH} for repellers, which generalized the concept of the quasi-conformal maps introduced by Barreira  in \cite{Barreira} and the concept of weakly conformal maps in \cite{Pesin}.
Wang, Wang, Cao and Zhao introduced the concept of average conformal hyperbolic sets in \cite{WWCZ}. In \cite{ZCB}, the authors gave an example which is average conformal but not conformal. So average conformal dynamical systems are indeed more general than conformal ones.
%Our results indeed extend Campos and Gelfert's results in \cite{CG1,CG2} to average conformal dynamical systems. %Besides, some technical propositions are proved in a different way from those in Campos and Gelfert's.

\subsection{Statements of the main results}

%For some definitions, we review them in Section $2$. %Assume that $M$ is a $d$-dimensional $C^{\infty}$ compact Riemannian manifold and $f:M \rightarrow M$ is a $C^{1+\alpha}$ local  diffeomorphism. We stress that the requirement that the map is of class of smoothness $C^{1+\alpha}$ is crucial as it allows us to utilize some powerful results of nonuniform hyperbolicity theory.
%Let $M$ be a compact Riemannian manifold and $f:M\rightarrow M$ be a continuous map. Given an $f$-invariant set $W\subset M$ and a set $A\subset W$, the $A$-{\it exceptional set} for $f|_{W}$ in $W$ is defined as $E^{+}_{f|_W}(A)=\{x\in W:\overline{\mathcal{O}_{f}(x)}\cap A={\varnothing}\}$ and the {\it limit} $A$-{\it exceptional set} for $f|_{W}$ in $W$ is defined as $I^{+}_{f|_W}(A)=\{x\in W:\omega_{f}(x)\cap A={\varnothing}\}$, where $\mathcal{O}_{f}(x)$ denotes the forward orbit of the point $x$ under the action of $f$ and $\omega_{f}(x)=\bigcap_{n\in \mathbb{N}}\overline{\{f^{k}(x)|k\geq n\}}$.

%In this paper, we will focus on the topological entropy and Hausdorff dimensions of the exceptional set and limit exceptional set of a set with small topological entropy or small Hausdorff dimension.

%\begin{Main}\label{hh}
%Let $f: M \to M$ be a $C^{1+\alpha}$ map of a compact Riemannian manifold $M$ and $W \subset M$ be a compact $f$-invariant locally maximal subset. If $f$ is average conformal expanding on $W$, then for every subset $A\subset W$ with  $h(f|_{W}, A) < h(f|_{W})$,  we have
%$$h(f|_{W}, E^{+}_{f|_{W}}(A))=h(f|_{W}).$$
%\end{Main}

Now we give the precise statements of our main results. First of all, we consider a $C^{1+\alpha}$ map $f: M \to M$ of a compact Riemannian manifold $M$, and an expanding ergodic $f$-invariant Borel probability measure $\mu$ on $M$. Assume $f$ is average conformal expanding (see the definition \ref{defofac}) on the support set $W$ of $\mu$ and $A\subset W$. Our first main result is that if the Hausdorff dimension of $A$ is smaller than the Hausdorff dimension of $\mu$, then the topological entropy and the Hausdorff dimension of the $A$-exceptional set are not smaller than the measure-theoretic entropy and the Hausdorff dimension of $\mu$ respectively. Moreover, if the topological entropy of $A$ is smaller than the measure-theoretic entropy of $\mu$, then the Hausdorff dimension of the $A$-exceptional set is not smaller than the Hausdorff dimension of $\mu$.

\begin{Main}\label{main1}
Let $f: M \to M$ be a $C^{1+\alpha}$ map of a compact Riemannian manifold $M$ and $\mu$ be an expanding ergodic $f$-invariant Borel probability measure on $M$. Denote the support set of $\mu$ by $W$. Suppose that $W$ is locally maximal and $f$ is average conformal expanding on $W$.
\begin{itemize}
  \item [(i)] For every subset $A\subset W$ such that $\dim_{H} A<\dim_{H} \mu$, then
               $$ h(f|_{W}, E^{+}_{f|_{W}}(A))\geq h_{\mu}(f|_{W})\quad \text{and}\quad \dim_H E^+_{f|_W}(A)\geq \dim_H\mu.$$
  \item [(ii)] For every subset  $A\subset W$ such that  $h(f|_{W}, A) < h_{\mu}(f|_{W})$, then
               $$\dim_{H}E^{+}_{f|_{W}}(A) \geq \dim_{H} \mu.$$
\end{itemize}
\end{Main}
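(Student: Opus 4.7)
\emph{Proof proposal.} The strategy follows Dolgopyat \cite{Do} and its adaptation to conformal $C^{1+\alpha}$ systems by Campos--Gelfert \cite{CG1, CG2}, modified to accommodate the average conformal (non-conformal) setting. The plan is to approximate $\mu$ from within by uniformly expanding Markov horseshoes, count on each horseshoe the Markov cylinders that meet a neighborhood of $A$, and show that under either hypothesis on $A$ the surviving cylinders contain a Cantor subset of the exceptional set whose entropy and Hausdorff dimension approach $h_{\mu}(f|_{W})$ and $\dim_{H}\mu$ respectively.

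\emph{Reduction to uniformly expanding horseshoes.} Using a Katok-type approximation for non-uniformly expanding $C^{1+\alpha}$ systems, I would produce compact $f$-invariant sets $\Lambda_k\subset W$ on which $f$ is uniformly expanding and topologically conjugate to a subshift of finite type, with $h_{\mathrm{top}}(f|_{\Lambda_k})\nearrow h_{\mu}(f|_W)$ and the Lyapunov exponent on $\Lambda_k$ converging to $\chi(\mu):=\int\log\|Df\|\,d\mu$. Combined with the Bowen-type dimension formula available in the average conformal setting, this yields $\dim_{H}\Lambda_k\nearrow h_{\mu}(f|_W)/\chi(\mu)=\dim_{H}\mu$. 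Since $E^{+}_{f|_{\Lambda_k}}(A\cap\Lambda_k)\subset E^{+}_{f|_{W}}(A)$, it suffices to prove the desired lower bounds on each $\Lambda_k$ and let $k\to\infty$ at the end.

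\emph{Counting and Cantor construction.} After equipping $\Lambda_k$ with a Markov partition, let $R_\omega$ denote the cylinder of an admissible word $\omega$ of length $m$. Bounded distortion from $C^{1+\alpha}$ regularity together with the uniform estimate built into the definition of average conformal expansion shows that $R_\omega$ is comparable to a ball of radius $\simeq e^{-m\chi_k}$ up to an $e^{\pm m\varepsilon}$ factor, where $\chi_k$ is the expansion rate on $\Lambda_k$. For part (i), fix $s$ with $\dim_{H}A<s<\dim_{H}\mu$, cover $A$ by balls $B(y_j,r_j)$ at scales $r_j\lesssim e^{-m\chi_k}$ with $\sum_j r_j^s$ small, and count the $m$-cylinders meeting this cover: their number is at most $e^{(s\chi_k+o(1))m}$, while the total number of $m$-cylinders grows like $e^{h_{\mathrm{top}}(f|_{\Lambda_k})m}$, and the inequality $s\chi_k<h_{\mathrm{top}}(f|_{\Lambda_k})$ holds for $k$ large. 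Iterating along a sparse sequence $m_1<m_2<\cdots$ and discarding the bad cylinders at each stage produces a Cantor set $K\subset E^{+}_{f|_{\Lambda_k}}(A)$; applying the mass distribution principle to a natural Bernoulli-type measure on $K$ then yields both the entropy and the Hausdorff dimension bounds asserted in (i). Part (ii) is handled analogously, but the bad-cylinder count is driven by the entropy hypothesis: the number of $m$-cylinders meeting $A$ grows at rate at most $h(f|_W,A)+\varepsilon<h_{\mathrm{top}}(f|_{\Lambda_k})$ for $k$ large, and the resulting Cantor set has Hausdorff dimension at least $(h_{\mathrm{top}}(f|_{\Lambda_k})-\varepsilon)/\chi_k$, which tends to $\dim_{H}\mu$.

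\emph{Main obstacle.} The principal difficulty absent in the conformal case is that the singular values of $Df^m$ need only agree up to a subexponential factor $e^{o(m)}$, so cylinders are only approximately balls. All diameter, distortion, and cylinder-counting estimates must absorb this error and still produce the strict inequality $s\chi_k<h_{\mathrm{top}}(f|_{\Lambda_k})$ in the limit $k\to\infty$. Keeping the estimates uniform enough to do so depends crucially on the uniform convergence in the definition of average conformal expansion, and on pushing Katok's approximation strongly enough that both the entropy and the Lyapunov exponent on $\Lambda_k$ simultaneously control the corresponding quantities of $\mu$.
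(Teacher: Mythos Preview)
Your plan is correct in outline and would succeed, but it duplicates work that the paper avoids. Both you and the paper begin with the same Katok-type reduction to a $(\chi(\mu),\varepsilon)$ average conformal repeller $W_\varepsilon\subset W$ with entropy and Lyapunov exponent $\varepsilon$-close to those of $\mu$. From that point the two arguments diverge.

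You propose to carry out the full Dolgopyat cylinder-counting and Cantor construction directly on $W_\varepsilon$, using the dimension hypothesis on $A$ to bound the number of bad $m$-cylinders. The paper instead proves a single auxiliary result, Proposition~\ref{relation}: for any subset $Z$ of a $(\chi,\varepsilon)$ average conformal repeller one has
\[
\frac{h(f|_{W_\varepsilon},Z)}{\chi+\varepsilon}\ \le\ \dim_H Z\ \le\ \frac{h(f|_{W_\varepsilon},Z)}{\chi-\varepsilon},
\]
obtained from Cao's Bowen equation \cite{Y.Cao} for average conformal repellers. This two-sided bound lets the paper convert the hypothesis $\dim_H A<\dim_H\mu$ into the purely topological inequality $h(f|_{W_\varepsilon},A\cap W_\varepsilon)<h(f|_{W_\varepsilon})$, at which point Proposition~5.1 of \cite{CG1} (the entropy statement $h(f,E^+(A))=h(f)$ on a repeller) applies as a black box---that result is symbolic and needs no conformality at all. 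A second application of Proposition~\ref{relation} then converts the entropy conclusion back into the dimension conclusion.

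What this buys: the only place the average conformal structure enters the paper's proof is in the entropy--dimension comparison of Proposition~\ref{relation}; the exceptional-set machinery itself is untouched from \cite{CG1}. Your route bakes the average conformal distortion estimates into the Cantor construction, which works but obscures this separation and forces you to redo the combinatorics. In particular, your ``main obstacle'' (absorbing the $e^{o(m)}$ eccentricity of cylinders) is exactly what Proposition~\ref{relation} packages once and for all; after that the argument is the conformal one verbatim.
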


%In \cite{CG1}, Campos and Gelfert considered a rational map of the Riemannian sphere and a subset $A$ of its Julia set, and proved that if the topological entropy of $A$ is less than the topological entropy of the full system, then the $A$-exceptional set has full topological entropy (see Theorem $1.1$ in \cite{CG1}).
%For a surface diffeomorphism $f$, a compact invariant locally maximal set $W$ and some subset $A\subset W$, in \cite{CG2} Campos and Gelfert proved the same result (see Theorem A in \cite{CG2}).%Actually following the proof of Theorem $1.1$ in \cite{CG1} step by step, one has the same result for general nonconformal and nonuniform expanding maps. We leave the details  to the reader.

Let $DD(f|_{W})$ be the dynamical dimension, which we recall the definition in Definition \ref{defofDD}.
The following Corollary shows that if the Hausdorff dimension of $A$ is smaller than the dynamical dimension of $f|_W$, then the Hausdorff dimension of the $A$-exceptional set is larger than or equal to the dynamical dimension of $f|_W$. Campos and Gelfert \cite{CG1} have the same conclusion in the case of nonuniformly expanding maps, which are conformal.

\begin{Cor}\label{dd}
Let $f: M \to M$ be a $C^{1+\alpha}$ map of a compact Riemannian manifold $M$ and $W \subset M$ be a compact $f$-invariant locally maximal subset. If $f$ is average conformal expanding on $W$, then for every subset $A\subset W$ with  $\dim_{H}A < DD(f|_{W})$, we have
$$\dim_{H}E^{+}_{f|_{W}}(A) \geq DD(f|_{W}).$$
\end{Cor}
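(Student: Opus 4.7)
The plan is to derive Corollary~\ref{dd} from Theorem~\ref{main1}(i) by a standard approximation argument: pick an ergodic expanding invariant measure whose Hausdorff dimension is close to $DD(f|_W)$, apply Theorem~\ref{main1}(i) to this measure, and then let the approximation tighten. The logical content of the Corollary is essentially variational, built on top of the ``measure-by-measure'' statement of Theorem~\ref{main1}(i).

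More concretely, recall that by Definition~\ref{defofDD} the dynamical dimension $DD(f|_W)$ is the supremum of $\dim_H \mu$ over suitable ergodic expanding $f$-invariant Borel probability measures on $W$. Given $A \subset W$ with $\dim_H A < DD(f|_W)$, I would fix $\epsilon > 0$ with $\dim_H A < DD(f|_W) - \epsilon$ and pick such a measure $\mu$ with $\dim_H \mu > DD(f|_W) - \epsilon$, so that automatically $\dim_H A < \dim_H \mu$. Writing $W_\mu := \supp(\mu) \subset W$, Theorem~\ref{main1}(i) applied to $\mu$ yields
$$\dim_H E^+_{f|_{W_\mu}}(A \cap W_\mu) \geq \dim_H \mu.$$
Since $W_\mu$ is forward invariant, every $x \in W_\mu$ satisfies $\overline{\mathcal{O}_f(x)} \subset W_\mu$, and consequently $\overline{\mathcal{O}_f(x)} \cap A = \overline{\mathcal{O}_f(x)} \cap (A \cap W_\mu)$; therefore $E^+_{f|_{W_\mu}}(A \cap W_\mu) \subset E^+_{f|_W}(A)$. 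Monotonicity of Hausdorff dimension then gives
$$\dim_H E^+_{f|_W}(A) \geq \dim_H \mu > DD(f|_W) - \epsilon,$$
and sending $\epsilon \to 0^+$ establishes the corollary.

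The main obstacle I anticipate is checking that the approximating measure $\mu$ genuinely satisfies the hypotheses of Theorem~\ref{main1}, which require $\supp(\mu)$ itself to be locally maximal. Since the Corollary only postulates local maximality of the ambient set $W$, this does not come for free. The natural way to handle it is to take the approximating measures to be supported on basic sets obtained via Katok-type horseshoe approximations inside $W$; such horseshoes are automatically locally maximal, and in the average conformal expanding setting they are known to carry ergodic measures whose Hausdorff dimensions approach $DD(f|_W)$ from below. Once this approximation is set up correctly, the deduction is purely variational and requires no new dynamical input beyond Theorem~\ref{main1}.
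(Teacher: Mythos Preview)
Your proposal is correct and follows essentially the same approach as the paper: approximate $DD(f|_W)$ from below by $\dim_H\mu_n$ for suitable ergodic expanding measures, invoke Theorem~\ref{main1}(i), and pass to the limit. The paper packages the Katok-type approximation you anticipate (to handle the local-maximality issue) into Proposition~\ref{repeller}, and then applies Theorem~\ref{main1} directly on the ambient $W$ rather than restricting to $\supp(\mu)$, but this is only a cosmetic difference.
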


In particular, if the Hausdorff dimension of $A$ is smaller than the Hausdorff dimension of $W$, then  the $A$-exceptional set has full Hausdorff dimension.
The following result gives a generalization  of the main Theorem  in \cite{U}.

\begin{Cor} \label{urbanski}
 Let $M$ be a compact Riemannian manifold and $ f: M \to M$ be a $C^{1+\alpha}$ map. Suppose $W\subset M$ is an average conformal repeller (see the definition \ref{defofacr}). Then for every subset $A\subset W$ with  $\dim_H A  < \dim_H W,$ we have
$$ \dim_H E^{+}_{f|_{W}}(A) = \dim_H W.$$
\end{Cor}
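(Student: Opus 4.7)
The upper bound $\dim_H E^{+}_{f|_W}(A) \le \dim_H W$ is immediate from the inclusion $E^{+}_{f|_W}(A) \subset W$, so the entire content of the corollary lies in the matching lower bound. My plan is to reduce this lower bound to Corollary \ref{dd} via the identity $DD(f|_W) = \dim_H W$ that holds for average conformal repellers. Once this identity is granted, the hypothesis $\dim_H A < \dim_H W$ becomes $\dim_H A < DD(f|_W)$, and Corollary \ref{dd} applied directly yields $\dim_H E^{+}_{f|_W}(A) \ge DD(f|_W) = \dim_H W$, closing the argument.

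The key input is therefore the variational dimension identity for average conformal repellers. In the classical conformal setting this is Bowen's formula together with the existence of an ergodic equilibrium state of full dimension on the repeller. In the average conformal case a parallel Bowen-type pressure equation is available (for example via Ban--Cao--Hu \cite{BCH}), which supplies an ergodic $f$-invariant measure $\mu$ supported on $W$ with $\dim_H \mu = \dim_H W$. The inequality $DD(f|_W) \le \dim_H W$ is automatic since every ergodic invariant measure on $W$ has dimension at most $\dim_H W$, so the existence of such a $\mu$ gives the reverse inequality and hence the required equality.

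An equivalent and perhaps more transparent route bypasses Corollary \ref{dd} and invokes Theorem \ref{main1}(i) directly: use the same dimension theory of average conformal repellers to produce ergodic $f$-invariant measures $\mu_n$ on $W$ with $\dim_H \mu_n \to \dim_H W$, so that eventually $\dim_H A < \dim_H \mu_n$; Theorem \ref{main1}(i) then gives $\dim_H E^{+}_{f|_W}(A) \ge \dim_H \mu_n$, and passing to the limit completes the proof. In either approach the main obstacle is not the application of Theorem \ref{main1} or Corollary \ref{dd} but the invocation of the variational dimension statement for average conformal repellers; this is nontrivial, yet it is precisely the dimension-theoretic content for which the average conformal framework was designed, and it does not require new work beyond the results already cited.
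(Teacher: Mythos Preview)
Your proposal is correct and follows essentially the same route as the paper: the paper also observes that an average conformal repeller carries an expanding ergodic measure $\mu$ with $\dim_H\mu=\dim_H W$ (citing Theorem~C in \cite{Y.Cao} rather than \cite{BCH}, but the content is the same), and then applies Corollary~\ref{dd}. Your alternative route via Theorem~\ref{main1}(i) and approximating measures is a minor variant of the same idea and equally valid.
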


Our second work is to consider a $C^{1+\alpha}$ diffeomorphism $f: M \to M$ of a compact Riemannian manifold $M$. We assume $f$ is average conformal hyperbolic (see the definition \ref{defofac}) on the support set $W$ of a hyperbolic ergodic $f$-invariant Borel probability measure $\mu$ and obtain the following results. %as Theorem \ref{main1}.

\begin{Main}\label{main2}
Let $f: M \to M$ be a $C^{1+\alpha}$ diffeomorphism of a $d$-dimensional compact Riemannian manifold $M$ and $\mu$ be a hyperbolic ergodic $f$-invariant Borel probability measure on $M$. Denote the support set of $\mu$ by $W$. Suppose $W$ is locally maximal and $f$ is average conformal hyperbolic on $W$.
\begin{itemize}
  \item [(i)] For every subset $A\subset W$ such that  $\dim_{H}A<\dim_{H}~\mu$, then
              $$h(f|_{W}, I^{+}_{f|_{W}}(A))\geq h_{\mu}(f)\quad \text{and}\quad \dim_H I^+_{f|_W}(A) = \dim_{H}E^{+}_{f|_{W}}(A)\geq \dim_{H}~\mu.$$
  \item [(ii)] For every subset $A\subset W$ such that  $h(f|_{W}, A)<h_{\mu}(f|_{W})$, then
             $$ \dim_{H}I^{+}_{f|_{W}}(A)\geq \dim_{H}~\mu.$$
\end{itemize}
\end{Main}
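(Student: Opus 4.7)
The plan is to reduce Theorem \ref{main2} to the corresponding lower bounds for the ordinary exceptional set $E^+_{f|_W}(A)$, which are then proved by the same horseshoe-plus-pruning scheme that underlies Theorem \ref{main1}, and to transfer the conclusions to $I^+_{f|_W}(A)$ via the set-theoretic identity
\[
I^+_{f|_W}(A) \;=\; \bigcup_{n \geq 0} f^{-n}\bigl(E^+_{f|_W}(A)\bigr).
\]
To justify this identity one first replaces $A$ by its closure: if $x \in I^+_{f|_W}(A)$ then $\omega_f(x)$ is compact and disjoint from $\overline A$, so admits a neighborhood $U$ disjoint from $\overline A$; for $N$ large enough $\{f^n x : n \geq N\} \subset U$, hence $f^N x \in E^+_{f|_W}(A)$. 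The reverse inclusion is trivial since $\omega_f(x) = \omega_f(f^N x) \subset \overline{\mathcal{O}_f(f^N x)}$. Because $f$ is a $C^1$ diffeomorphism of a compact manifold, each $f^{\pm n}$ is bi-Lipschitz and therefore preserves Hausdorff dimension, giving $\dim_H I^+_{f|_W}(A) = \dim_H E^+_{f|_W}(A)$; topological entropy is monotone under inclusion, so also $h(f|_W, I^+_{f|_W}(A)) \geq h(f|_W, E^+_{f|_W}(A))$.

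Thus it suffices to prove $h(f|_W, E^+_{f|_W}(A)) \geq h_\mu(f|_W)$ and $\dim_H E^+_{f|_W}(A) \geq \dim_H \mu$ under the respective hypotheses. I would imitate the program of Theorem \ref{main1}, adapted to the hyperbolic case. First, apply Katok's horseshoe approximation to the hyperbolic ergodic measure $\mu$ to produce, for each small $\varepsilon > 0$, a horseshoe $\Lambda \subset W$ with $h_{\mathrm{top}}(f|_\Lambda) > h_\mu(f) - \varepsilon$. Using average conformal hyperbolicity (Definition \ref{defofac}), both the stable and unstable bundles carry a single Lyapunov exponent, so the Ledrappier--Young-type dimension formula collapses to
\[
\dim_H \mu \;=\; \frac{h_\mu(f)}{\lambda^u(\mu)} \;+\; \frac{h_\mu(f)}{|\lambda^s(\mu)|},
\]
and the analogous expression governs invariant measures on $\Lambda$; with an additional loss of at most $\varepsilon$ one can choose $\Lambda$ so that $\dim_H \Lambda > \dim_H \mu - \varepsilon$ as well. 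A Markov partition of $\Lambda$ then furnishes a conjugacy to a topologically mixing subshift of finite type $(\Sigma,\sigma)$.

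In the symbolic model, orbits entering an $\varepsilon$-neighborhood $U_\varepsilon$ of $A$ correspond to a family $\mathcal{C}_n$ of length-$n$ cylinders. The hypothesis $\dim_H A < \dim_H \mu$ (respectively $h(f|_W,A) < h_\mu(f|_W)$) translates, via the average conformal comparison between cylinder diameters and Lyapunov exponentials, into an exponential upper bound on $|\mathcal{C}_n|$ strictly smaller than the exponential rate of the full shift. Following the pruning procedure of Dolgopyat~\cite{Do} and Campos--Gelfert~\cite{CG1,CG2}, one inductively removes the bad cylinders to obtain a subshift $\Sigma' \subset \Sigma$ whose entropy (resp.\ whose Bowen-type pressure at the parameter $\dim_H \mu$) differs from that of the full shift by at most $\varepsilon$. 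Its geometric realisation $K \subset \Lambda$ is a Cantor subset of $E^+_{f|_W}(A)$ whose entropy and Hausdorff dimension approximate $h_\mu(f)$ and $\dim_H \mu$ to within $\varepsilon$; letting $\varepsilon \to 0$ gives (i) and (ii).

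The main obstacle is the non-conformality. In the conformal setting of \cite{CG1,CG2}, each Bowen ball of length $n$ is essentially round of radius comparable to $e^{-n\lambda^u(x)}$ in the unstable direction and $e^{-n|\lambda^s(x)|}$ in the stable direction, so the mass distribution principle converts cylinder counts directly into Hausdorff-dimension bounds. Here Bowen balls need not be round, but the average conformal hypothesis ensures, for every $\varepsilon > 0$, a uniform $n_0$ after which all singular values of $Df^n|_{E^u_x}$ lie in $[e^{n(\lambda^u(\mu)-\varepsilon)}, e^{n(\lambda^u(\mu)+\varepsilon)}]$ (and analogously on $E^s_x$); thus at scale $e^{-n\lambda^u(\mu)}$ each Bowen ball deviates from a round one by at most a factor $e^{n\varepsilon}$. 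This near-roundness is precisely what the mass distribution argument needs, at the cost of an additional $\varepsilon$ in the dimension estimate which is absorbed as $\varepsilon \to 0$; this is where the average conformal hypothesis replaces in an essential way the conformality assumption of \cite{CG1,CG2}.
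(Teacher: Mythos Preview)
Your reduction goes in the wrong direction, and this is not a cosmetic issue.

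First, the identity $I^+_{f|_W}(A)=\bigcup_{n\ge 0}f^{-n}\bigl(E^+_{f|_W}(A)\bigr)$ is false for general $A$. The step ``$\omega_f(x)$ is compact and disjoint from $\overline A$'' does not follow from $\omega_f(x)\cap A=\varnothing$; it is easy to build $x$ whose forward orbit hits $A$ infinitely often while $\omega_f(x)$ meets only $\overline A\setminus A$. So you cannot pass from $I^+$ to a tail lying in $E^+$.

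More importantly, even granting the trivial inclusion $E^+\subset I^+$ (which by itself would let you transfer lower bounds \emph{from} $E^+$ \emph{to} $I^+$), the Dolgopyat pruning in the hyperbolic setting does not produce a subset of $E^+$. On a horseshoe the conjugacy is with a \emph{two}-sided shift, and to control $\dim_H(A\cap\Gamma)$ one must cover by Markov rectangles $\pi([i_{-m}\cdots i_{-1}.i_0\cdots i_{n-1}])$ whose diameters are governed by both the stable and unstable contractions. When these two-sided cylinders are converted into forbidden forward words $U_\ell=\pi^+(\sigma^{-m}C_\ell)$ and pruned, the resulting sequences only guarantee that $f^k x\notin R_\ell$ for $k\ge m_\ell$; that is, the constructed Cantor set sits inside $I^+_{f|_\Gamma}(A\cap\Gamma)$, not inside $E^+_{f|_\Gamma}(A\cap\Gamma)$. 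This is exactly why the paper proves Proposition~\ref{key} for $I^+$, establishes the entropy and dimension bounds for $I^+$ first, and only afterwards deduces $\dim_H E^+_{f|_W}(A)\ge\dim_H\mu$ in case~(i) via the decomposition
\[
I^+_{f|_W}(A)=E^+_{f|_W}(A)\cup\bigcup_{n\ge 0}f^{-n}(\tilde A),\qquad \tilde A=\{x\in A:\omega_f(x)\cap A=\varnothing\},
\]
together with $\dim_H f^{-n}(\tilde A)=\dim_H\tilde A\le\dim_H A<\dim_H\mu$. The same decomposition yields the equality $\dim_H I^+=\dim_H E^+$ claimed in~(i). Under the entropy hypothesis of~(ii) this last step is unavailable, which is precisely the content of the Remark following Theorem~\ref{main2}: the statement for $E^+$ under~(ii) is open.

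A secondary point: your sketch of the dimension estimate (``whose Hausdorff dimension approximates $\dim_H\mu$'') hides real work. The paper obtains it by showing that $I^+_{f|_{W_\varepsilon}}(A\cap W_\varepsilon)$ is $s$-saturated, proving a Bowen equation for $s$-saturated invariant subsets intersected with unstable leaves (Proposition~\ref{Bowen2}), and then combining the resulting unstable-leaf dimension with the full stable-leaf dimension via the local product structure and Lemma~\ref{dimensionlemma}. The average conformal hypothesis enters through the uniform pinching of singular values you describe, but it is used both in Proposition~\ref{Bowen2} and in the diameter estimates for the two-sided rectangles in Proposition~\ref{key}.
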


\begin{Rem}
	For technical reasons, we only state the second item of this result in terms of the dimension of the limit $A$-exceptional sets. It is still unknown for us whether this holds for $A$-exceptional sets.
\end{Rem}

 %that if the topological entropy of $A$ is less than the topological entropy of the full system then the $A$-exceptional set and the limit $A$-exceptional set for $(f, W)$ both have full topological entropy. Their result is true for for general nonconformal nonuniform hyperbolic dynamical systems. The proof is parallel.
The following Corollary extends Theorem C in \cite{CG2} to average conformal hyperbolic diffeomorphisms.  %which is nonconformal and nonuniform hyperbolic.

%Following the proof of Theorem $1.1$ in \cite{CG1} step by step, one has the same result for general nonconformal dynamical systems.

\begin{Cor}\label{ddd}
Let $f: M \to M$ be a $C^{1+\alpha}$ diffeomorphism of a $d$-dimensional compact Riemannian manifold $M$ and $W \subset M$ be a compact $f$-invariant locally maximal subset. If $f$ is average conformal hyperbolic on $W$,
then for any subset $A\subset W$ such that  $\dim_{H}A<DD(f|_{W}),$  we have
$$ \dim_{H}E^{+}_{f|_{W}}(A)\geq DD(f|_{W}).$$
\end{Cor}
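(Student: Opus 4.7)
The plan is to deduce this corollary from Theorem \ref{main2}(i) by approximating the dynamical dimension from below by a well-chosen hyperbolic ergodic measure. Per Definition \ref{defofDD}, $DD(f|_W)$ is the supremum of $\dim_H\mu$ over the class of hyperbolic ergodic $f$-invariant Borel probability measures on $W$. Since $\dim_H A < DD(f|_W)$, I fix $\eta > 0$ with $\dim_H A < DD(f|_W) - \eta$, and then pick such a measure $\mu$ with $\dim_H\mu > DD(f|_W) - \eta$; in particular $\dim_H A < \dim_H\mu$.

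Write $W_\mu := \supp(\mu) \subset W$. The aim is to apply Theorem \ref{main2}(i) to the system $f|_{W_\mu}$, the measure $\mu$, and the subset $A\cap W_\mu$, whose Hausdorff dimension is at most $\dim_H A < \dim_H\mu$. This will yield $\dim_H E^+_{f|_{W_\mu}}(A\cap W_\mu) \geq \dim_H \mu$. Because $W_\mu$ is closed and $f$-invariant, every forward orbit starting in $W_\mu$ has its closure inside $W_\mu$, so for every $x\in W_\mu$ we have $\overline{\mathcal{O}_f(x)}\cap A = \overline{\mathcal{O}_f(x)}\cap (A\cap W_\mu)$. Consequently $E^+_{f|_{W_\mu}}(A\cap W_\mu) \subset E^+_{f|_W}(A)$, and therefore
$$\dim_H E^+_{f|_W}(A) \;\geq\; \dim_H \mu \;>\; DD(f|_W) - \eta.$$
Letting $\eta\to 0^+$ finishes the argument.

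The main obstacle is verifying that Theorem \ref{main2} is actually applicable on $W_\mu$: one needs $W_\mu$ to be locally maximal and $f$ to remain average conformal hyperbolic on $W_\mu$. Average conformality is inherited by closed $f$-invariant subsets directly from Definition \ref{defofac}, so that half is straightforward. Local maximality, by contrast, is subtle because the topological support of a general ergodic measure need not be locally maximal. The way I would circumvent this is by a Katok-type approximation: replace $\mu$ by a measure supported on a basic set $\Lambda\subset W$ whose dimension still exceeds $DD(f|_W) - \eta$, using the approximation results developed in \cite{BCH} and \cite{WWCZ} for average conformal systems. A basic set is locally maximal by construction, $f|_\Lambda$ inherits average conformal hyperbolicity, and the support of a Gibbs-type measure on $\Lambda$ is all of $\Lambda$. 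With this replacement Theorem \ref{main2}(i) applies directly, and the inclusion/approximation argument above then delivers the conclusion.
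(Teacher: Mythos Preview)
Your approach is correct and follows the paper's line: pick ergodic measures $\mu$ with $\dim_H\mu$ close to $DD(f|_W)$, apply Theorem~\ref{main2}(i), and let the approximation run. The only difference is that you take an extra detour to enforce the literal hypothesis ``$W$ is the support of $\mu$'' in Theorem~\ref{main2}, by passing to a Katok horseshoe $\Lambda$ carrying a fully supported Gibbs-type measure and then using the inclusion $E^+_{f|_\Lambda}(A\cap\Lambda)\subset E^+_{f|_W}(A)$. The paper instead applies Theorem~\ref{main2} directly on the given locally maximal $W$ (not on $\operatorname{supp}\mu_n$), obtaining $\dim_H E^+_{f|_W}(A)\ge\dim_H\mu_n$ in one step. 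This shortcut is justified because the proof of Theorem~\ref{main2} uses $W$ only as an ambient locally maximal average conformal hyperbolic set containing the horseshoes $W_\varepsilon$ produced by Proposition~\ref{horseshoe}; the equality $W=\operatorname{supp}\mu$ is never actually invoked. So the obstacle you identified is real at the level of the stated hypotheses but evaporates once one inspects the proof, and your detour---while not wrong---is unnecessary.
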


%\begin{Main}\label{measure}
%Let $f: M \to M$ be a $C^{1+\alpha}$ diffeomorphism of a $d$-dimensional compact Riemannian manifold $M$ and $\mu$ be a hyperbolic ergodic $f$-invariant Borel probability measure on $M$. Suppose the support set of $\mu$ is locally maximal and $f$ is average conformal hyperbolic on $W\triangleq\text{supp}\mu$. Then for any subset $A\subset W$ such that  $dim_{H}A<dim_{H}~\mu,$ we have
%$$h(f|_{W}, I^{+}_{f|_{W}}(A))\geq h_{\mu}(f)\quad and\quad dim_{H}E^{+}_{f|_{W}}(A)\geq dim_{H}~\mu.$$
%\end{Main}

%\begin{Main}\label{dhd}
%Let $f: M \to M$ be a $C^{1+\alpha}$ diffeomorphism of a $d$-dimensional compact Riemannian manifold $M$ and $\mu$ be a hyperbolic ergodic $f$-invariant Borel probability measure on $M$. Suppose the support set of $\mu$ is locally maximal and $f$ is average conformal hyperbolic on $W\triangleq\text{supp}\mu$.  Then for any subset $A\subset W$ such that  $h(f|_{W}, A)<h_{\mu}(f|_{W})$, we have $$dim_{H}E^{+}_{f|_{W}}(A)\geq dim_{H}~\mu.$$
%\end{Main}

%\begin{Rem}
%We remark here that Theorem ~\ref{dhh}, ~\ref{measure} and ~\ref{ddd} are generalizations of Campos and Gelfert's results in ~\cite{CG2}. And Theorem~\ref{dhd} is new.
%\end{Rem}

This paper is organized as follows. In Section $2$ we recall some definitions and some preparatory results we need to use. In Section $3$ we give the proofs of Theorem \ref{main1}, Corollary \ref{dd} and Corollary \ref{urbanski}. In Section $4$ we give the proofs of Theorem \ref{main2} and Corollary \ref{ddd}.% the main results for endomorphisms. In Section 5 we state and give the proof of some technical results for diffeomorphisms. And in Section 6  we give the proof of the main results for diffeomorphisms.

\section{Preliminaries}
In this section we recall some notions  and basic facts from dynamical systems and dimension theory.

\subsection{Dimensions of sets and measures} We recall some notions and basic facts from dimension theory, see the book  \cite{Pesin} for a detailed introduction.

Let $X$ be a compact metric space equipped with a metric $d$. Given  a subset $Z$ of $X$, for $s\geq 0$ and $\delta>0$, define
\[
\mathcal{H}_{\delta}^{s}(Z)=\inf \left\{\sum_{i}|U_i|^s: \
Z\subset \bigcup_{i}U_i,~|U_i|\leq \delta\right\}
\]
where $|\cdot|$ denotes the diameter of a set. The quantity
$\mathcal{H}^{s}(Z):=\lim\limits_{\delta\rightarrow 0}\mathcal{H}_{\delta}^{s}(Z)$ is called the {\em $s$-dimensional Hausdorff measure} of $Z$. Define the {\em Hausdorff dimension} of $Z$, denoted by $\dim_H  Z$, as follows:
\[
\dim_H  Z =\inf \{s:\ \mathcal{H}^{s}(Z)=0\}=\sup \{s: \mathcal{H}^{s}(Z)=\infty\}.
\]

The Hausdorff dimension has the following properties:
\begin{enumerate}
\item [(i)]
 Monotonicity: for $Y_{1}\subset Y_{2}\subset X$, we have ~$\dim_{H}Y_{1}\leq \dim_{H}Y_{2}$.
\item [(ii)]  Countable stability:~$\dim_{H}(\bigcup_{i=0}^{\infty}B_{i})=\sup_{i}\dim_{H}B_{i}$.
%\item Bi-Lipschitz invariance: if both $f$ and the inverse of $f$ are Lipschitz, then for any subset $Y\subset X$,
%we have $dim_{H}Y=dim_{H}f(Y)$.
\end{enumerate}

The following  lemma is well-known in the field of fractal geometry,  see Falconer's book \cite{F} for proofs.
\begin{Lemma}\label{H}
Let X and Y be metric spaces, and $\Phi: X \rightarrow Y$ be an onto and $(C, r)$-H\"{o}lder continuous map for some $C > 0$ and $0<r<1$. Then $\dim_{H} Y\leq r^{-1}\dim_{H} X$.
\end{Lemma}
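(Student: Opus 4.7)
The plan is to transfer $\delta$-covers from $X$ to $Y$ via $\Phi$ and compare the corresponding Hausdorff premeasures. Set $s=\dim_H X$ and fix an arbitrary $\varepsilon>0$; I will show $\dim_H Y\le (s+\varepsilon)/r$, which upon letting $\varepsilon\to 0$ gives the claim.

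First, given any $\delta'>0$, choose $\delta>0$ small enough that $C\delta^r\le \delta'$. For any cover $\{U_i\}$ of $X$ with $|U_i|\le\delta$, the collection $\{\Phi(U_i)\}$ covers $Y$ since $\Phi$ is onto, and Hölder continuity yields $|\Phi(U_i)|\le C|U_i|^r\le C\delta^r\le\delta'$. Raising to the power $(s+\varepsilon)/r$ gives
\[
|\Phi(U_i)|^{(s+\varepsilon)/r}\le C^{(s+\varepsilon)/r}|U_i|^{s+\varepsilon},
\]
so summing and taking the infimum over all admissible covers of $X$ yields
\[
\mathcal{H}^{(s+\varepsilon)/r}_{\delta'}(Y)\le C^{(s+\varepsilon)/r}\,\mathcal{H}^{s+\varepsilon}_{\delta}(X).
\]

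Now let $\delta\to 0$ (which forces $\delta'\to 0$ under the coupling above, or simply take $\delta'$ small independently and then $\delta\to 0$ on the right-hand side). Since $s+\varepsilon>\dim_H X$, we have $\mathcal{H}^{s+\varepsilon}(X)=0$, and hence $\mathcal{H}^{(s+\varepsilon)/r}(Y)=0$. By the definition of Hausdorff dimension, $\dim_H Y\le (s+\varepsilon)/r$. Letting $\varepsilon\to 0$ produces $\dim_H Y\le s/r=r^{-1}\dim_H X$.

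There is no real obstacle here; this is a textbook estimate. The only subtlety worth being careful about is the interplay between the two mesh parameters $\delta$ and $\delta'$: one must make sure the diameter bound $C\delta^r\le\delta'$ is arranged before passing to the limit, so that the premeasure on $Y$ is genuinely computed at scale $\delta'\to 0$. Apart from that, the argument is a direct consequence of the definitions together with the Hölder estimate on diameters.
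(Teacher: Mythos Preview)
Your argument is correct and is precisely the standard textbook proof; the paper does not supply its own proof of this lemma but simply cites Falconer's book, where exactly this diameter-pushforward estimate appears. There is nothing to compare or correct.
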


\begin{Lemma}\label{dimensionlemma} \cite{CG2}
Suppose  $B_{1}, B_{2}$ are metric spaces and $E \subset B_{1}\times B_{2}$. If there exist two numbers  $b_{1}, b_{2}$ such that $\dim_{H} B_{1} \geq b_{1}$, and for any  $y\in B_{1}$,  $$\dim_{H}\left(E \cap (\{y\}\times B_{2})\right)\geq b_{2},$$
then  $\dim_{H} E \geq b_{1}+b_{2}$.
\end{Lemma}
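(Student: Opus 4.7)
The plan is to argue by contradiction using Frostman's lemma on $B_1$ combined with a fiberwise slicing estimate, a strategy in the spirit of Marstrand's classical slicing inequality. Suppose $\dim_{H} E < b_1 + b_2$, and choose $s < b_1$ and $t < b_2$ with $s + t > \dim_{H} E$, so that $\mathcal{H}^{s+t}(E) = 0$; since $\mathcal{H}^{s+t}_\delta \leq \mathcal{H}^{s+t}$, this means $\mathcal{H}^{s+t}_\delta(E) = 0$ for every $\delta > 0$.

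The first step is to invoke Frostman's lemma: since $\dim_{H} B_1 \geq b_1 > s$, there exist a finite nonzero Borel measure $\nu$ on $B_1$ and a constant $C > 0$ with $\nu(B(x,r)) \leq C r^s$ for all $x, r$, and hence $\nu(U) \leq C |U|^s$ for every Borel $U \subset B_1$. For each $n \geq 1$, fix a cover $\{U_{n,i}\}_i$ of $E$ by sets of diameter at most $2^{-n}$ with $\sum_i |U_{n,i}|^{s+t} < 2^{-n}$, and let $\pi_1 \colon B_1 \times B_2 \to B_1$ denote the $1$-Lipschitz projection. Then $|\pi_1(U_{n,i})| \leq |U_{n,i}|$, and for every $y \in B_1$ the slices $U_{n,i} \cap (\{y\} \times B_2)$ cover $E_y := E \cap (\{y\} \times B_2)$, are nonempty only when $y \in \pi_1(U_{n,i})$, and have diameter at most $|U_{n,i}|$. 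Setting
$$g_n(y) := \sum_{i \,:\, y \in \pi_1(U_{n,i})} |U_{n,i}|^t,$$
this yields $\mathcal{H}^t_{2^{-n}}(E_y) \leq g_n(y)$ for every $y$.

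Next, integrating against $\nu$ and applying the Frostman bound,
$$\int g_n \, d\nu \;=\; \sum_i |U_{n,i}|^t \, \nu(\pi_1(U_{n,i})) \;\leq\; C \sum_i |U_{n,i}|^{s+t} \;<\; C \cdot 2^{-n}.$$
Summing in $n$ and applying Tonelli yields $\sum_n g_n(y) < \infty$, hence $g_n(y) \to 0$, for $\nu$-a.e.\ $y \in B_1$. For such a $y$, $\mathcal{H}^t_{2^{-n}}(E_y) \to 0$, which forces $\mathcal{H}^t(E_y) = 0$ and therefore $\dim_{H} E_y \leq t < b_2$, contradicting the hypothesis that $\dim_{H} E_y \geq b_2$ for every $y \in B_1$.

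The main obstacle I anticipate is the measurability of the sets $\pi_1(U_{n,i})$ and of the functions $g_n$, needed for Tonelli to apply; this is handled cleanly by taking the cover elements to be closed (which does not affect the diameter sums), so that $\pi_1(U_{n,i})$ is analytic (indeed closed in reasonable cases) and $g_n$ is Borel measurable. A secondary point is that invoking Frostman's lemma in an abstract metric setting requires mild regularity such as analyticity of $B_1$, a condition that is automatic in the dynamical contexts where this lemma is applied later in the paper.
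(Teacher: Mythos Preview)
The paper does not provide its own proof of this lemma; it is stated with a citation to \cite{CG2} and no argument is given. So there is nothing in the paper to compare your proposal against.

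Your argument is correct and is the standard route to inequalities of this type: Frostman's lemma on the base $B_1$ produces a mass distribution of exponent $s<b_1$, and integrating the fiberwise $t$-premeasure against it forces $\mathcal{H}^t(E_y)=0$ for $\nu$-a.e.\ $y$, contradicting the uniform lower bound on the slice dimensions. The caveats you flag are the right ones: Frostman's lemma in an abstract metric space requires $B_1$ to be analytic (e.g.\ a Borel subset of a Polish space), and the projections $\pi_1(U_{n,i})$ need to be $\nu$-measurable; both are unproblematic in the applications later in the paper, where $B_1$ and $B_2$ are compact subsets of local stable/unstable manifolds. One minor sharpening: rather than summing over $n$ and invoking Tonelli, it suffices to observe that $\int g_n\,d\nu \le C\cdot 2^{-n}$ already forces $\liminf_n g_n(y)=0$ for $\nu$-a.e.\ $y$ (by Fatou), which is all you need since $\mathcal{H}^t_{2^{-n}}(E_y)$ is monotone in $n$.
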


\begin{Definition}
Let $\mu$ be a probability measure on $X$. The Hausdorff dimension of $\mu$ is defined as $$\dim_{H} \mu=\inf\{\dim_{H} Y: Y \subset X , ~\mu(Y)=1\}.$$
\end{Definition}

\begin{Definition}\label{defofDD}
Given a map $f:X\rightarrow X$ and a compact invariant set $W\subset X$, the dynamical dimension of $f|_{W}$ is defined as  $DD(f|_{W})=\sup \dim_{H}\mu$, where the supremum is taken over all the ergodic measure $\mu$ supported on $W$ with $h_{\mu}(f|_{W})>0$.
\end{Definition}

\subsection{Topological entropy and topological pressure}

We next recall the definition of the entropy for a general subset, not necessarily invariant or compact. See \cite{Bowen} for more details.

\begin{Definition}\label{deentropy}
Let $X$ be a compact metric space and $f: X\rightarrow X$ be a continuous surjective map. Suppose $\mathcal{A}=\{ A_{1}, A_{2}, ..., A_{n}\}$ is a finite open cover of $X$.
Given a subset $U \subset X$, we say $U \prec \mathcal{A}$ if there exists $ j \in \{1,2,...,n\}$ such that $U \subset  A_{j} \in \mathcal{A}$, otherwise we denote it by $U\nprec \mathcal{A}$.
Define
\[n_{f, \mathcal{A}}(U)=\left\{
		  \begin{array}{lll}
		  0,  &\quad \mbox{if} ~U\nprec \mathcal{A};\\
          \infty,  &\quad \mbox{if}~ f^{k}(U)\prec \mathcal{A}, ~\forall ~k\in \mathbb{N} ;\\
		  l, &\quad \mbox{if}~f^{k}(U)\prec \mathcal{A}, ~\forall ~k\in \{0, 1, ..., l-1\}~and ~f^{l}(U)\nprec \mathcal{A}.
		  \end{array}\right.\]
Given a subset $Y\subset X$, let $\mathcal{U}=\{U_1, U_2, \cdots \}$ be an open cover of $Y$. For any $d\geq 0$, set
\begin{displaymath}
m(\mathcal{A}, d, \mathcal{U})\triangleq \sum_{U_i\in\mathcal{U}}e^{-d\cdot n_{f, \mathcal{A}}(U_i)}\\
\end{displaymath}
and
\begin{eqnarray*}
\begin{aligned}
m_{\mathcal{A}, ~d}(Y)\triangleq\lim_{\rho\rightarrow 0}\inf&\Big\{m(\mathcal{A}, d, \mathcal{U}): \mathcal{U}=\{U_1, U_2, \cdots \},\\
&Y\subset \bigcup _{U_i\in \mathcal{U}} U_i, ~e^{-n_{f, \mathcal{A}}(U_i)}<\rho
\text{ for each }U_i\in \mathcal{U}\Big\}.
\end{aligned}
\end{eqnarray*}
Then define$$h_{\mathcal{A}}(f, Y)= \inf\{d:m_{\mathcal{A}, ~d}(Y)=0\}=\sup\{d:m_{\mathcal{A}, ~d}(Y)=\infty\}.$$
The following quantity
$$h(f, Y)= \sup_{\mathcal{A}}h_{\mathcal{A}}(f, Y)$$
is called the topological entropy of $f$ on the subset $Y$.
\end{Definition}

The topological entropy has the following basic properties:
\begin{enumerate}
\item [(i)]  If $f:X\rightarrow X$ is semi-conjugate to $g:Y\rightarrow Y$ , i.e. there exists a continuous surjective $\pi:X\rightarrow Y$ such that $g\circ \pi=\pi \circ f$, then for any subset $A\subset X$ we have $h(g, \pi(A))\leq h(f, A)$.
\item [(ii)] ~For any subset $A\subset X$, we have~$h(f, f(A))=h(f, A)$.
\item [(iii)]~Countable stability: ~$h(f, \bigcup _{i=1}^{\infty}A_{i})=\sup_{i}h(f, A_{i})$.
\item [(iv)]~For any~$m\in \mathbb{N}$, any ~$A\subset X$, we have ~$h(f^{m}, A)=m~h(f, A)$.
\item [(v)]~Monotonicity: if~$A\subset B\subset X$, then~$h(f, A)\leq h(f, B)$.
\item [(vi)]~Variational principle: $h(f)=\sup_{\mu\in\mathcal{M}_{erg}(f|_X)}h_{\mu}(f)$, where~$\mathcal{M}_{erg}(f|_X)$ denotes the set of all the ergodic measures for ~$f$. Here $h(f)$ is $h(f, X)$ in shorthand.
\end{enumerate}

Let $f: X\rightarrow  X$ be a continuous transformation on a compact metric space $(X,d)$. A subset $F\subset X$ is called  an \emph{$(n,\varepsilon)$-separated set with respect to $f$} if for any $x,y\in F,x\neq y$, we have $d_{n}(x,y)\triangleq\max_{0\leq k\leq n-1} d(f^{k}(x),f^{k}(y))>\varepsilon$. And a sequence of continuous functions $\varPhi=\{\varphi_{n}\}_{n\geq 1}$ are called {\it sub-additive} if
\begin{displaymath}
\varphi_{n+m}\leq \varphi_{n}+\varphi_{m}\circ f^{n}, \forall n,m \in \mathbb {N}.
\end{displaymath}
Furthermore, a sequence of continuous functions $\varPhi=\{\varphi_{n}\}_{n\geq 1}$ is called \emph{ super-additive }
if $-\varPhi=\{-\varphi_{n}\}_{n\geq 1}$ is sub-additive.

\begin{Definition}
Let $Z$ be a subset of $X$ and $\varPhi=\{\varphi_{n}\}_{n\geq 1}$ a sequence of sub-additive/supper-additive potentials  on $X$, put
\begin{displaymath}
P_{n}(Z,f,\varPhi,\varepsilon)=\sup\{\sum_{x\in F} e^{\varphi_{n}(x)}|F\subset Z~ \text{ is an } (n,\varepsilon)\text{-separated set}\}.
\end{displaymath}
The upper sub-additive/supper-additive topological pressure of $\varPhi=\{\varphi_{n}\}_{n\geq 1}$ with respect to $f$ on the set $Z$  is defined as
\begin{equation}\label{upper pressure}
\overline{P}_{Z}(f,\varPhi)=\lim_{\varepsilon\rightarrow 0}\limsup_{n\rightarrow \infty}\frac{1}{n}\log P_{n}(Z,f,\varPhi,\varepsilon).
\end{equation}
\end{Definition}

\begin{Rem}
Consider \emph{liminf} instead of \emph{limsup} in \emph{(\ref{upper pressure})}, we get a quantity $\underline{P}_{Z}(f,\varPhi)$
 which is called \emph{lower sub-additive/super-additive topological pressure of $\varPhi=\{\varphi_{n}\}_{n\geq 1}$  with
 respect to $f$ on $Z$}. For any compact invariant set $Z$, we have \emph{$\underline{P}_Z(f, \varPhi)=\overline{P}_Z(f, \varPhi)$}. The common value is denoted by $P_Z(f, \varPhi)$, which is called the \emph{sub-additive/\\
 super-additive topological pressure of $\varPhi$ with respect to $f$ on $Z$}. See \cite{Barreira, Pesin} for proofs.
\end{Rem}

\begin{Definition}
%Suppose $X$ is a compact metric space and $f: X \rightarrow X$ is a continuous map.
For any given potential  $\varphi: X\rightarrow \mathbb{R}$, subset  $Z\subset X$, $\delta >0$ and  $N\in \mathbb{N}$, denote  $\mathcal{P}(Z, N, \delta)$ the collection of countably many sets  $\Big\{(x_{i}, n_{i})\subset Z\times \{N, N+1, ...\}\Big\}$ such that
$Z\subset \bigcup_{i}B_{n_{i}}(x_{i}, \delta)$,
where
$$ B_{n_{i}}(x_{i}, \delta)=\{y\in X: d(f^{j}(x_{i}), f^{j}(y))<\delta, \ j=0, 1, ..., n_{i}-1\}.$$
Given $ s\in \mathbb{R}$, define
\begin{eqnarray*}
\begin{aligned}
m_{P}(Z, s, \varphi, N, \delta)&=\inf_{\mathcal{P}(Z, N, \delta)}\sum_{(x_{i}, n_{i})}\exp(-n_{i}s+S_{n_{i}}\varphi(x_{i})),\\
m_{P}(Z, s, \varphi, \delta)&=\lim_{N\rightarrow \infty}m_{P}(Z, s, \varphi, N, \delta).
\end{aligned}
\end{eqnarray*}
Notice that $m_{P}(Z, s, \varphi, \delta)$ is non-increasing in ~$s$, and takes values $\infty$ and $0$ at all but at most one value of $s$. Denote the critical value of $s$ by
\begin{align*}
P_{Z}(\varphi, \delta)&=\inf\{s\in \mathbb{R}|m_{P}(Z, s, \varphi, \delta)=0\}\\
&=\sup\{s\in \mathbb{R}|m_{P}(Z, s, \varphi, \delta)=\infty\}.
\end{align*}
Therefore, for $s<P_{Z}(\varphi, \delta)$, we have $m_{P}(Z, s, \varphi, \delta)=\infty$. And for $s>P_{Z}(\varphi, \delta)$, we have $m_{P}(Z, s, \varphi, \delta)=0$.
The topological pressure of the potential ~$\varphi$ with respect to $f$ on a set $Z$ is defined as
$$P_{Z}(f, \varphi)=\lim_{\delta \rightarrow 0}P_{Z}(\varphi, \delta).$$
\end{Definition}

\begin{Rem}
If $\phi = 0,$ then $P_{Z}(f, 0)$ is the topological entropy of $f$ on $Z$, and we denote it by $h(f, Z)$. Notice that this definition is equivalent to that in Definition \ref{deentropy}. For readers who are not familiar with this, one can refer to \cite{PP}.
\end{Rem}

\begin{Rem}
If $\varPhi=\{\varphi_n\}_{n\geq1}$ is \emph{additive} in the sense that $\varphi_n(x)=\varphi(x)+\varphi(f(x))+\cdots+\varphi(f^{n-1}(x))$ for some continuous function $\varphi: X\to \mathbb{R}$, we simply denote the topological pressures $\underline{P}_Z(f, \varPhi)$ and $\overline{P}_Z(f, \varPhi)$ as $\underline{P}_Z(f, \varphi)$ and $\overline{P}_Z(f, \varphi)$ respectively. For any compact invariant set $Z\subset X$ we have $P_Z(f, \varphi)=\underline{P}_Z(f, \varphi)=\overline{P}_Z(f, \varphi)$. See \cite{Pesin} for proofs.
\end{Rem}

Let $\mathcal{M}(X)$ be the space of all Borel probability measures on $X$ endowed with the $weak^{\ast}$ topology. Let $\mathcal{M}_{inv}(f|_X)$ denote the subspace of $\mathcal{M}(X)$ consisting of all $f-$invariant measures.
For $\mu\in\mathcal{M}_{inv}(f|_X)$, let $h_\mu(f)$ denote the entropy of $f$ with
respect to $\mu$.
The authors in \cite{CFH} proved the following variational principle.

\begin{Thm}
Let $f:X\rightarrow X$ be a continuous transformation on a compact metric space $X$ and $\varPhi=\{\varphi_{n}\}_{n\geq 1}$ be a sequence of sub-additive potential on $X$. We have
\begin{displaymath}
P_{X}(f,\varPhi)=\sup\{h_{\mu}(f)+\varPhi_{\ast}(\mu):\mu\in\mathcal{M}_{inv}(f|_X),~\varPhi_{\ast}(\mu)\neq-\infty\},
\end{displaymath}
where $\varPhi_*(\mu)\triangleq\displaystyle\lim_{n\to\infty}\frac{1}{n}\int\varphi_n d\mu$. (The existence of the limit follows from a sub-additive argument.)
\end{Thm}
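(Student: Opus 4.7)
The plan is to prove the two inequalities separately; the direction $P_X(f,\varPhi)\ge h_\mu(f)+\varPhi_*(\mu)$ for each invariant $\mu$ is routine, while the reverse is the substantive content. For the lower bound, I first reduce via the ergodic decomposition and affinity of $h_\mu$ and $\varPhi_*$ in $\mu$ to the ergodic case. Kingman's sub-additive ergodic theorem then gives $\tfrac{1}{n}\varphi_n(x)\to\varPhi_*(\mu)$ for $\mu$-a.e.\ $x$, while the Brin--Katok local entropy formula gives $\lim_{\epsilon\to 0}\liminf_n -\tfrac{1}{n}\log\mu(B_n(x,\epsilon))=h_\mu(f)$ for $\mu$-a.e.\ $x$. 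Fixing $\eta>0$, I select a small $\epsilon_0>0$ and a measurable set $A$ with $\mu(A)\ge \tfrac{1}{2}$ on which both convergences are uniform for $n\ge N$. A maximal $(n,\epsilon_0)$-separated subset $F_n\subset A$ then has $|F_n|\ge \tfrac{1}{2}e^{n(h_\mu(f)-\eta)}$ by a standard ball-packing argument, and each $x\in F_n$ obeys $\varphi_n(x)\ge n(\varPhi_*(\mu)-\eta)$; summing yields $P_n(X,f,\varPhi,\epsilon_0)\ge \tfrac{1}{2}e^{n(h_\mu(f)+\varPhi_*(\mu)-2\eta)}$, and letting $n\to\infty$, $\epsilon_0\to 0$, and $\eta\to 0$ finishes this direction.

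For the reverse inequality, fix $\epsilon>0$ and for each $n$ pick a maximal $(n,\epsilon)$-separated set $E_n$ approximately realizing $P_n(X,f,\varPhi,\epsilon)$. Writing $Z_n:=\sum_{x\in E_n}e^{\varphi_n(x)}$, form the weighted probability measure and its Birkhoff average
\[
\sigma_n:=Z_n^{-1}\sum_{x\in E_n}e^{\varphi_n(x)}\delta_x,\qquad \mu_n:=\frac{1}{n}\sum_{k=0}^{n-1}f_*^k\sigma_n,
\]
and extract a weak-$*$ subsequential limit $\mu_{n_j}\to\mu\in\mathcal{M}_{inv}(f|_X)$. Choose a finite measurable partition $\xi$ with $\operatorname{diam}\xi<\epsilon$ and $\mu(\partial\xi)=0$; since $E_n$ is $(n,\epsilon)$-separated, each atom of $\xi_n:=\bigvee_{i=0}^{n-1}f^{-i}\xi$ meets $E_n$ in at most one point, so the entropy identity for a discrete distribution gives $\log Z_n=H_{\sigma_n}(\xi_n)+\int\varphi_n\,d\sigma_n$. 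The entropy term is handled by the classical Walters cutting trick: for $q\ll n$, shifting the block decomposition of $\{0,\ldots,n-1\}$ in all $q$ ways, averaging, and invoking concavity of entropy yields $\tfrac{1}{n}H_{\sigma_n}(\xi_n)\le \tfrac{1}{q}H_{\mu_n}(\xi_q)+O(q\log|\xi|/n)$; passing $n_j\to\infty$ (using $\mu(\partial\xi)=0$), then $q\to\infty$, then $\operatorname{diam}\xi\to 0$ produces the bound $h_\mu(f)$.

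The genuine difficulty is controlling the potential term $\tfrac{1}{n}\int\varphi_n\,d\sigma_n$, since $\varphi_n$ itself varies with $n$ and weak convergence of $\sigma_n$ alone is insufficient. The device is to fix $m\in\mathbb{N}$ and apply iterated sub-additivity for every starting shift $s\in\{0,\ldots,m-1\}$: writing $n-s=a_sm+r_s$, one has $\varphi_n\le \varphi_s+\sum_{j=0}^{a_s-1}\varphi_m\circ f^{s+jm}+\varphi_{r_s}\circ f^{s+a_sm}$; summing over $s$ and noting that each index $k\in\{0,\ldots,n-1\}$ appears at most once in $\{s+jm:0\le s<m,\ 0\le j<a_s\}$ yields the pointwise bound $m\,\varphi_n\le \sum_{k=0}^{n-1}\varphi_m\circ f^k+O(m^2\|\varphi_m\|_\infty)$, hence
\[
\frac{1}{n}\int\varphi_n\,d\sigma_n\le \frac{1}{m}\int\varphi_m\,d\mu_n+O_m(1/n).
\]
Since $\varphi_m$ is continuous, weak convergence along $n_j$ gives $\limsup_j\tfrac{1}{n_j}\int\varphi_{n_j}\,d\sigma_{n_j}\le \tfrac{1}{m}\int\varphi_m\,d\mu$, and the Fekete identity $\varPhi_*(\mu)=\inf_m\tfrac{1}{m}\int\varphi_m\,d\mu$ (valid for invariant $\mu$) drives the right-hand side down to $\varPhi_*(\mu)$ as $m\to\infty$. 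The main obstacle is the coordinated bookkeeping across the iterated limits $n_j\to\infty$, $q\to\infty$, $m\to\infty$, and $\operatorname{diam}\xi\to 0$ so that no error term diverges, compounded by the mild tension between $\operatorname{diam}\xi<\epsilon$ (needed for the separation argument) and $\operatorname{diam}\xi\to 0$ (needed to saturate $h_\mu(f)$); navigating this delicate interplay is the technical core of the Cao--Feng--Huang argument.
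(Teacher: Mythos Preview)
The paper does not prove this theorem itself; it simply quotes the result from Cao--Feng--Huang [CFH]. Your sketch is a correct outline of their argument: Brin--Katok plus Kingman for the easy direction, and for the hard direction the Misiurewicz--Walters construction of a limit measure together with the averaging-over-shifts trick to pass from $\tfrac{1}{n}\int\varphi_n\,d\sigma_n$ to $\tfrac{1}{m}\int\varphi_m\,d\mu$ and then $\varPhi_*(\mu)=\inf_m\tfrac{1}{m}\int\varphi_m\,d\mu$.

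One minor correction: the ``mild tension'' you flag at the end is not actually there. In the upper bound you do not need $\operatorname{diam}\xi\to 0$ to saturate $h_\mu(f)$; the cutting trick already gives
\[
\limsup_{j}\frac{1}{n_j}H_{\sigma_{n_j}}(\xi_{n_j})\le \frac{1}{q}H_\mu(\xi_q)\xrightarrow[q\to\infty]{} h_\mu(f,\xi)\le h_\mu(f)
\]
for the single fixed partition $\xi$ with $\operatorname{diam}\xi<\epsilon$ and $\mu(\partial\xi)=0$. Thus for each fixed $\epsilon$ you obtain $\limsup_n\tfrac{1}{n}\log P_n(X,f,\varPhi,\epsilon)\le h_{\mu_\epsilon}(f)+\varPhi_*(\mu_\epsilon)\le\sup\{\cdots\}$, with $\mu_\epsilon$ allowed to depend on $\epsilon$, and only afterwards do you send $\epsilon\to 0$. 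So the order of limits is $n_j\to\infty$, then $q\to\infty$ and $m\to\infty$ (these two are independent), then $\epsilon\to 0$; no refinement of $\xi$ beyond $\operatorname{diam}\xi<\epsilon$ is required.
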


\subsection{Conformal measures and average conformal dynamical systems}

We review the ~Oseledec's Theorem which contains the definition of the Lyapunov exponents. Particularly, we give the definitions of conformal measure and average conformal dynamical systems (See Definition \ref{defofac}).

\begin{Thm} \cite{Oseledets} Suppose $f$ is  a  $C^{1}$ map on a $d$-dimensional compact Riemannian manifold $M$ and $\mu$ is an  $f$-invariant Borel probability measure, then there exists an $f$-invariant set  $R$  with $\mu(R)=1$ such that for each $x\in R$,
\begin{enumerate}
\item [(i)]  There exists a measurable filtration
\begin{displaymath}
\{0\}=E_{0}(x)\subset E_{1}(x)\subset...\subset E_{\ell(x)}(x)=T_{x}M,
\end{displaymath}
where $\{E_{i}(x)\}_{1\leq i\leq \ell(x)}$ are linear subspaces of $T_{x}M$ and $0<\ell(x)\leq d$ such that
\begin{displaymath}
 \quad D_{x}f E_{i}(x)=E_{i}(f(x)).
\end{displaymath}
\item [(ii)] There exist measurable functions $\lambda_{1}(x)<\lambda_{2}(x)<...<\lambda_{\ell(x)}(x)$, such that for any $v\in E_{i}(x)\setminus E_{i-1}(x), 1\leq i\leq \ell(x)$, we have~$$\lambda_{i}(x)=\lim_{n\rightarrow \infty}\frac{1}{n}\log\|D_{x}f^{n}(v)\|.$$
\end{enumerate}

Furthermore, if $f$ is a $C^{1}$ diffeomorphism, then  there exists an $f$-invariant set $Y$  with $\mu(Y)=1$ such that for each $x\in Y$,
\begin{enumerate}
\item [(i)] There exists an $Df$-invariant splitting of the tangent space
\begin{displaymath}
T_{x}M=E_{1}(x)\oplus E_{2}(x)\oplus...\oplus E_{\ell(x)}(x).
\end{displaymath}
\item [(ii)] There exist measurable functions $\lambda_{1}(x)<\lambda_{2}(x)<...<\lambda_{\ell(x)}(x)$, such that for any ~$v\in E_{i}(x), ~v\neq 0$, we have~$$\lambda_{i}(x)=\lim_{n\rightarrow \infty}\frac{1}{n}\log\|D_{x}f^{n}(v)\|.$$
\end{enumerate}
\end{Thm}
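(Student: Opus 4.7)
The plan is to derive this from Kingman's subadditive ergodic theorem applied to the matrix cocycles generated by $Df$. For each $k \in \{1, \ldots, d\}$, set $\varphi_n^{(k)}(x) = \log \|\Lambda^k D_x f^n\|$; by multiplicativity of exterior powers and the chain rule, $\{\varphi_n^{(k)}\}_{n\ge 1}$ is subadditive, and compactness of $M$ gives integrability of $\log^+\|Df\|$. Kingman's theorem then produces an $f$-invariant measurable limit $\chi_k(x) = \lim_n \varphi_n^{(k)}(x)/n$ on a set of full $\mu$-measure. A standard linear-algebra computation identifies the logarithmic growth rates of the singular values of $D_xf^n$ as $\chi_1(x),\ \chi_2(x)-\chi_1(x),\ \ldots,\ \chi_d(x)-\chi_{d-1}(x)$; collecting equal values yields the distinct exponents $\lambda_1(x) < \cdots < \lambda_{\ell(x)}(x)$ with their multiplicities.

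Next I would build the filtration by setting
\[
E_i(x) \;=\; \{v \in T_xM : \limsup_{n\to\infty} \tfrac{1}{n}\log\|D_xf^n v\| \le \lambda_i(x)\} \cup \{0\}.
\]
Linearity of $E_i(x)$ is immediate from the triangle inequality; its dimension equals the total multiplicity of exponents $\le \lambda_i(x)$, obtained by comparing $v$ against a singular-vector basis of $D_xf^n$; and the invariance $D_xf(E_i(x)) = E_i(f(x))$ is built into the definition. The one delicate point is upgrading the $\limsup$ to a genuine limit for $v \in E_i(x)\setminus E_{i-1}(x)$, which is handled by showing that the angle between $v$ and the span of singular directions with exponent $\le \lambda_i(x)$ does not decay faster than any exponential.

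For the diffeomorphism case, I would run the same construction for $f^{-1}$, producing a dual filtration $\{\widetilde E_i(x)\}_i$ from the negative-time cocycle. The $Df$-invariant splitting is then
\[
E_i(x) \;=\; \bigl\{v \in T_xM : \lim_n \tfrac{1}{n}\log\|D_xf^n v\| = \lambda_i(x),\ \lim_n \tfrac{1}{n}\log\|D_xf^{-n} v\| = -\lambda_i(x)\bigr\} \cup \{0\},
\]
and the direct-sum decomposition $T_xM = \bigoplus_i E_i(x)$ is recovered by a dimension count. \textbf{The main obstacle} is the transversality of the forward and backward filtrations: one must rule out, on a full-measure set, that a forward subspace with exponent $\lambda_i$ meets a backward subspace with a strictly different exponent in a nonzero vector. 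This is the classical angle-recurrence argument of Oseledec (streamlined by Raghunathan and Ruelle), combining Poincar\'e recurrence with the incompatibility of distinct exponential growth rates to rule out such tangencies. Once transversality is in hand, measurability of $\ell(\cdot)$, $\lambda_i(\cdot)$, and of the Grassmannian-valued maps $x \mapsto E_i(x)$ follows from the explicit pointwise formulas as limits of continuous functions, and $f$-invariance is automatic from the chain rule together with Birkhoff/Kingman invariance of the $\chi_k$.
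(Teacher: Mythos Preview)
The paper does not prove this statement at all: it is quoted verbatim as Oseledec's Multiplicative Ergodic Theorem with a citation to \cite{Oseledets}, and the authors use it only as background to define Lyapunov exponents and the notions of expanding/hyperbolic/conformal measures. There is therefore nothing to compare your argument against.

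That said, your sketch follows the standard Raghunathan--Ruelle route (Kingman on exterior powers, filtration from the $\limsup$, transversality of forward and backward filtrations in the invertible case), and the outline is correct. Two small remarks: the ``$\cup\{0\}$'' in your definition of $E_i(x)$ is redundant, since the zero vector already satisfies the $\limsup$ condition trivially; and in the invertible case it is cleaner to obtain the Oseledec subspaces as intersections $E_i(x) = F_i(x)\cap \widetilde F_{\ell-i+1}(x)$ of the forward and backward filtrations rather than by the two-sided limit description you wrote, since the latter presupposes that the limits exist, which is exactly what the transversality argument is meant to establish.
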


If $\mu$ is an ergodic measure for $f$, then $\lambda_{i}(x)$ and $\ell(x)$ are independent of the choice of the point $x$. We denote them by $\lambda_{1}(\mu), \lambda_{2}(\mu), ...,  \lambda_{\ell}(\mu)$ and we call these numbers the {\it Lyapunov exponents} of $f$ with respect to the measure $\mu$. $\mu$ is said to be \emph{expanding} if $0<\lambda_1(\mu)<\lambda_2(\mu)<\cdots<\lambda_\ell(\mu)$. We say $\mu$ is \emph{hyperbolic} if $\lambda_1(\mu)<\cdots<\lambda_k(\mu)<0<\lambda_{k+1}(\mu)<\cdots<\lambda_\ell(\mu)$ for some $k$, $1\leq k<\ell$.

%We say an ergodic measure  $\mu$ is {\it hyperbolic} if there are only non-zero Lyapunov exponents with regard to the measure $\mu$.

\begin{Definition}\label{defofac}
\begin{enumerate}
\item Let $f: M \to M$ be a $C^1$ map of a compact Riemannian manifold $M$ and $\mu$ be an expanding $f$-invariant ergodic Borel probability measure on $M$. $\mu$ is called conformal if   all the Lyapunov exponents with respect to ~$\mu$ are equal, that is $\lambda_1(\mu)=\lambda_2(\mu)=\cdots=\lambda_\ell(\mu)>0$. Then we say $f$ is an average conformal expanding map on $M$ if every $f$-invariant ergodic measure is expanding and conformal.

\item Let $f: M \to M$ be a $C^1$ diffeomorphism of a compact Riemannian manifold $M$ and $\mu$ be a hyperbolic $f$-invariant ergodic Borel probability measure on $M$. $\mu$ is called conformal if all the positive Lyapunov exponents of ~$\mu$ are equal and the negative ones are equal respectively, that is $\lambda_1(\mu)=\cdots=\lambda_k(\mu)<0$ and $\lambda_{k+1}(\mu)=\cdots=\lambda_\ell(\mu)>0$. Then we say $f$ is an average conformal hyperbolic diffeomorphism on $M$ if every $f$-invariant ergodic measure is hyperbolic and conformal.
\end{enumerate}
\end{Definition}

\begin{Rem}
	By the result in~\cite{C2}, a map with every invariant ergodic measure expanding is actually uniformly expanding. However, a diffeomorphism with every invariant ergodic measure hyperbolic is not necessarily uniformly hyperbolic. The authors in ~\cite{CLR} constructed such an example.
\end{Rem}

\subsection{$(\chi, \varepsilon)$ repellers}
%Now we recall the definition of the repellers and horseshoes and give some more detailed descriptions.

Let $M$ be a compact Riemannian manifold and $f:M\to M$ be a $C^1$ map.
For each $x\in M$, the following quantities
$$ \|D_xf\|=\sup_{0\neq u\in T_xM}\frac{\|D_xf(u)\|}{\|u\|}
\text{ and } m(D_xf)=\inf_{0\neq u\in T_xM}\frac{\|D_xf(u)\|}{\|u\|}
$$
are respectively called \emph{the maximal norm and minimum norm of the differentiable operator $D_xf: T_xM \to T_{fx}M$}, where $\|\cdot\|$ is the norm induced by the Riemannian metric on $M$.
We say that $W\subset M$ is \emph{forward invariant} if $f(W)\subset W$. A compact subset $W\subset M$ is said to be \emph{isolated} if there is a neighborhood $U$ of $W$ such that $W=\bigcap_{n\geq 0} f^{n}(U)$. Given a $f$-forward invariant subset $W\subset M$, we call $f|_W$ \emph{expanding} if there exists  $n \in \mathbb{N}$ such that for any $x\in W$ and any unit vector $v \in T_{x}M$, we have $\|D_{x}f^{n}(v)\|>1$. We say a compact $f$-forward invariant set $W$ is a \emph{repeller} if $W$ is isolated and expanding.

\begin{Definition}\label{defofacr}
A repeller $W$ is called average conformal if $f|_{W}$ is average conformal.
\end{Definition}

\begin{Definition} Given $\chi>0$ and  $\varepsilon \in(0, \chi)$, we say an average conformal repeller $W$ is a $(\chi,  \varepsilon)$ repeller  if for any $x \in W$ we have
\begin{displaymath}
\limsup_{n\rightarrow \infty}|\frac{1}{n}\log\|D_{x}f^{n}\|-\chi| < \varepsilon, \quad\quad\limsup_{n\rightarrow \infty}|\frac{1}{n}\log m(D_{x}f^{n})-\chi| < \varepsilon.
\end{displaymath}
\end{Definition}

\subsection{$(\chi^s, \chi^u, \varepsilon)$ horseshoes}

Let $M$ be a compact Riemannian manifold and $f:M\to M$ be a $C^1$ diffeomorphism.
We say that $W\subset M$ is \emph{$f$-invariant} if $f(W)=W$. A compact subset $W\subset M$ is said to be \emph{locally maximal (or isolated)} if there is a neighborhood $U$ of $W$ such that $W=\bigcap_{n\in\mathbb{Z}} f^{n}(U)$. $f$ is called \emph{topologically transitive} on $W$ if for any
two nonempty (relative) open sets $U_1$ and $U_2$ in $W$, we have $f^n(U_1)\cap U_2\neq\emptyset$ for some $n>0$.

\begin{Definition}
Let $W \subset M$ be a compact invariant set, we say that $W$ is a hyperbolic set if (up to a change of metric) there exists a $Df$-invariant splitting $T_{W}M = E^{s} \oplus E^{u}$ and number $\lambda>1$ such that for every $x \in W$ we have $$ \| D_xf|_{E^{s}_{x}} \|  \leq \lambda^{-1} < 1 < \lambda\leq \| D_xf|_{E^{u}_{x}} \|.$$
Here $\lambda^{-1}$ is called the skewness of the hyperbolicity.
\end{Definition}

\begin{Definition}
A hyperbolic set $W$ is called average conformal if it has two unique Lyapunov exponents, one negative and one positive. That is, for any invariant
ergodic measure $\mu$ on $W$, the Lyapunov exponents are $\lambda_1(\mu)=\cdots=\lambda_k(\mu)<0$ and $\lambda_{k+1}(\mu)=\cdots=\lambda_l(\mu)>0$ for some $0<k<l$.
\end{Definition}

A  set $W\subset M$ is called a {\it basic} set  if it is a compact, $f$-invariant, locally maximal hyperbolic set and $f|_{W}$ is topologically transitive. One can refer to  \cite{KH} for more details.

\begin{Definition}  Given constants $\chi^{s} < 0 < \chi^{u}$ and $\varepsilon \in (0, \min\{|\chi^{s}|,  \chi^{u}\})$, we say a basic set $W \subset M$ is a $(\chi^{s},  \chi^{u}, \varepsilon)$ horseshoe if for any $x\in W$ we have
\begin{displaymath}
\limsup_{|n|\rightarrow \infty}|\frac{1}{n}\log\|D_xf^{n}|_{E^{s}_{x}}\|-\chi^{s}|<\varepsilon,  \quad\quad     \limsup_{|n|\rightarrow \infty}|\frac{1}{n}\log\|D_xf^{n}|_{E^{u}_{x}}\|-\chi^{u}|<\varepsilon,
\end{displaymath}
\begin{displaymath}
\limsup_{|n|\rightarrow \infty}|\frac{1}{n}\log m(D_xf^{n}|_{E^{s}_{x}})-\chi^{s}|<\varepsilon,  \quad\quad     \limsup_{|n|\rightarrow \infty}|\frac{1}{n}\log m(D_xf^{n}|_{E^{u}_{x}})-\chi^{u}|<\varepsilon.
\end{displaymath}
\end{Definition}

Given an $f$-invariant ergodic measure $\mu$, which is hyperbolic and conformal, we denote the corresponding Lyapunov exponents by $\chi^s(\mu)$ and $\chi^u(\mu)$. Given $\varepsilon\in(0, \min\{|\chi^s(\mu)|, \chi^u(\mu)\})$,
we say a basic set $W \subset M$ is a \emph{$(\mu,  \varepsilon)$ horseshoe} if it is a $(\chi^{s}(\mu), ~\chi^{u}(\mu),  \varepsilon)$ horseshoe, which lies in the  $\varepsilon$ neighborhood of the support set of the measure $\mu$  and satisfies
$$|h(f, W)-h_\mu(f)|<\varepsilon.$$

\section{Proofs of Theorem \ref{main1}, Corollary \ref{dd} and Corollary \ref{urbanski}}

\subsection{Preparations}

In this section, we establish some technical results for Theorem \ref{main1}.

\subsubsection{Katok's theorem for $C^{1+\alpha}$ nonuniform expanding maps}
%First we state a theorem from Katok~\cite{Katok}. This result enables us to approximate a hyperbolic ergodic measure with positive entropy by repellers in the sense of the Lyapunov exponents and the entropy.
The following result is Katok's theorem for $C^{1+\alpha}$ nonuniform expanding maps. One can refer to \cite{PU} and \cite{QC} for proofs. Here we assume that $f$ is average conformal and hence the repeller we find is an average conformal one.

\begin{Thm}\label{Katok'repeller}
Let $f: M\rightarrow M$ be a $C^{1+\alpha}$ map of a compact Riemannian manifold $M$, and $\mu$ be an ergodic $f$-invariant Borel probability measure with positive entropy. Suppose that $f$ is average conformal expanding on $M$. Denote the Lyapunov exponent of $\mu$ by $\chi(\mu)>0$. Then for any $\varepsilon>0$, there exists a compact subset $W_{\varepsilon}\subset M$ satisfying
\begin{itemize}
\item [(a)] $W_{\varepsilon}$ is a $(\chi(\mu),\varepsilon)$ average conformal repeller. Moreover, there is a compact subset $R_{\varepsilon}\subset W_{\varepsilon}\subset M$ and a positive integer $N$ such that $f^{N}(R_{\varepsilon})=R_{\varepsilon}$. Here $f^{N}|_{R_{\varepsilon}}$ is expanding and topologically conjugate to a topologically mixing subshift of finite type and $W_{\varepsilon}=R_{\varepsilon}\cup f(R_\varepsilon)\cup\cdots\cup f^{N-1}(R_\varepsilon)$.
\item [(b)] $|h_{\mu}(f)-h(f|_{W_{\varepsilon}})|<\varepsilon$.
\item [(c)] $|\chi(\mu)-\chi(\nu)|<\varepsilon$ for every $\nu\in\mathcal{M}_{inv}(f|_{W_\varepsilon})$.
\end{itemize}
\end{Thm}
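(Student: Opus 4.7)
The plan is to combine the $C^{1+\alpha}$ nonuniformly expanding Katok-type construction developed in \cite{PU} and \cite{QC} with the rigidity imposed by the average conformal hypothesis. Starting from an ergodic $\mu$ with $h_\mu(f) > 0$, I would first apply the standard Katok approximation: choose a Pesin regular block $\Lambda_\ell$ where the Lyapunov norms are uniformly controlled within $\varepsilon$ of $\chi(\mu)$, select a long return time $N$, and build a compact $f^N$-invariant subset $R_\varepsilon \subset \Lambda_\ell$ that is uniformly expanding and topologically conjugate to a topologically mixing subshift of finite type, with $|h(f^N|_{R_\varepsilon}) - N h_\mu(f)| < N\varepsilon$. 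Setting $W_\varepsilon = \bigcup_{i=0}^{N-1} f^i(R_\varepsilon)$ produces a compact $f$-invariant set, and dividing the entropy of $f^N|_{R_\varepsilon}$ by $N$ gives item (b).

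Next, for the average conformal repeller part of item (a), I would observe that since $f$ is average conformal expanding on $M$, so is its restriction to $W_\varepsilon$: every ergodic $\nu \in \mathcal{M}_{inv}(f|_{W_\varepsilon})$ is also $f$-invariant on $M$ and thus has all Lyapunov exponents equal to a common value $\chi(\nu)$. By the characterization in \cite{BCH} (cf.\ the remark after Definition \ref{defofac}), $f|_{W_\varepsilon}$ is then uniformly expanding, and the sequences $\frac{1}{n}\log\|D_xf^n\|$ and $\frac{1}{n}\log m(D_xf^n)$ both converge uniformly on $W_\varepsilon$ to the single Lyapunov exponent of any ergodic measure on $W_\varepsilon$. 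Consequently, the $(\chi(\mu),\varepsilon)$ repeller property follows once item (c) is established. Isolation of $W_\varepsilon$ is inherited from the standard Markov partition structure of the subshift conjugacy.

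For item (c), the Katok block $\Lambda_\ell$ is chosen so that every Birkhoff average of $\log\|Df\|$ and $\log m(Df)$ along returns to $\Lambda_\ell$ is within $\varepsilon$ of $\chi(\mu)$. Because $R_\varepsilon$ is built from dynamically defined rectangles inside $\Lambda_\ell$, every $\nu \in \mathcal{M}_{inv}(f^N|_{R_\varepsilon})$ satisfies $|\chi(\nu) - \chi(\mu)| < \varepsilon$; averaging over the $N$ iterates transfers this bound to every $\nu\in\mathcal{M}_{inv}(f|_{W_\varepsilon})$.

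The main obstacle is the uniform passage from the Pesin-block estimates, which are a priori only almost-everywhere statements, to pointwise control of both $\|D_x f^n\|$ and $m(D_x f^n)$ at \emph{every} $x\in W_\varepsilon$. Handling this requires the average conformal dichotomy of \cite{BCH}: since every ergodic measure on $W_\varepsilon$ has a unique Lyapunov exponent and $f|_{W_\varepsilon}$ is uniformly expanding, the maximal and minimal singular values of $D_x f^n$ share the same exponential growth rate up to subexponential corrections, uniformly in $x$. Without the average conformal hypothesis one could at best control the Lyapunov spectrum, not both extremal norms simultaneously, so this is the step where the hypothesis is essential; once it is in hand, items (a), (b), (c) fall out of the construction.
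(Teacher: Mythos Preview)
Your proposal is correct and aligns with the paper's treatment, which does not give a self-contained proof but simply refers to \cite{PU} and \cite{QC} for the Katok-type construction and observes that the average conformal hypothesis on $M$ is inherited by any invariant subset, so that the repeller $W_\varepsilon$ is automatically average conformal. Your sketch fills in exactly the details the paper omits, including the correct identification of where the average conformal hypothesis is essential (uniform control of both $\|D_xf^n\|$ and $m(D_xf^n)$); one minor point is that the uniform-expansion result you attribute to \cite{BCH} is cited in the paper as \cite{C2}.
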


\subsubsection{Dimensional estimates for subsets of average conformal repellers}
%The following proposition establishes the relation of the Hausdorff dimension and the entropy on a subset of a $(\chi, ~\varepsilon)$ average conformal repeller. Campos and Gelfert got the same result for the conformal case in ~\cite{CG1}. The proof we give here is different from theirs.

%\iffalse
%\begin{Lemma}\label{11}\cite{BCH}
%If $f$ is a $C^{1}$ map and $\Lambda$ is an average conformal repeller, then
%\begin{equation}
%\lim_{n\rightarrow \infty}\frac{1}{n}(\log \|Df^{n}_{x}\|-\log m(Df^{n}_{x}))=0
%\end{equation}
%uniformly for  $x\in \Lambda$.
%\end{Lemma}
%\fi

\begin{Proposition}\label{relation}
Let $M$ be a $d$-dimensional compact ~Riemannian manifold and $f: M\rightarrow M$ be a $C^{1}$ map. Given~$0<\varepsilon<\chi$ and a $(\chi, \varepsilon)$ average conformal repeller $W\subset M$, for any subset $ Z \subset W$,  one has
$$\frac{h(f|_{W}, Z)}{\chi+\varepsilon}\leq \text{dim}_{H}Z\leq \frac{h(f|_{W}, Z)}{\chi-\varepsilon}.$$
\end{Proposition}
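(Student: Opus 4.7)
The plan is to run the classical Bowen dimension--entropy comparison in the present asymptotically-conformal setting. The key geometric ingredient is a sandwich showing that Bowen balls $B_n(x,\delta)$ on a $(\chi,\varepsilon)$ average conformal repeller are comparable to Euclidean balls of radius $\delta e^{-n\chi}$, with an exponential slack $e^{\pm n\varepsilon}$ coming from the $(\chi,\varepsilon)$-condition. With such a sandwich in hand, one converts covers of $Z$ by Bowen balls into covers by Euclidean balls and vice versa, and the two inequalities of the proposition fall out from matching the two exponential rates.

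The first step I would carry out is to upgrade the pointwise $\limsup$ control in the definition of a $(\chi,\varepsilon)$ repeller to a uniform bound. The cocycles $\phi_n(x)=\log\|D_xf^n\|$ and $\psi_n(x)=-\log m(D_xf^n)$ are both sub-additive, and the $(\chi,\varepsilon)$-hypothesis forces their normalized limits to lie in $[\chi-\varepsilon,\chi+\varepsilon]$ pointwise on $W$. By the standard sub-additive convergence argument combined with compactness of $W$, for every $\eta>0$ there exists $N$ such that
$$e^{n(\chi-\varepsilon-\eta)}\leq m(D_xf^n)\leq \|D_xf^n\|\leq e^{n(\chi+\varepsilon+\eta)}\quad \text{for all } x\in W,\ n\geq N.$$
Using that $f|_W$ is uniformly expanding together with a $C^{1+\alpha}$ bounded-distortion estimate for the local inverse branches, this yields, for $\delta$ sufficiently small and a constant $C>0$,
$$B\bigl(x,\, C^{-1}\delta e^{-n(\chi+\varepsilon+\eta)}\bigr)\;\subset\; B_n(x,\delta)\;\subset\; B\bigl(x,\, C\delta e^{-n(\chi-\varepsilon-\eta)}\bigr),\quad x\in W,\ n\geq N.$$

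For the upper estimate $\dim_H Z\leq h(f|_W,Z)/(\chi-\varepsilon)$, fix $s>h(f|_W,Z)$. Passing from the open-cover description of $h(f|_W,Z)$ in Definition \ref{deentropy} to the equivalent Bowen-ball formulation (cf.\ \cite{PP}), one may cover $Z$ by Bowen balls $\{B_{n_i}(x_i,\delta)\}$ with $\max_i e^{-n_i}$ as small as we wish and $\sum_i e^{-sn_i}$ arbitrarily small. Replacing each $B_{n_i}(x_i,\delta)$ by its enveloping Euclidean ball of diameter at most $2C\delta e^{-n_i(\chi-\varepsilon-\eta)}$ produces a cover of $Z$ by sets of uniformly small diameter with
$$\sum_i \bigl|B_{n_i}(x_i,\delta)\bigr|^t \leq (2C\delta)^t\sum_i e^{-n_i t(\chi-\varepsilon-\eta)} \leq (2C\delta)^t\sum_i e^{-sn_i},$$
where $t=s/(\chi-\varepsilon-\eta)$. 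Thus $\mathcal{H}^t(Z)=0$ and $\dim_H Z\leq s/(\chi-\varepsilon-\eta)$; letting $s\downarrow h(f|_W,Z)$ and $\eta\downarrow 0$ gives the claim.

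For the lower estimate $h(f|_W,Z)/(\chi+\varepsilon)\leq \dim_H Z$, fix $t>\dim_H Z$ and, given any $\rho>0$, cover $Z$ by Euclidean balls $\{B(x_i,r_i)\}$ with $r_i<\rho$ and $\sum_i r_i^t$ arbitrarily small. For each $i$ choose the largest integer $n_i$ with $\delta e^{-n_i(\chi+\varepsilon+\eta)}\geq r_i$; the left inclusion in the sandwich then forces $B(x_i,r_i)\subset B_{n_i}(x_i,\delta)$, and $n_i\to\infty$ uniformly as $r_i\to 0$. Setting $s=t(\chi+\varepsilon+\eta)$ one has
$$\sum_i e^{-sn_i}\leq \delta^{-t}\sum_i r_i^t,$$
which is arbitrarily small. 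Translating this Bowen-ball cover back into an open cover in the sense of Definition \ref{deentropy} yields $h(f|_W,Z)\leq t(\chi+\varepsilon+\eta)$; letting $t\downarrow\dim_H Z$ and $\eta\downarrow 0$ completes the proof. The main obstacle I expect is the first step: squeezing the pointwise hypothesis $\limsup<\varepsilon$ into a fully uniform exponential bound at the cost of only an $\eta$-loss, since everything downstream is a standard Bowen-style cover manipulation once that sandwich is in place. A secondary bookkeeping issue is the equivalence between the open-cover definition of $h(f|_W,Z)$ used in the paper and the Bowen-ball version used above, but this is well-documented.
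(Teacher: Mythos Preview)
Your route differs genuinely from the paper's. The paper never establishes a Bowen-ball/Euclidean-ball sandwich and never upgrades the pointwise $\limsup$ hypothesis to a uniform one. Instead it works with the additive potential $\phi(x)=\log|\det D_xf|^{1/d}$ and stratifies $Z=\bigcup_K Z_K$, where $Z_K=\{x\in Z:|\tfrac1nS_n\phi(x)-\chi|<\varepsilon\text{ for all }n\geq K\}$. On each $Z_K$ the trivial pressure comparison $P_{Z_K}(0)-t(\chi+\varepsilon)\leq P_{Z_K}(-t\phi)\leq P_{Z_K}(0)-t(\chi-\varepsilon)$ holds, and the Bowen equation $P_{Z_K}(-\dim_HZ_K\cdot\phi)=0$ from Theorem~B of \cite{Y.Cao} immediately yields $\tfrac{h(f|_W,Z_K)}{\chi+\varepsilon}\leq\dim_HZ_K\leq\tfrac{h(f|_W,Z_K)}{\chi-\varepsilon}$; countable stability of both $\dim_H$ and $h$ then finishes. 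So the paper absorbs the non-uniformity of the hypothesis into the stratification and outsources the geometry to \cite{Y.Cao}, whereas your argument is self-contained but must first manufacture uniform derivative bounds and the ball sandwich.

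There is one genuine gap in your write-up: the proposition is stated for $C^1$ maps, so the ``$C^{1+\alpha}$ bounded-distortion estimate for the local inverse branches'' you invoke is not available. Your uniform bounds on $\|D_xf^n\|$ and $m(D_xf^n)$ (via sub-additivity and Fekete, using that $\lim_n\tfrac1n\max_x\log\|D_xf^n\|=\sup_\mu\chi(\mu)\leq\chi+\varepsilon$) hold only at points of $W$, and passing from these to the sandwich requires controlling the derivative along geodesics inside Bowen balls, which leave $W$. In the $C^1$ category the fix is not H\"older distortion but a block argument: fix $N_0$ so that the uniform bounds hold for the single iterate $f^{N_0}$, use mere continuity of $Df^{N_0}$ to get $e^{-N_0\eta}\leq m(D_yf^{N_0})/m(D_xf^{N_0})\leq e^{N_0\eta}$ whenever $d(x,y)<\delta$, and telescope $Df^n$ into $\lfloor n/N_0\rfloor$ blocks of length $N_0$ plus a bounded remainder. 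This yields your sandwich with the constant $C$ replaced by one of order $e^{n\eta}$, which is exactly what your $\eta$-slack is designed to absorb. (This is precisely the device the paper itself employs later in the proof of Proposition~\ref{Bowen2}.) With that repair the rest of your cover manipulation goes through.
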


\begin{proof}
Since for any ~$n\in \mathbb{N}$ and any ~$x\in W$,
\begin{equation}\label{12}
m(D_xf^{n})\leq |\det (D_xf^{n})|^{\frac{1}{d}}\leq \|D_xf^{n}\|,
\end{equation}
and $W$ is a $(\chi, \varepsilon)$ average conformal repeller, then
 $$\limsup_{n\rightarrow \infty }\left|\frac{1}{n}\log|\det (D_xf^{n})|^{\frac{1}{d}}-\chi\right|<\varepsilon.$$
For any $K \in \mathbb{N}$, set
$$Z_{K}=\left\{x\in Z:  \left|\frac{1}{n}\log|\det (D_xf^{n})|^{\frac{1}{d}}-\chi\right|<\varepsilon \text{ for every } n\geq K\right\}.$$
It is easy to see  $Z=\bigcup_{K\in \mathbb{N}}Z_{K}$. For any fixed $K\in \mathbb{N}$, for any $x\in Z_{K}$ and $n\geq K$ we have
\begin{equation}\label{10}
e^{n(\chi-\varepsilon)}<|\det (D_xf^{n})|^{\frac{1}{d}}<e^{n(\chi+\varepsilon)}.
\end{equation}
Let $\phi(x) = \log |\det (D_xf)|^{\frac{1}{d}} $. Then
$\log|\det(D_xf^n)|^{\frac1d}=\sum_{i=0}^{n-1}\phi(f^ix)=S_n\phi(x)$. It yields that
$$n(\chi-\varepsilon) \leq S_n\phi(x) \leq n(\chi+\varepsilon) \text{ for every } x\in Z_K \text{ and } n\geq K.$$
Given $s \in \mathbb{R}, \ t \ge 0$,
$$m_{P}(Z_K, s, -t \phi, N, \delta)=\inf_{\mathcal{P}(Z_K, N, \delta)}\sum_{(x_{i}, n_{i})}\exp\left[-n_{i}s - t  S_{n_{i}} \phi(x_{i})\right].$$
It follows that
$$m_{P}(Z_K, s - t (\chi + \varepsilon ), 0, N, \delta) \le m_{P}(Z_K, s, -t \phi, N, \delta) \le m_{P}(Z_K, s -t (\chi - \varepsilon ), 0, N, \delta).$$
Thus
$$ m_{P}(Z_K, s - t  (\chi + \varepsilon ), 0, \delta) \le m_{P}(Z_K, s, -t \phi, \delta) \le m_{P}(Z_K, s - t (\chi - \varepsilon ), 0, \delta).$$
Therefore
$$     P_{Z_K}(0, \delta) - t (\chi + \varepsilon )  \le \  P_{Z_K}(-t \phi, \delta) \  \le \ P_{Z_K}(0, \delta) -t  (\chi - \varepsilon ). $$
Letting $\delta \to 0$, we obtain
$$  P_{Z_K}(0) - t  (\chi + \varepsilon )  \le     P_{Z_K}(-t \phi )   \le \ P_{Z_K}(0) -  t (\chi - \varepsilon ).$$
Theorem B in \cite{Y.Cao} tells us that
$$ P_{Z_K}(-\text{dim}_H Z_K \ \phi) =0.$$
Since  $ P_{Z_K}(0) = h(f|_W, Z_K) $,  then
$$\frac{h(f|_{W}, Z_K)}{\chi+\varepsilon}\leq \text{dim}_{H} Z_K\leq \frac{h(f|_{W}, Z_K)}{\chi-\varepsilon}.$$
Finally it follows from the properties of the entropy and Hausdorff dimension that
$$\frac{h(f|_{W}, Z)}{\chi+\varepsilon}\leq \text{dim}_{H} Z \leq \frac{h(f|_{W}, Z)}{\chi-\varepsilon}.$$
\end{proof}

\subsubsection{Dimensional approximations by repellers}
%By Katok's theorem we can find a sequence of repellers whose entropies and Lyapunov exponents  approximate the entropy and the Lyapunov exponents of a hyperbolic ergodic measure with positive entropy. By Bowen's equation and Young's formula, we can also obtain the approximation of the Hausdorff dimension of this measure by the Hausdorff dimensions of  these repellers.

\begin{Proposition}\label{repeller}
Let $f: M\rightarrow M$ be a  $C^{1+\alpha}$ map of a $d$-dimensional compact Riemannian manifold $M$ and $\Lambda$ be a locally maximal compact invariant subset of $M$. Suppose $f$ is  average conformal expanding on $\Lambda$, then there exist a sequence of average conformal repellers $W_{n}\subset \Lambda$ and a sequence of expanding measures $\{\mu_{n}\}_{n>0}$ on $\Lambda$ such that
$$\lim_{n\rightarrow \infty }\text{dim}_{H} W_{n}=\lim_{n\rightarrow \infty }\text{dim}_{H} \mu_{n}=DD(f |_{\Lambda}).$$
\end{Proposition}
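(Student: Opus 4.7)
The plan is to approximate $DD(f|_\Lambda)$ from below by Hausdorff dimensions of ergodic invariant measures, then use the Katok-type result (Theorem \ref{Katok'repeller}) to produce average conformal repellers inside $\Lambda$ that essentially realize these dimensions, and finally to exhibit an ergodic expanding measure on each repeller whose dimension in turn approximates that of the repeller.

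First I would unwind the definition of the dynamical dimension: pick a sequence of ergodic $f$-invariant measures $\nu_n$ on $\Lambda$ with $h_{\nu_n}(f)>0$ and $\dim_H\nu_n\to DD(f|_\Lambda)$. Since $f$ is average conformal expanding on $\Lambda$, each $\nu_n$ has a single positive Lyapunov exponent $\chi_n:=\chi(\nu_n)$; by the remark following Definition \ref{defofac} the map $f$ is in fact uniformly expanding on $\Lambda$, yielding a uniform lower bound $\chi_n\geq \chi_0>0$. The classical dimension formula for $C^{1+\alpha}$ conformal expanding measures then gives $\dim_H\nu_n = h_{\nu_n}(f)/\chi_n$.

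Next, for each $n$ choose $\varepsilon_n\in(0,\min\{1/n,\chi_0/2\})$ and apply Theorem \ref{Katok'repeller} to $\nu_n$; the local maximality of $\Lambda$ guarantees that the resulting $(\chi_n,\varepsilon_n)$ average conformal repeller $W_n$ lies in $\Lambda$, satisfies $|h(f|_{W_n})-h_{\nu_n}(f)|<\varepsilon_n$, and has $|\chi(\nu)-\chi_n|<\varepsilon_n$ for every $\nu\in\mathcal{M}_{inv}(f|_{W_n})$. Applying Proposition \ref{relation} to $Z=W_n$ sandwiches
$$\frac{h(f|_{W_n})}{\chi_n+\varepsilon_n}\leq \dim_H W_n \leq \frac{h(f|_{W_n})}{\chi_n-\varepsilon_n},$$
which, in combination with the uniform bound $\chi_n\geq \chi_0$, forces $|\dim_H W_n - h_{\nu_n}(f)/\chi_n| = O(\varepsilon_n)$ and therefore $\dim_H W_n \to DD(f|_\Lambda)$. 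To obtain $\mu_n$, I would invoke the variational principle on $W_n$ to select an ergodic $f$-invariant measure $\mu_n$ supported in $W_n\subset\Lambda$ with $h_{\mu_n}(f) > h(f|_{W_n}) - 1/n$. Then $|\chi(\mu_n)-\chi_n|<\varepsilon_n$, so $\mu_n$ is expanding, and the same conformal dimension formula gives $\dim_H\mu_n = h_{\mu_n}(f)/\chi(\mu_n)$; a direct estimate with the uniform bound on $\chi_n$ yields $|\dim_H\mu_n - \dim_H W_n|\to 0$, so $\dim_H\mu_n\to DD(f|_\Lambda)$ as well.

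The main technical obstacle is preserving the dimension approximation uniformly in $n$ while the Lyapunov exponents $\chi_n$ move; this is handled by first securing the uniform lower bound $\chi_n\geq \chi_0>0$ from the remark that average conformal expanding implies uniform expansion, and then choosing $\varepsilon_n$ small enough depending on $n$ before invoking Theorem \ref{Katok'repeller}, after which the denominators $\chi_n\pm\varepsilon_n$ become harmless perturbations of $\chi_n$. A secondary point to verify carefully is that the repeller $W_n$ produced by Theorem \ref{Katok'repeller} can indeed be arranged inside $\Lambda$, which is ensured by local maximality of $\Lambda$ together with the fact that $W_n$ lies in an arbitrarily small neighborhood of $\mathrm{supp}\,\nu_n\subset\Lambda$.
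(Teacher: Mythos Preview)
Your argument is correct and follows essentially the same route as the paper: pick ergodic measures realizing $DD(f|_\Lambda)$, apply Theorem~\ref{Katok'repeller} to produce the repellers $W_n\subset\Lambda$, and sandwich $\dim_H W_n$ via the entropy--exponent bounds together with the dimension formula $\dim_H\mu=h_\mu(f)/\chi(\mu)$. The only cosmetic differences are that the paper takes the sequence $\mu_n$ directly from the definition of $DD$ (your detour constructing new measures on $W_n$ works but is unnecessary---you could simply set $\mu_n=\nu_n$), controls the moving exponents by choosing $\varepsilon_n<\chi(\mu_n)/n$ rather than invoking your uniform lower bound $\chi_0$, and derives the lower bound on $\dim_H W_n$ via the Bowen equation of \cite{Y.Cao} and a measure of maximal entropy instead of citing Proposition~\ref{relation} for both inequalities as you do.
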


\begin{proof}
It follows from the definition of  $DD(f|_{\Lambda})$ that for any $n>0$, there exists $\mu_{n}\in\mathcal{M}_{erg}(f|_\Lambda)$ such that
$$h_{\mu_{n}}(f|_{\Lambda})>0 \mbox{ and } \text{dim}_{H} \mu_{n} \ge  DD(f|_{\Lambda}) - \frac{1}{n}.$$
Since $f$ is average conformal expanding on $\Lambda$, then $\mu_{n}$ is expanding and conformal, namely all the  Lyapunov exponents of $\mu_n$ are equal and we denote this value by $\chi(\mu_{n})>0$.
%By Ruelle's inequality~\cite{Ruelle}, ~$\chi(\mu_{n})d\geq h_{\mu_{n}}(f)>0$. Hence $\mu_{n}$ is a hyperbolic measure.
For any fixed $n\in\mathbb{N}$,
choose $\varepsilon_{n}\in\left(0, \min\left\{\frac{\chi(\mu_{n})}{n},  \frac{1}{n}\right\}\right)$.
Applying Theorem \ref{Katok'repeller} to $(\Lambda,f,\mu_n)$, one has that there exists  a $(\chi(\mu_{n}),  \varepsilon_{n})$ average conformal repeller $W_{n}\subset \Lambda$ such that
$$h_{\mu_{n}}(f|_{ \Lambda})-\varepsilon_{n}\leq h(f|_{W_{n}})\leq h_{\mu_{n}}(f|_{\Lambda})+\varepsilon_{n}$$
and $\left|\chi(\nu)-\chi(\mu_{n})\right|<\varepsilon_{n}$ for every $\nu\in\mathcal{M}_{erg}(f|_{W_n})$.
Applying Theorem B in \cite{Y.Cao} to  $(W_{n}, f)$, we have
$$P_{W_{n}}\left(f|_{W_{n}},-\text{dim}_{H}W_{n} \cdot \log \left|\det (Df|_{W_{n}})\right|^{\frac{1}{d}}\right) = 0.$$
Since $f$ is expanding on $W_{n}$, by the variational principle of topological entropy, there exists an ergodic measure $\nu_{n}\in\mathcal{M}_{erg}(f|_{W_n})$ such that $h_{\nu_{n}}(f|_{W_{n}})=h(f|_{W_{n}})$.
Thus
\begin{align*}
0&\geq h_{\nu_{n}}(f|_{W_{n}})-\int \text{dim}_{H} W_{n} \cdot \log |\det (Df|_{W_{n}})|^{\frac{1}{d}}d\nu_{n}\\
&=h_{\nu_{n}}(f|_{W_{n}})-\text{dim}_{H}W_{n} \cdot \chi(\nu_{n})\\
&= h(f|_{W_{n}})-\text{dim}_{H} W_{n} \cdot \chi(\nu_{n})\\
&\geq h_{\mu_{n}}(f|_{\Lambda})-\varepsilon_{n}-\text{dim}_{H} W_{n} \cdot \left(\chi(\mu_{n})+\varepsilon_{n}\right).
\end{align*}
Therefore $\text{dim}_{H} W_{n}\geq \frac{h_{\mu_{n}}(f|_{\Lambda})-\varepsilon_{n}}{\chi(\mu_{n})+\varepsilon_{n}}$.
Since $\mu_n$ is expanding and conformal, then $\text{dim}_{H} \mu_{n}=\frac{h_{\mu_{n}}(f|_{ \Lambda})}{\chi(\mu_{n})}$.
This implies that
\begin{equation}\label{compare1}
\begin{split}
\text{dim}_{H} W_{n}&\geq \text{dim}_{H} \mu_{n}\cdot\frac{\chi(\mu_{n})}{\chi(\mu_{n})+\varepsilon_{n}}-\frac{\varepsilon_{n}}{\chi(\mu_{n})+\varepsilon_{n}}\\
&\geq \left(DD(f|_{\Lambda})-\frac{1}{n}\right)\cdot\frac{\chi(\mu_{n})}{\chi(\mu_{n})+\varepsilon_{n}}-\frac{\varepsilon_{n}}{\chi(\mu_{n})+\varepsilon_{n}}\\
&\geq \left(DD(f|_{\Lambda})-\frac{1}{n}\right)\cdot\frac{1}{1+\frac{1}{n}}-\frac{1}{1+n}.
\end{split}
\end{equation}
On the other hand, it follows from Proposition \ref{relation} that
\begin{equation}\label{compare2}
\begin{split}
\text{dim}_{H} W_{n}&\leq \frac{h(f|_{W_{n}})}{\chi(\mu_{n})-\varepsilon_{n}}\\
&\leq \frac{h_{\mu_{n}}(f|_{\Lambda})+\varepsilon_{n}}{\chi(\mu_{n})-\varepsilon_{n}}\\
&=\text{dim}_{H}\mu_{n} \cdot\frac{\chi(\mu_{n})}{\chi(\mu_{n})-\varepsilon_{n}}+\frac{\varepsilon_{n}}{\chi(\mu_{n})-\varepsilon_{n}}\\
&\leq \text{dim}_{H}\mu_{n}\cdot\frac{1}{1-\frac{1}{n}}+\frac{1}{n-1}\\
&\leq DD(f|_{\Lambda})\cdot\frac{1}{1-\frac{1}{n}}+\frac{1}{n-1}.\\
\end{split}
\end{equation}
Combining with (\ref{compare1}), and letting  $n$ go to infinity, we get that $$\lim_{n\rightarrow \infty }\text{dim}_{H} W_{n}=\lim_{n\rightarrow \infty }\text{dim}_{H} \mu_{n}=DD(f|_{\Lambda}).$$
\end{proof}

The following Proposition is analogous. The proof of Proposition \ref{*} is parallel to that of Proposition \ref{repeller}.

\begin{Proposition}\label{*}
Let $f: M\rightarrow M$ be a  $C^{1+\alpha}$ map of a $d$-dimensional compact Riemannian manifold $M$ and $\mu$ be an expanding ergodic $f$-invariant Borel probability measure with positive entropy.
Suppose that $f$ is average conformal expanding on $M$. Then there exists  a sequence of average conformal repellers $\{W_{n}\}_{n\geq1}$ such that
$$\lim_{n\rightarrow \infty } \text{dim}_{H} W_{n}=\text{dim}_{H} \mu.$$
\end{Proposition}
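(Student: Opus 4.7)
The plan is to mimic the proof of Proposition \ref{repeller}, but instead of running over a sequence of ergodic measures $\mu_n$ approximating the dynamical dimension, I would work directly with the single given measure $\mu$. Since $f$ is average conformal expanding, $\mu$ is both expanding and conformal, so all of its Lyapunov exponents coincide in a common positive value $\chi(\mu)$ and the measure-dimension formula gives $\dim_H\mu = h_\mu(f)/\chi(\mu)$. This identity is the anchor point I would use to compare $\dim_H W_n$ against $\dim_H\mu$.

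First, fix a sequence $\varepsilon_n \in \bigl(0,\min\{\chi(\mu)/n,\,1/n\}\bigr)$ decreasing to $0$. Applying Katok's theorem (Theorem \ref{Katok'repeller}) to $\mu$, which is legitimate because $h_\mu(f)>0$ by hypothesis, produces for each $n$ a $(\chi(\mu),\varepsilon_n)$ average conformal repeller $W_n \subset M$ satisfying $|h(f|_{W_n}) - h_\mu(f)|<\varepsilon_n$ and $|\chi(\nu)-\chi(\mu)|<\varepsilon_n$ for every $\nu\in\mathcal{M}_{erg}(f|_{W_n})$. This is the analogue of the construction in Proposition \ref{repeller}, but keyed to $\mu$ rather than to $\mu_n$.

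Second, for the lower bound I would apply Theorem B in \cite{Y.Cao} on $W_n$, yielding
$$P_{W_n}\bigl(f|_{W_n},\,-\dim_H W_n \cdot \log|\det(Df|_{W_n})|^{1/d}\bigr)=0.$$
Choosing an ergodic measure $\nu_n\in\mathcal{M}_{erg}(f|_{W_n})$ of maximal entropy via the variational principle (this is possible since expansivity on $W_n$ makes entropy upper semicontinuous) and unpacking exactly as in the proof of Proposition \ref{repeller} gives
$$0 \geq h(f|_{W_n}) - \dim_H W_n\cdot \chi(\nu_n) \geq \bigl(h_\mu(f)-\varepsilon_n\bigr) - \dim_H W_n\cdot\bigl(\chi(\mu)+\varepsilon_n\bigr),$$
which after using $\dim_H\mu = h_\mu(f)/\chi(\mu)$ rearranges to
$$\dim_H W_n \geq \dim_H\mu \cdot \frac{\chi(\mu)}{\chi(\mu)+\varepsilon_n} - \frac{\varepsilon_n}{\chi(\mu)+\varepsilon_n}.$$
For the upper bound, I would apply Proposition \ref{relation} with $Z=W_n$ to obtain
$$\dim_H W_n \leq \frac{h(f|_{W_n})}{\chi(\mu)-\varepsilon_n} \leq \frac{h_\mu(f)+\varepsilon_n}{\chi(\mu)-\varepsilon_n} = \dim_H\mu\cdot\frac{\chi(\mu)}{\chi(\mu)-\varepsilon_n} + \frac{\varepsilon_n}{\chi(\mu)-\varepsilon_n}.$$

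Letting $n\to\infty$, both the upper and lower bounds converge to $\dim_H\mu$, so $\dim_H W_n \to \dim_H\mu$, as required. There is no genuine obstacle here since the whole argument is a direct adaptation of the earlier proposition; the only points requiring a little care are verifying the hypotheses of Katok's theorem (ensured by $h_\mu(f)>0$) and making sure the ergodic measure of maximal entropy $\nu_n$ on $W_n$ has Lyapunov exponent within $\varepsilon_n$ of $\chi(\mu)$, which follows because $W_n$ is a $(\chi(\mu),\varepsilon_n)$ average conformal repeller.
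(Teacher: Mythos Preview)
Your proposal is correct and follows essentially the same approach as the paper, which explicitly states that the proof of Proposition \ref{*} is parallel to that of Proposition \ref{repeller}. You have carried out exactly this adaptation: replacing the approximating sequence $\mu_n$ by the fixed measure $\mu$, then repeating the Katok-repeller construction, the lower bound via the Bowen equation and variational principle, and the upper bound via Proposition \ref{relation}.
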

%By the same argument as above, we can also prove that for any hyperbolic ergodic measure $\mu$ with positive entropy of   $C^{1+\alpha}$ average conformal map, there exists  a sequence of average conformal repellers $\{W_{n}\}_{n}$ such that $$\lim_{n\rightarrow \infty } dim_{H} W_{n}=dim_{H} \mu.$$

\subsection{Proof of Theorem \ref{main1}}
(i) Since $f$ is average conformal expanding on $W$, then $\mu$ is expanding and conformal. We denote the Lyapunov exponent of $\mu$ by $\chi(\mu)$. Therefore
 \begin{equation}\label{Young formula}
 \text{dim}_{H} \mu = \frac{h_{\mu}(f|_{W})}{\chi(\mu)}.
 \end{equation}
Combining with $\text{dim}_{H} A< \text{dim}_{H} \mu$, one has
$$h_{\mu}(f|_{W}) > 0.$$
Katok's theorem tells us that for any $ \varepsilon >0 $, there exists an average conformal repeller $W_{\varepsilon}\subset W$ such that
  $$h_{\mu}(f|_{W})+\varepsilon \geq h(f|_{W_{\varepsilon}})\geq h_{\mu}(f|_{W})-\varepsilon  $$
and $|\chi(\nu)-\chi(\mu)|<\varepsilon$ for every $\nu\in\mathcal{M}_{erg}(f|_{W_\varepsilon})$.
It follows from Theorem B in \cite{Y.Cao} that
$$P(f|_{W_{\varepsilon}}, -\text{dim}_{H} W_{\varepsilon} \cdot \log|\det (Df)|^{\frac{1}{d}})=0.$$
Since $f$ is expanding on $W_\varepsilon$, by the variational principle for entropy, there exists $\nu\in\mathcal{M}_{erg}(f|_{W_\varepsilon})$ such that $h_{\nu}(f|_{W_{\varepsilon}})=h(f|_{W_{\varepsilon}})$.
Thus
\begin{align*}
0&\geq h(f|_{W_{\varepsilon}})-\text{dim}_{H} W_{\varepsilon} \cdot \chi(\nu)\\
&\geq h(f|_{W_{\varepsilon}})-\text{dim}_{H} W_{\varepsilon} \cdot (\chi(\mu)+\varepsilon)\\
&\geq h_{\mu}(f|_{W})-\varepsilon-\text{dim}_{H} W_{\varepsilon} \cdot (\chi(\mu)+\varepsilon).
\end{align*}
Therefore
$$\text{dim}_{H} W_{\varepsilon}\geq \frac{h_{\mu}(f|_{W})-\varepsilon}{\chi(\mu) + \varepsilon}.$$
Combining with (\ref{Young formula}), it has
\begin{equation}\label{star}
\lim\limits_{\varepsilon \rightarrow 0} \text{dim}_{H} W_{\varepsilon}  \geq  \text{dim}_{H} \mu.
\end{equation}
Since  $\text{dim}_{H} A < \text{dim}_{H} \mu$,  we can choose $\varepsilon>0$ small enough such that
$$\text{dim}_{H} A < \frac{\chi(\mu)-\varepsilon}{\chi(\mu)+\varepsilon} \cdot \text{dim}_{H} W_{\varepsilon}.$$
Since   $W_{\varepsilon}$ is a $(\chi(\mu), \varepsilon)$ average conformal repeller, by Proposition \ref{relation}, we have
$$\text{dim}_{H} A\geq \text{dim}_{H}(A\cap W_{\varepsilon}) \geq  \frac{h(f|_{W_{\varepsilon}}, A \cap W_{\varepsilon})}{\chi(\mu)+\varepsilon}$$
and
$$\text{dim}_{H} W_{\varepsilon} \leq \frac{h(f|_{W_{\varepsilon}})}{\chi(\mu)-\varepsilon}.$$
Hence
$$h(f|_{ W_{\varepsilon}}, A\cap  W_{\varepsilon})\leq (\chi(\mu)+\varepsilon)\cdot\text{dim}_{H} A < (\chi(\mu)-\varepsilon)\cdot\text{dim}_{H} W_{\varepsilon} \leq h(f|_{W_{\varepsilon}}).$$
%By Proposition \ref{entropy},  we know $$h(f|_{ W_{\varepsilon}}, E^{+}_{f|_{W_{\varepsilon}}}(A\cap W_{\varepsilon})) = h(f|_{W_{\varepsilon}}).$$
Since $W_\varepsilon$ is $f$-invariant, then $E^{+}_{f|_{W_{\varepsilon}}}(A\cap W_{\varepsilon}) = E^{+}_{f|_{W_{\varepsilon}}}(A)$. Therefore
\begin{eqnarray*}
\begin{aligned}
h(f|_{W}, E^{+}_{f|_{W}}(A))&\geq h(f|_{ W_{\varepsilon}}, E^{+}_{f|_{W_{\varepsilon}}}(A))\\
 &=h(f|_{ W_{\varepsilon}}, E^{+}_{f|_{W_{\varepsilon}}}(A\cap W_\varepsilon))\\
 &= h(f|_{W_{\varepsilon}})\\
 &\geq h_{\mu}(f|_{W})-\varepsilon,
\end{aligned}
\end{eqnarray*}
the second equality is by Proposition $5.1$ in \cite{CG1}.
The arbitrariness of $\varepsilon>0$ implies that
$$h(f|_{W}, E^{+}_{f |_{W}}(A))\geq h_{\mu}(f |_{W}).$$

Since $W_\varepsilon$ is $f$-invariant and $W_\varepsilon\subset W$, then $E^{+}_{f|_{W_{\varepsilon}}}(A\cap W_{\varepsilon})\subset E^{+}_{f|_{W}}(A)$. Therefore
\begeq\label{dimension1}
\dim_{H} E^{+}_{f|_{W_{\varepsilon}}}(A\cap W_{\varepsilon})\leq \dim_{H} E^{+}_{f|_{W}}(A).\\
\eeq
Then we have
\begin{eqnarray}\label{dimension2}
\begin{aligned}
\dim_{H}~E^{+}_{f|_{W_{\varepsilon}}}(A\cap W_{\varepsilon})&\geq \frac{h(f|_{ W_{\varepsilon}}, E^{+}_{f|_{W_{\varepsilon}}}(A\cap W_{\varepsilon}))}{h(f|_{W_{\varepsilon}})}\cdot
\frac{\chi(\mu)-\varepsilon}{\chi(\mu)+\varepsilon} \cdot \dim_{H} W_\varepsilon\\
&=\frac{\chi(\mu)-\varepsilon}{\chi(\mu)+\varepsilon} \cdot \dim_{H} W_\varepsilon,
\end{aligned}
\end{eqnarray}
here the above inequality is by Proposition \ref{relation}, and the above equality is by Proposition $5.1$ in \cite{CG1}.
%Since $ 0 < \varepsilon_{n}<\frac{\chi(\mu_{n})}{n}$, one has $\lim\limits_{n\rightarrow \infty }\frac{\chi(\mu_{n})-\varepsilon_{n}}{\chi(\mu_{n})+\varepsilon_{n}}=1$.
%Taking $n$ large enough, we obtain
%\begin{eqnarray}\label{star3}
%\begin{aligned}
%\text{dim}_{H}(A\cap W_{n})&\leq \text{dim}_{H} A\\
%&<\frac{\chi(\mu_{n})-\varepsilon_{n}}{\chi(\mu_{n})+\varepsilon_{n}} \cdot \text{dim}_{H} W_{n}\\
%&\leq DD(f|_{W}),
%\end{aligned}
%\end{eqnarray}
%the second and third inequality is because of $\lim_{n\to\infty}\text{dim}_HW_n=DD(f|_W)$ and $\text{dim}_HA<DD(f|_W)$.
%Thus Proposition \ref{relation} and (\ref{star3}) imply that
%$$h(f|_{ W_{n}}, A\cap  W_{n})\leq (\chi(\mu_{n})+\varepsilon_{n})\cdot\text{dim}_{H}(A\cap  W_{n})<(\chi(\mu_{n})-\varepsilon_{n})\cdot\text{dim}_{H}W_{n}\leq h(f|_{W_{n}}).$$
%By Proposition \ref{entropy}, we have $$h(f|_{W_{n}}, E^{+}_{f|_{W_{n}}}(A\cap {W_{n}}))= h(f|_{W_{n}}).$$
Thus (\ref{dimension1}) and (\ref{dimension2}) %and taking $n$ large enough,
imply
$$\dim_H E^{+}_{f|_{W}}(A) \ge \text{dim}_{H} E^{+}_{f|_{W_\varepsilon}}(A\cap W_\varepsilon)\geq \frac{\chi(\mu)-\varepsilon}{\chi(\mu)+\varepsilon} \cdot \text{dim}_{H} W_\varepsilon.$$
Letting $\varepsilon$ tend to $0$ and combining with (\ref{star}), one has
$$\text{dim}_{H}E^{+}_{f|_{W}}(A)\geq \dim_H\mu.$$
This completes the proof of (i) in Theorem \ref{main1}.

(ii)Since $f$ is average conformal expanding on $W$ and $h(f|_W, A)<h_\mu(f|_W)$, then $\mu$ is
conformal and expanding with $h_{\mu}(f|_{W}) > 0$. Hence we denote the Lyapunov exponent of $\mu$ by $\chi(\mu)$.
Then
Katok's theorem and (\ref{star}) tell us that for any small  $\varepsilon > 0$, we can find an appropriate $(\chi(\mu),  \varepsilon)$ average conformal repeller  $W_{\varepsilon}\subset W$ such that
%\begin{itemize}
$$\dim_{H} W_\varepsilon\geq \text{dim}_{H} \mu-\varepsilon\quad \text{and}\quad
h(f|_{W}, A) < h(f|_{W_{\varepsilon}}).$$
%\end{itemize}
Hence
\begin{equation}\label{star2}
h(f|_{ W_{\varepsilon}}, A\cap  W_{\varepsilon})\le  h(f|_{W}, A) < h(f|_{W_{\varepsilon}}).
\end{equation}
%By Proposition \ref{entropy}, we have $$h(f|_{ W_{\varepsilon}}, E^{+}_{f|_{W_{\varepsilon}}}(A\cap W_{\varepsilon}))=h(f|_{W_{\varepsilon}}).$$
%Besides,  since ~$W_{\varepsilon}$ is a ~$(\chi(\mu), ~\varepsilon)$ average conformal repeller and
Since $E^{+}_{f|_{W_{\varepsilon}}}(A\cap W_{\varepsilon})\subset W_\varepsilon$,
it follows that
$$\frac{\text{dim}_{H}E^{+}_{f|_{W_{\varepsilon}}}(A\cap W_{\varepsilon})}{\text{dim}_{H}W_\varepsilon}\geq \frac{h(f|_{ W_{\varepsilon}},   E^{+}_{f|_{W_{\varepsilon}}}(A\cap W_{\varepsilon}))}{h(f|_{W_{\varepsilon}})}\cdot\frac{\chi(\mu)-\varepsilon}{\chi(\mu)+\varepsilon}=\frac{\chi(\mu)-\varepsilon}{\chi(\mu)+\varepsilon},$$
here the inequality is by Proposition \ref{relation} and the equality is by (\ref{star2}) and Proposition $5.1$ in \cite{CG1}.
%On the other hand, by the same argument as that in the proof of Theorem $\ref{dh}$, one can find  a sequence of average conformal repellers $\{W_{\varepsilon}\}$ such  that
 %$$\lim\limits_{\varepsilon \rightarrow 0}dim_{H} W_{\varepsilon} \geq dim_{H} \mu.$$
%It follows from (\ref{star}) that for any $\varepsilon>0$ small enough, there exists an average conformal repeller $W_\varepsilon$ such that $\text{dim}_{H} W_\varepsilon\geq \text{dim}_{H} \mu-\varepsilon$.
Thus it has
$$\frac{\chi(\mu)+\varepsilon}{\chi(\mu)-\varepsilon} \cdot \text{dim}_{H} E^{+}_{f|_{W_{\varepsilon}}}(A\cap W_{\varepsilon})\geq \text{dim}_{H} W_\varepsilon\geq \text{dim}_{H}~\mu-\varepsilon.$$
Since $W_\varepsilon$ is $f$-invariant and $W_\varepsilon\subset W$, then $E^{+}_{f|_{W_{\varepsilon}}}(A\cap W_{\varepsilon})\subset E^{+}_{f|_{W}}(A)$.
It yields that ~$\text{dim}_{H}E^{+}_{f|_{W_{\varepsilon}}}(A\cap W_{\varepsilon})\leq \text{dim}_{H} E^{+}_{f|_{W}}(A)$.
Therefore it has
$$\frac{\chi(\mu)+\varepsilon}{\chi(\mu)-\varepsilon} \cdot \text{dim}_{H}E^{+}_{f|_{W}}(A)\geq \text{dim}_{H} \mu-\varepsilon.$$
By the arbitrariness of $\varepsilon>0$, we obtain
$$\text{dim}_{H} E^{+}_{f|_{W}}(A)\geq \text{dim}_{H} \mu.$$
This completes the proof of (ii) in Theorem \ref{main1}.

\subsection{Proof of Corollary \ref{dd}}
It follows from the proof of Proposition \ref{repeller} that there exist a sequence of expanding measures $\{\mu_n\}_{n\geq1}$ on $W$ and a sequence of $(\chi(\mu_{n}), \varepsilon_{n})$ average conformal repellers
$\{W_{n}\}_{n\geq1}$ contained in $W$ such that
$$\lim_{n\rightarrow \infty } \text{dim}_{H} W_{n}=\lim_{n\to\infty}\text{dim}_H\mu_n=DD(f|_{W}),$$
where $\chi(\mu_n)$ is the Lyapunov exponent of $\mu_n$, and $\varepsilon_n\in\left(0, \min\{\frac{\chi(\mu_n)}{n}, \frac1n\}\right)$. Thus for $n>0$ large enough, one has
$$\dim_HA < \dim_H \mu_n \leq DD(f|_W).$$
By Theorem \ref{main1}, we obtain
$$\dim_H E^+_{f|_W}(A) \geq \dim_H \mu_n.$$
Let $n\to\infty$, one gets $\dim_H E^+_{f|_W}(A) \geq DD(f|_W).$
This completes the proof of Corollary \ref{dd}.

\subsection{Proof of Corollary \ref{urbanski}}

Since $W$ is an average conformal repeller with respect to $f$, there exists an expanding ergodic measure $\mu$ with $ h_{\mu}(f |_W) > 0$ such that

%\begin{minipage}{.8\textwidth}
\begin{equation*}
\text{dim}_H \mu = \text{dim}_H W. \footnote{The equality is an implied conclusion of Theorem C in \cite{Y.Cao}.}
\end{equation*}
%\end{minipage}
Then combining with Corollary \ref{dd} one has
$$\text{dim}_H E^{+}_{f|_{W}}(A) = \text{dim}_H W,$$
which proves Corollary \ref{urbanski}.
%\end{proof}
%\begin{Rem}

\section{Proofs of Theorem \ref{main2} and Corollary \ref{ddd}}

\subsection{Preparations}

In this section, we establish some technical results for Theorem \ref{main2}.

\subsubsection{Katok's Theorem for $C^{1+\alpha}$ nonuniform hyperbolic diffeomorphisms}
%For $C^{1+\alpha}$ diffeomorphisms, by Katok' construction \cite{Katok2}, we can approximate a hyperbolic measure with positive entropy by horseshoes in the sense of the Lyapunov exponents and entropy. Here, we use this construction to deduce a dimensional estimate for $C^{1+\alpha}$  average conformal diffeomorphisms.

\begin{Proposition}\label{horseshoe}
Let $f: M \rightarrow M$ be a $C^{1+\alpha}$ diffeomorphism of a $d$-dimensional compact Riemannian manifold $M$, and $\mu$ be an ergodic $f$-invariant Borel probability measure with positive measure-theoretical entropy $h_{\mu}(f)>0$. Suppose that $f$ is average conformal hyperbolic on M. Denote the Lyapunov exponents of $\mu$ by $\chi^u(\mu)>0$ and $\chi^s(\mu)<0$. Then for any $\varepsilon>0$, there exist $\delta(\varepsilon)>0$ and a compact subset $\Gamma=\Gamma(\mu,\varepsilon)$ of $M$ such that
\begin{itemize}
\item [(a)] $\Gamma$ is a $(\mu,  \varepsilon)$ average conformal horseshoe. Moreover, there is a positive integer $N$ and a compact subset $R\subset \Gamma$ such that
 $f^N(R)=R$. Here
 $f^{N}|_{R}$ is hyperbolic, topologically conjugate to a full two-sided shift in the symbolic space and $\Gamma=\bigcup_{i=1}^{N}f^{i}(R)$,
\item [(b)] $h(f, \Gamma)\geq h_{\mu}(f)-\varepsilon$,
\item [(c)] $\text{dim}_{H}(\Gamma\cap W_{loc}^{\ast}(x, f))\geq \frac{h_{\mu}(f)}{|\chi^{\ast}(\mu)|}-\delta(\varepsilon)$, where $\ast\in\{s, u\}$.
\end{itemize}
%Suppose that $W$ is  a locally maximal compact $f$-invariant set which is average conformal. Given a hyperbolic ergodic measure $\mu$ supported on $W$ with , there exists $ \delta:(0, 1] \rightarrow \mathbb{R}$ satisfying  $\delta(\varepsilon) \rightarrow 0$ as  $ \varepsilon \rightarrow 0$, such that for any  $\varepsilon>0$, there exists a positive integer  $N$ and a  $\Gamma=\Gamma(\mu, \varepsilon)\subset W$ for  $f$  such that
%\begin{displaymath}
 %\quad
%\end{displaymath}
%hold for  respectively, where $\chi^u(\mu)$ is the unique positive Lyapunov exponent and $\chi^s(\mu)$ is the unique negative Lyapunov exponent. Besides, there exists
\end{Proposition}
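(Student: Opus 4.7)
The plan is to prove Proposition~\ref{horseshoe} by a Katok-style horseshoe approximation coupled with the thermodynamic formalism for average conformal hyperbolic sets from \cite{BCH,WWCZ}. First I would invoke Katok's classical approximation theorem for $C^{1+\alpha}$ nonuniformly hyperbolic diffeomorphisms (see \cite{KH}) applied to the ergodic hyperbolic measure $\mu$ with $h_\mu(f)>0$. Fix a Pesin regular block $\Lambda_\ell$ with $\mu(\Lambda_\ell)>\tfrac12$ on which the splitting $E^s\oplus E^u$, the local stable/unstable manifolds, and the hyperbolicity estimates vary continuously. By the Brin--Katok local entropy formula and Shannon--McMillan--Breiman, for each $\varepsilon'>0$ and all large $n$ one finds an $(n,\rho)$-separated set $E_n\subset \Lambda_\ell$ with $\#E_n\geq e^{n(h_\mu(f)-\varepsilon')}$ whose orbits return to $\Lambda_\ell$ at time $n$. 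A standard shadowing and rectangle-intersection argument inside the Pesin block then produces a compact $f$-invariant set $\Gamma$ with the cyclic decomposition $\Gamma=\bigcup_{i=0}^{N-1}f^i(R)$, with $f^N|_R$ topologically conjugate, via a H\"older homeomorphism, to a full two-sided subshift, with $\Gamma$ contained in an $\varepsilon$-neighbourhood of the support of~$\mu$ and satisfying $h(f,\Gamma)\geq h_\mu(f)-\varepsilon$. This establishes (b) and the structural part of (a).

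To check that $\Gamma$ is a $(\mu,\varepsilon)$ horseshoe in the quantitative sense of the definition, I use that $f$ is average conformal hyperbolic on all of~$M$: every $\nu\in\mathcal{M}_{erg}(f|_\Gamma)$ is conformal and hyperbolic, with positive (resp.\ negative) exponent collapsed to a single number $\chi^u(\nu)$ (resp.\ $\chi^s(\nu)$). Choosing $\Lambda_\ell$ deep enough and $E_n$ refined enough, the Birkhoff averages of $\log\|Df|_{E^u}\|$ and $\log m(Df|_{E^s})$ along orbit segments in $\Gamma$ can be forced to lie within $\varepsilon$ of $\chi^u(\mu)$ and $\chi^s(\mu)$ respectively. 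Together with conformality and compactness of~$\Gamma$, this yields the uniform two-sided $\limsup$ controls on $\tfrac{1}{|n|}\log\|Df^n|_{E^{s/u}}\|$ and $\tfrac{1}{|n|}\log m(Df^n|_{E^{s/u}})$ required by the definition of a $(\chi^s(\mu),\chi^u(\mu),\varepsilon)$-horseshoe, completing~(a).

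For (c), the unstable slice $\Gamma\cap W^u_{loc}(x,f)$ is H\"older conjugate, via the symbolic coding, to a dynamically defined Cantor set, and the Bowen-type dimension formula for average conformal hyperbolic basic sets (see \cite{WWCZ}) identifies its Hausdorff dimension with the unique zero $t_u$ of the pressure equation
\[
P\bigl(f|_\Gamma,\,-t\log\|Df|_{E^u}\|\bigr)=0.
\]
Applying the variational principle to the measure of maximal entropy $\nu_0\in\mathcal{M}_{erg}(f|_\Gamma)$, together with the previous step and (b), gives
\[
0\geq h(f,\Gamma)-t_u\,\chi^u(\nu_0)\geq \bigl(h_\mu(f)-\varepsilon\bigr)-t_u\bigl(\chi^u(\mu)+\varepsilon\bigr),
\]
whence $t_u\geq (h_\mu(f)-\varepsilon)/(\chi^u(\mu)+\varepsilon)=h_\mu(f)/\chi^u(\mu)-\delta(\varepsilon)$, with $\delta(\varepsilon)\to 0$ as $\varepsilon\to 0$. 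The stable side is symmetric, applied to $f^{-1}$.

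The main obstacle is upgrading the Pesin-block construction, which naturally provides Lyapunov control only $\mu$-almost surely, to the \emph{uniform pointwise} $\limsup$ bounds on \emph{every} orbit in $\Gamma$ in both time directions demanded by the $(\chi^s,\chi^u,\varepsilon)$-horseshoe definition. Bridging this gap requires exploiting the \emph{global} average conformal hypothesis (every ergodic invariant measure of $f$ is conformal, not merely $\mu$), so that every invariant ergodic measure supported on $\Gamma$ collapses its Lyapunov spectrum onto a single pair that is forced to lie $\varepsilon$-close to $(\chi^s(\mu),\chi^u(\mu))$, and then carefully tuning the return times and the size of the shadowed pseudo-orbits in the Katok construction accordingly.
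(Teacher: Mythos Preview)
Your proposal is correct and follows essentially the same strategy as the paper. The only notable difference is one of economy: for parts~(a) and~(b), the paper simply invokes Theorem~1 of Gelfert~\cite{Katok2}, which already delivers the horseshoe $\Gamma$ together with the uniform Lyapunov control $|\chi^{s/u}(\nu)-\chi^{s/u}(\mu)|<\varepsilon$ for every $\nu\in\mathcal{M}_{erg}(f|_\Gamma)$, so what you flag as the ``main obstacle'' is absorbed into that citation rather than reconstructed from Pesin blocks and shadowing as you outline. For~(c) the two arguments are effectively identical: the paper uses the variational characterisation of $\dim_H(\Gamma\cap W^u_{loc}(x,f))$ from Remark~3.2 of~\cite{WWCZ} (equivalent in the average conformal setting to the Bowen pressure equation you write), takes the measure of maximal entropy $\nu$ on $\Gamma$, and derives the same chain $0\geq h(f,\Gamma)-\tau\,\chi^u(\nu)\geq (h_\mu(f)-\varepsilon)-\tau(\chi^u(\mu)+\varepsilon)$ to conclude.
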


\begin{proof}
Theorem $1$ in \cite{Katok2} and $f$ is average conformal hyperbolic, imply ($a$), ($b$), and
\begin{equation*}
|\chi^u(\mu)-\chi^u(\nu)|<\varepsilon \text{ and } |\chi^s(\mu)-\chi^s(\nu)|<\varepsilon \text{ for every } \nu\in\mathcal{M}_{erg}(f|_\Gamma).
\end{equation*}
Denote $\tau = \text{dim}_{H}(\Gamma \cap W^{u}_{\text{loc}}(x, f))$.
It follows from Remark $3.2$ in \cite{WWCZ} that
\begin{displaymath}
\sup_{\nu \in \mathcal{M}_{erg}(f|_{\Gamma})}\left\{h_{\nu}(f|_{\Gamma})- \tau\lim_{n\rightarrow \infty}\int \frac{1}{n}\log m(Df^{n}|_{E^{u}_{x}})d\nu\right\}=0.
\end{displaymath}
Since $f$ is hyperbolic on $\Gamma$, by the variational principle for entropy, there exists $\nu\in\mathcal{M}_{erg}(f|_\Gamma)$ such that $h_{\nu}(f|_{\Gamma})= h(f, \Gamma)$.
It follows that
\begin{equation*}
\begin{split}
0&\geq h_{\nu}(f|_{\Gamma})-\tau\lim_{n\rightarrow \infty}\int \frac{1}{n}\log m(Df^{n}|_{E^{u}_{x}})d\nu \\
&\geq h_{\nu}(f|_{\Gamma})-\tau\lim_{n\rightarrow \infty}\int \frac{1}{n}\log |\det Df^{n}|_{E^{u}_{x}}|^{\frac{1}{\text{dim} E^{u}_{x}}}d\nu\\
&=h_{\nu}(f|_{\Gamma})-\tau\chi^u(\nu)\\
&\geq h(f, \Gamma)-\tau\cdot(\chi^u(\mu)+\varepsilon).
\end{split}
\end{equation*}
Thus
$$\text{dim}_{H}(\Gamma \cap W^{u}_{\text{loc}}(x, f))\geq \frac{h(f, \Gamma)}{\chi^u(\mu)+\varepsilon}\geq\frac{h_\mu(f)-\varepsilon}{\chi^u(\mu)+\varepsilon},$$
which implies the desired result ($c$) for $\ast=u$.
Using the same arguments, one can prove the lower bound for $\text{dim}_{H}(\Gamma \cap W_{\text{loc}}^{s}(x, f))$.
This completes the proof of Proposition \ref{horseshoe}.
\end{proof}

\subsubsection{Bowen equation for s-saturated invariant subsets of the average conformal horseshoes}

%For the following lemma, one can refer to \cite{WWCZ}.
%\begin{Lemma} \cite{WWCZ}\label{uniform}
%Suppose $\Lambda$ is an average conformal hyperbolic invariant set, then
%\begin{align*}
%\lim_{n\rightarrow \infty}\frac{1}{n}(\log \|Df^{n}|_{E^{i}_{x}}\|-\log m(Df^{n}|_{E^{i}_{x}}))=0
%\end{align*}
%uniformly for $x\in \Lambda$, where $i=s,u$.
%\end{Lemma}

We first give the definition of s-saturated set as follow.

\begin{Definition}
Given a hyperbolic set $\Gamma \subset M$, we say a set $Z \subset \Gamma$ is  s-saturated if for any  $x\in Z$, we have  $\Gamma \cap W_{\text{loc}}^{s}(x, f)\subset Z$.
\end{Definition}

Cao proved in \cite{Y.Cao} that for any subset $Z$ of an average conformal repeller for  a $C^{1}$ map, the Hausdorff dimension of the set $Z$ is the unique root of some Bowen equation. We exploit his idea in an essential way to prove that for any  $s$-{\it saturated} invariant set $Z$ of an average conformal hyperbolic set of a $C^{1}$ diffeomorphism, the Hausdorff dimension of the intersection of the set $Z$ with the unstable leaf is the unique root of the corresponding Bowen equation.  If $Z = \Lambda$,  the authors in \cite{WWCZ} proved that the Hausdorff dimension of the set  $ \Lambda  \cap  W^{u}_{\text{loc}}(x, f) \ $ equals to  the unique root of  Bowen equation $P_{\Lambda }(-t \log \phi(x))=0$. Since $\Lambda$ is compact and invariant,  we can use the  variational principle for topological pressure to obtain the lower bound estimate. But  for a general $s$-{\it saturated} invariant set $Z$, we don't have variational principle for topological pressure. Therefore, in this case, the proof that
the Hausdorff dimension of the set $ Z \cap W^{u}_{\text{loc}}(x, f)$ equals to  the unique root of  Bowen equation $P_{Z }(-t \log \phi(x))=0$ is different.

\begin{Proposition}\label{Bowen2}
Let $f:  M\rightarrow M$ be a  $C^{1}$ diffeomorphism of a compact Riemannian manifold $M$, and $\Lambda\subset M$ be an average conformal hyperbolic set. Suppose $f$ is topologically transitive on $\Lambda$. Denote $d_{1}=\text{dim}\ E^{u}_{x}$ and $\phi(x)= |\det (D_xf|_{E^{u}_{x}})|^{\frac{1}{d_{1}}}$. Then for any  $s$-{\it saturated} invariant subset  $Z\subset \Lambda$ and any $x\in Z$, one has
$$\text{dim}_{H}(Z\cap W^{u}_{\text{loc}}(x, f))=t^{\ast},$$
where $t^{\ast}$ is the unique root of  $P_{Z}(f, -t\log \phi)=0$.
\end{Proposition}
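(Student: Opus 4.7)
My plan is to prove the two inequalities $\dim_H(Z\cap W^u_{\text{loc}}(x))\le t^*$ and $\dim_H(Z\cap W^u_{\text{loc}}(x))\ge t^*$ separately, adapting the Bowen equation strategy of \cite{Y.Cao} for average conformal repellers to the hyperbolic setting, where the $s$-saturation of $Z$ is what makes the unstable direction the only one that matters for dimension.

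For the upper bound I would use a Moran-type covering argument on the unstable leaf. Fix any $t>t^*$, so $P_Z(f,-t\log\phi)<0$; by the definition of topological pressure, for every $\delta,\eta>0$ and all sufficiently large $N$ there is a family $\{(x_i,n_i)\}\in\P(Z,N,\delta)$ with $\sum_i \exp(-t\,S_{n_i}\log\phi(x_i))<\eta$. Since $\Lambda$ is average conformal hyperbolic, for any $\epsilon>0$ and $n$ large enough, one has $\|D_xf^n|_{E^u_x}\|^{-1}\le e^{n\epsilon}\exp(-S_n\log\phi(x))$ uniformly on $\Lambda$, and therefore $B_{n_i}(x_i,\delta)\cap W^u_{\text{loc}}(x_i)$ has diameter at most $C\delta\cdot e^{n_i\epsilon}\exp(-S_{n_i}\log\phi(x_i))$. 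Summing $t$-th powers of these diameters, choosing $\epsilon$ small, and letting $\eta\to 0$ and finally $t\downarrow t^*$ gives the upper bound; this direction uses neither $s$-saturation nor invariance of $Z$.

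For the lower bound I would reduce to Cao's theorem on an average conformal expanding system by quotienting out local stable manifolds. Exploiting the local product structure of $\Lambda$ (for which I expect local maximality of $\Lambda$ to be the natural hypothesis) and topological transitivity, fix a Markov partition of sufficiently small diameter and form the stable-holonomy quotient $\bar\Lambda$ on which $f$ descends to an expanding system $\bar f$, average conformal in the sense of Definition \ref{defofac} with induced potential $\bar\phi$ coming from $\phi$. The $s$-saturated $f$-invariant set $Z$ descends to a $\bar f$-invariant $\bar Z\subset\bar\Lambda$. Because $\log\phi$ depends on the unstable direction up to a H\"older coboundary supplied by the stable foliation, the topological pressure is preserved, $P_{\bar Z}(\bar f,-t\log\bar\phi)=P_Z(f,-t\log\phi)$, so the Bowen-equation root is still $t^*$. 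The quotient map restricted to $W^u_{\text{loc}}(x)$ is a homeomorphism onto a neighborhood in $\bar\Lambda$ that identifies $Z\cap W^u_{\text{loc}}(x)$ with the corresponding subset of $\bar Z$; an average-conformal distortion estimate shows this identification preserves Hausdorff dimension. Cao's Bowen equation applied to $\bar Z\subset\bar\Lambda$ then yields $\dim_H(\bar Z\cap \bar U)=t^*$, closing the lower bound.

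The main obstacle is the regularity of the stable-holonomy quotient and the rigorous identification of pressures and Hausdorff dimensions across it. Stable holonomies on hyperbolic sets are generally only H\"older, so $\bar f$ is not literally $C^{1+\alpha}$; in the average conformal regime, however, the subexponential discrepancies among $\|D_xf^n|_{E^u_x}\|$, $m(D_xf^n|_{E^u_x})$, and $e^{S_n\log\phi(x)}$ mean that the holonomy distortion contributes only $e^{o(n)}$ factors, which are invisible at the level of Hausdorff dimension. As a backup avoiding the quotient entirely, I would build a mass distribution directly on $Z\cap W^u_{\text{loc}}(x)$ from sequences of $(n,\delta)$-separated subsets of $Z$ realizing near-maximal values of $\sum\exp(-tS_n\log\phi)$ for $t<t^*$, pushing them along local stable manifolds into $W^u_{\text{loc}}(x)$ via $s$-saturation, and then applying the mass distribution principle.
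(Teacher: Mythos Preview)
Your upper-bound argument is essentially the paper's: cover $Z$ by Bowen balls coming from $P_Z(f,-t\log\phi)<0$, restrict to those meeting $Z\cap W^u_{\text{loc}}(x)$, and use the average conformal estimate $m(D_xf^n|_{E^u})\ge e^{-n\epsilon}\exp(S_n\log\phi(x))$ to bound unstable diameters. That part is fine.

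For the lower bound, however, the paper does \emph{not} pass to a stable-holonomy quotient and invoke Cao's theorem, and your own diagnosis explains why that route is problematic: the quotient system $\bar f$ on $\bar\Lambda$ is neither $C^1$ nor defined on a manifold, so Theorem~B of \cite{Y.Cao} does not apply to it as a black box. Saying that the distortion is $e^{o(n)}$ does not repair this; it only tells you that \emph{if} you redo the whole Bowen-equation argument in the quotient, the error terms will be harmless---but then you have essentially reproved the proposition from scratch, not reduced to Cao.

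The paper's lower-bound argument stays entirely on $\Lambda$ and uses $s$-saturation and transitivity in a direct way. Fix $s<t^*$ and take any cover of $Z\cap\bigcup_{i=0}^{m} f^i(W^u_{\text{loc}}(x))$ by small unstable balls $B_u(x_i,r_i)$. For each ball, choose the largest $n_i$ with $B_u(x_i,r_i)\subset B_{n_i}(x_i,\delta)$; the average conformal estimates give $r_i\gtrsim e^{-2n_i\rho}\exp(-S_{n_i}\log\phi(x_i))$. Now thicken each $B_u(x_i,r_i)$ along local stable leaves to a set $B_u^*(x_i,r_i)\subset B_{n_i}(x_i,a\delta)$. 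Because $Z$ is $s$-saturated and $f$ is transitive on $\Lambda$, a finite number of $f$-images of $W^u_{\text{loc}}(x)$ meet every local stable leaf of $Z$, so $\{B_u^*(x_i,r_i)\}$ covers \emph{all of} $Z$. Thus $\sum_i(2r_i)^s$ dominates a constant times $m_P(Z,2s\rho,-s\log\phi,N,a\delta)$, which tends to $\infty$ since $2s\rho<P_Z(f,-s\log\phi)$. Hence $\mathcal{H}^s$ of the union of leaf-slices is infinite; invariance of $Z$ then gives $\dim_H(Z\cap W^u_{\text{loc}}(x))\ge s$. This is the key idea your quotient strategy is missing: $s$-saturation lets you promote a leafwise cover to a cover of $Z$, so the pressure of $Z$ itself---for which no smooth expanding model is needed---controls the leafwise Hausdorff measure.
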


\begin{proof}
First of all we prove that  $\text{dim}_{H}(Z\cap W^{u}_{\text{loc}}(x, f))\geq t^{\ast}$ for every $x\in Z$. Since it holds for $t^{\ast}=0$ automatically, then we suppose $t^{\ast}>0$. For  $0 < s <t^{\ast}$,
we know $P_{Z}(f, -s\log \phi) > 0$.
Let $\rho>0$ be  small enough  such that $2s\rho < P_{Z}(f, -s\log \phi)$. It follows from Lemma $2.1$ in \cite{WWCZ} that there exists $N_{0}\in \mathbb{N}$ such that for  $n\geq N_{0}$ and any  $x\in \Lambda$,
\begin{equation}\label{star4}
1\leq \frac{\prod_{i=0}^{n-1}\phi(f^{i}(x))}{m(D_xf^{n}|_{E^{u}_{x}})}\leq e^{n\rho} \text{ and } e^{-n\rho}\leq \frac{\prod_{i=0}^{n-1}\phi(f^{i}(x))}{||D_xf^{n}|_{E^{u}_{x}}||}\leq 1.
\end{equation}
For the above $\rho>0$, there exists $\delta>0$ such that for any $x,y\in M$ with $d(x,y)< \delta$,
$$e^{-N_{0}\rho}< \frac{\|D_xf^{N_{0}}|_{E^{u}_{x}}\|}{\|D_yf^{N_{0}}|_{E^{u}_{y}}\|}< e^{N_{0}\rho}.$$

For any fixed $x\in Z$, since $f$ is topologically transitive on $\Lambda$ and $Z$ is $s$-saturated, then there exists $m\in\mathbb{N}$ such that for any $y\in Z$,
\begin{equation}\label{star6}
\left(\bigcup_{i=0}^{m} f^i(W_{\text{loc}}^u(x, f))\right)\cap W^{s}_{\text{loc}}(y, f)\cap Z \neq \varnothing.
\end{equation}
By the definition of the $s$-dimensional Hausdorff measure, there exists $\varepsilon_{0}>0$ such that for any $\varepsilon\in(0, \varepsilon_0)$, one has an open cover $$\mathcal{C}_{\varepsilon}=\left\{B_{u}(x_{i}, r_{i}): x_i\in Z\cap \left(\bigcup_{i=0}^mf^i(W^{u}_{\text{loc}}(x,f))\right) \text{ and } r_{i}\in(0, \varepsilon)\right\}$$
of the set $Z\cap \left(\bigcup_{i=0}^mf^i(W^{u}_{\text{loc}}(x,f))\right)$  and
$$\mathcal{H}^{s}_{\varepsilon}\left(Z\cap \left(\bigcup_{i=0}^mf^i(W^{u}_{\text{loc}}(x,f))\right)\right) + 1\geq \sum_{B_{u}(x_{i}, r_{i})\in \mathcal{C}_{\varepsilon}}(2r_{i})^{s}.$$
(Here $B_{u}(x_i,r_i)=\{y\in W^{u}_{\text{loc}}(x_i,f): d_{u}(x_i,y)<r_i\}$, and $d_{u}$ is the distance on the unstable leaf induced by the Riemannian metric.)
For any ball $B_{u}(x_{i}, r_{i})\in\mathcal{C}_\varepsilon$,  there exists $n_{i}\in \mathbb{N}$ such that
\begin{equation}\label{star5}
B_{u}(x_{i}, r_{i})\subset B_{n_{i}}(x_{i}, \delta), \ \mbox{ but} \ \  B_{u}(x_{i}, r_{i})\nsubseteq B_{n_{i}+1}(x_{i}, \delta).
\end{equation}
%where ~$B_{u}(x_{i},r_{i})$ is a ball with radius ~$r_{i}$ on the local unstable leaves ~$\bigcup_{i=0}^{m} f^{i}(W_{l}^{u}(x, f))$, and the distance used here is the distance on the corresponding unstable leaf induced by the Riemannian metric.
We can choose the above $\varepsilon_{0}$ sufficiently small such that $n_{i}> N_{0}$ for all $i$. For each $n_{i}$, there exist $q\in \mathbb{N}$ and $0\leq r<N_{0}$ such that $n_{i}=qN_{0}+r$. Let
$$K=\max_{0\leq r<N_{0}}\max\limits_{y\in M}\|D_yf^{r}|_{E^{u}_{y}}\| \, \mbox{ and } \  L=\min_{0\leq r<N_{0}}\min_{y\in M}\prod_{j=0}^{r-1}\phi(f^{j}(y)).$$
(\ref{star5}) implies that there is $y\in B_{u}(x_{i}, r_{i})$ such that $ d(f^{j}(x_{i}), f^{j}(y)) < \delta$ for  $j=0, 1, \cdots,  n_{i}-1$, but $d(f^{n_{i}}(x_{i}), f^{n_{i}}(y))\geq \delta$. Hence there exists $z\in B_{u}(x_{i},r_{i})$ with $d(f^{j}(x_{i}), f^{j}(z)) < \delta$  for  $j=0, 1,  \cdots,  n_{i}-1$,  and
 \begin{align*}
\delta &\leq d(f^{n_{i}}(x_{i}), f^{n_{i}}(y))\\
&\leq d_{u}(f^{n_{i}}(x_{i}), f^{n_{i}}(y))\\
&\leq \|D_zf^{n_{i}}|_{E^{u}_{z}}\|d_{u}(x_{i}, y)\\
&\leq K\left(\prod_{j=0}^{q-1}\|D_{f^{jN_0}(z)}f^{N_{0}}|_{E^{u}_{f^{jN_{0}}(z)}}\|\right) \cdot d_{u}(x_{i}, y)\\
&< e^{qN_{0}\rho}K\left(\prod_{j=0}^{q-1}\|D_{f^{jN_0}(x_i)}f^{N_{0}}|_{E^{u}_{f^{jN_{0}}(x_{i})}}\|\right) \cdot d_{u}(x_{i}, y)\\
&< Ke^{2qN_{0}\rho}r_{i}\prod_{j=0}^{qN_{0}-1}\phi(f^{j}(x_{i})),
\end{align*}
here the last inequality is by (\ref{star4}).
It follows that
\begin{align*}
r_{i}&> \frac{\delta}{Ke^{2qN_{0}\rho}\cdot \prod_{j=0}^{qN_{0}-1}\phi(f^{j}(x_{i}))}\\
&\geq\frac{\delta \left(\prod^{n_{i}-1}_{j=qN_{0}}\phi(f^{j}(x_{i}))\right) \cdot e^{2\rho r}}{K\left(\prod_{j=0}^{n_{i}-1}\phi(f^{j}(x_{i}))\right) \cdot e^{2n_{i}\rho}}\\
&\geq\frac{\delta L}{K\left(\prod_{j=0}^{n_{i}-1}\phi(f^{j}(x_{i}))\right) \cdot e^{2n_{i}\rho}}.
\end{align*}
Set
 \begin{equation*}
 \mathcal{C}_{\varepsilon}^{\ast}=\{B_{u}^{\ast}(x_{i}, r_{i}): B_{u}^{\ast}(x_{i}, r_{i})=\bigcup_{p\in B_{u}(x_{i}, r_{i})}W^{s}_{\text{loc}}(p, f) \text{ for each } B_u(x_i, r_i)\in\mathcal{C}_\varepsilon\}.
 \end{equation*}
It follows from (\ref{star6}) that $\mathcal{C}_{\varepsilon}^{\ast}$ is a cover of the set $Z$. Therefore there is a positive constant $a$ depending only on the dimension of the manifold such that for each $B_{u}^{\ast}(x_{i}, r_{i})\in \mathcal{C}_{\varepsilon}^{\ast}$,
$$B_{u}^{\ast}(x_{i}, r_{i})\subset B_{n_{i}}(x_{i}, a \delta).$$
It follows that
\begin{align*}
\sum_{B_{u}(x_{i}, r_{i})\in \mathcal{C}_{\varepsilon}}(2r_{i})^{s}&> (\frac{2\delta L}{K})^{s} \sum_{B_{u}(x_{i}, r_{i})\in \mathcal{C}_{\varepsilon}}\frac{1}{\left(\prod_{j=0}^{n_{i}-1}(\phi(f^{j}(x_{i})))^{s}\right) \cdot e^{2sn_{i}\rho}}\\
&=(\frac{2\delta L}{K})^{s}\sum_{(x_{i}, n_{i})} \exp\left[-S_{n_{i}}(s\log \phi(x_{i}))+(-2sn_{i}\rho)\right]\\
&\geq (\frac{2\delta L}{K})^{s}m_{P}(Z, 2s\rho, -s\log \phi, N, a\delta),
\end{align*}
where  $N=\min\{n_{i}\}$. It is easy to see if  $\varepsilon \rightarrow 0$, then $N\rightarrow \infty$.
Since  $2s\rho < P_{Z}(f, -s\log \phi)$, then
 $$\lim_{N\rightarrow \infty}m_{P}(Z, 2s\rho, -s\log \phi, N, a\delta)=\infty.$$
It implies that
$$\lim_{\varepsilon \rightarrow 0}\mathcal{H}^{s}_{\varepsilon}\left(Z\cap \left(\bigcup_{i=0}^mf^{i}(W^{u}_{\text{loc}}(x, f))\right)\right)=\infty.$$
Hence
$$\text{dim}_{H}\left(Z \cap \left(\bigcup_{i=0}^mf^{i}(W^{u}_{\text{loc}}(x, f))\right)\right)\geq s.$$
The arbitrariness  of $s < t^{\ast} $  implies that
$$\text{dim}_{H}\left(Z \cap  \left(\bigcup_{i=0}^mf^{i}(W^{u}_{\text{loc}}(x, f))\right)\right)\geq t^{\ast}.$$
%By Proposition \ref{independent}, $\text{dim}_{H}(Z\cap W^{u}_{\text{loc}}(x, f))$ is independent of the choices of the point  $x\in \Lambda$ and the size of  $W^{u}_{\text{loc}}(x, f)$.
Since $\text{dim}_H\left(Z\cap \left(\bigcup_{i=0}^mf^{i}(W^{u}_{\text{loc}}(x, f))\right)\right)=\max_{0\leq i\leq m}\text{dim}_H(Z\cap f^i(W_{\text{loc}}^u(x,f)))$
and  the invariance of the set $Z$, we obtain
$$\text{dim}_{H}  (Z \cap W^{u}_{\text{loc}}(x, f) ) = \text{dim}_{H}\left(Z\cap \left(\bigcup_{i=0}^mf^{i}(W^{u}_{\text{loc}}(x, f))\right)\right) \geq t^{\ast}.$$

On the other hand we will prove $\text{dim}_{H}(Z \cap W^{u}_{\text{loc}}(x, f))\leq t^{\ast}$ for every $x\in Z$. For any  $s > t^{\ast}$, we have  $P_{Z}(f, -s \log \phi) < 0$ and denote  $A = P_{Z}(f, -s\log \phi)$. Let $\rho > 0$ be small enough such that  $6s \rho < -A$. Since  $A<0$, there exists $\delta_{0}>0$  such that for any $\delta\in(0,\delta_{0})$, it has
$$P_{Z}(-s\log \phi, \delta)<\frac{A}{2} < 0.$$
Thus
$$\lim_{N\rightarrow \infty} m_{P}(Z, \frac{A}{2}, -s\log \phi, N, \delta)=0,$$
where
$$m_{P}(Z, \frac{A}{2}, -s\log \phi, N, \delta)=\inf\left\{\sum_{B_{n_i}(x_i, \delta)\in \mathcal{C}} \exp\left[-\frac{A}{2}n_{i}-sS_{n_{i}}(\log \phi(x_{i}))\right]\right\},$$
the infimum is taken over all the covers $\mathcal{C}=\{B_{n_{i}}(x_{i}, \delta):x_{i}\in Z \text{ and } n_{i}\geq N\}$ of the set $Z$.
For the above $\rho>0$, there is $0< \delta <\delta_{0}$ such that for any $x,y\in M$ with $d(x,y) < 2\delta$, we have
\begin{align}\label{star7}
|\phi(x)-\phi(y)|< \rho \mbox{ and }  e^{-N_{0}\rho}<\frac{m(D_xf^{N_{0}}|_{E^{u}_{x}})}{m(D_yf^{N_{0}}|_{E^{u}_{y}})}<e^{N_{0}\rho}.
\end{align}
For any fixed $x\in Z$ and $N>N_0$, then $n_i>N_0$ for each $n_i$ in $\mathcal{C}$. Set
\begin{equation*}
\overline{\mathcal{C}}=\{B_{n_i}(x_i, \delta)\in\mathcal{C}: B_{n_i}(x_i, \delta)\cap Z\cap W^u_{\text{loc}}(x, f)\neq\emptyset\}
\end{equation*}
and
\begin{equation*}
\mathcal{C}'=\{B_{n_i}(x_i', 2\delta): B_{n_i}(x_i, \delta)\in\overline{\mathcal{C}} \text{ and } x_{i}^{\prime} \in B_{n_{i}}(x_{i}, \delta) \cap Z\cap W^{u}_{\text{loc}}(x, f)\}.
\end{equation*}
It is easy to see $\mathcal{C}'$ is  a cover of the set $Z\cap W^{u}_{\text{loc}}(x, f)$.
%Fix $N > N_{0}$ and hence $n_{i}>N_{0}$. We consider those  elements in the cover $\mathcal{C}$, which have  nonempty intersections with  $Z \cap W^{u}_{\text{loc}}(x, f)$. We denote them by $\overline{\mathcal{C}} $. For each   $B_{n_{i}}(x_{i}, \delta) \in \overline{\mathcal{C}}$,   there exists  . Let $\mathcal{C}^{\prime} = \{ B_{n_{i}}(x_{i}^{\prime}, 2\delta) | \  B_{n_{i}}(x_{i}, \delta) \in \overline{\mathcal{C}}\}.$ Then it   Next we estimate the diameter of $B_{n_{i}}(x^{\prime}_{i}, 2\delta)\cap W^{u}_{\text{loc}}(x, f)$.
For each $n_{i}$, there exist $q\in \mathbb{N}$ and $0\leq r<N_{0}$ such that $n_{i}-1=qN_{0}+r$.
Notice that there is $\kappa>1$ such that for any $x\in \Lambda$ and  $y, z\in W^{u}_{\text{loc}}(x,f)$, we have  $d_{u}(y, z)\leq \kappa d(y, z)$.
Denote
$$L^{\prime}=\min_{0\leq r<N_{0}}\min_{y\in M} \left\{m(D_yf^{r})\right\} \  \mbox { and } \  \  K^{\prime}=\max_{0\leq r<N_{0}}\max_{y\in M} \left\{\prod_{j=0}^{r-1}\phi(f^{j}(y))\right\}.$$
For any   $y\in B_{n_{i}}(x_{i}^{\prime}, 2\delta)\cap W^{u}_{\text{loc}}(x, f)$,  there exists $\xi\in B_{n_{i}}(x_{i}^{\prime}, 2\delta)$ such that
\begin{align*}
d_{u}(f^{n_{i}-1}x_{i}^{\prime}, f^{n_{i}-1}y) & \ge  \  m(D_\xi f^{n_{i}-1}|_{E^{u}_{\xi}})d_{u}(x_{i}^{\prime}, y)\\
&  \ge \  \prod^{q-1}_{j=0}m(D_{f^{jN_0}(\xi)}f^{N_{0}}|_{E^{u}_{f^{jN_{0}}(\xi)}})\cdot m(D_{f^{qN_0}(\xi)}f^{r}|_{E^{u}_{f^{qN_{0}}(\xi)}})\cdot d_{u}(x_{i}^{\prime}, y)\\
& \ge \  e^{-qN_{0}\rho}L^{\prime}\cdot \prod^{q-1}_{j=0}m(D_{f^{jN_0}(x_i')}f^{N_{0}}|_{E^{u}_{f^{jN_{0}}(x^{\prime}_{i})}})\cdot d_{u}(x_{i}^{\prime}, y)\\
& \ge  \ e^{-2qN_{0}\rho} L^{\prime}\cdot \prod_{j=0}^{qN_{0}-1}\phi(f^{j}(x^{\prime}_{i}))\cdot d_{u}(x_{i}^{\prime}, y).
\end{align*}
Since
\begin{align*}
d_{u}(f^{n_{i}-1}x_{i}^{\prime}, f^{n_{i}-1}y) \leq \kappa d(f^{n_{i}-1}x_{i}^{\prime}, f^{n_{i}-1}y)<2\delta\kappa,
\end{align*}
then  it has
$$e^{-2n_{i}\rho} \prod_{j=0}^{n_{i}-1}\phi (f^{j}(x_{i}^{\prime}))\cdot d_{u}(x_{i}^{\prime}, y)<\frac{2\delta\kappa K^{\prime}}{L^{\prime}}.$$
Hence we get an estimate of the diameter of  $B_{n_{i}}(x^{\prime}_{i}, 2\delta)\cap W^{u}_{\text{loc}}(x, f)$:
\begin{equation}\label{star8}
\text{diam}(B_{n_{i}}(x_{i}^{\prime}, 2\delta)\cap W^{u}_{\text{loc}}(x, f))\leq \frac{4\delta\kappa K^{\prime} e^{2n_{i}\rho}}{L^{\prime}\prod_{j=0}^{n_{i}-1}\phi (f^{j}(x_{i}^{\prime}))}\leq \frac{4\delta\kappa K^{\prime} e^{2n_{i}\rho}}{L^{\prime}\lambda^{n_{i}}}\triangleq l_{n_{i}},
\end{equation}
where  $\lambda>1$ is the inverse of the skewness from the hyperbolicity. It is easy to see that  $\lim_{n\rightarrow \infty} l_{n}=0$ for sufficiently small $\rho>0$.
It follows from (\ref{star7}) and (\ref{star8}) that
\begin{align*}
&m_{P}(Z, \frac{A}{2}, -s\log \phi, N, 2\delta)\\
=&\inf_{\mathcal{C}}\{\sum_{B_{n_i}(x_i, \delta)\in \mathcal{C}} \exp[-\frac{A}{2}n_{i}-sS_{n_{i}}(\log \phi(x_{i}))]\}\\
\geq& \inf_{\mathcal{C}}\{\sum_{B_{n_{i}}(x_{i}^{\prime}, 2\delta)\cap W^{u}_{\text{loc}}(x, f)\in \mathcal{C}^{\prime}} \exp[-\frac{A}{2}n_{i}-n_{i}\rho s-sS_{n_{i}}(\log \phi(x^{\prime}_{i}))]\}\\
\geq& \inf_{\mathcal{C}}\{\sum_{B_{n_{i}}(x_{i}^{\prime}, 2\delta)\cap W^{u}_{\text{loc}}(x, f)\in \mathcal{C}^{\prime}} \exp(-\frac{A}{2}n_{i}-3n_{i}\rho s)\cdot (\frac{L^{\prime}}{4\delta\kappa K^{\prime}})^{s}\\
&\ \ \ \ \ \ \ \ \ \ \ \ \ \ \ \ \ \ \ \ \ \ \ \ \ \ \ \ \ \ \ \ \cdot (\text{diam} (B_{n_{i}}(x_{i}^{\prime}, 2\delta)\cap W^{u}_{\text{loc}}(x, f)))^{s}\}\\
\geq& \inf_{\mathcal{U}}\{\sum_{B_{i}\in \mathcal{U}} (\frac{L^{\prime}}{4\delta\kappa K^{\prime}})^{s}\cdot (\text{diam} B_{i})^{s}\}\\
=& (\frac{L^{\prime}}{4\delta\kappa K^{\prime}})^{s}\cdot \mathcal{H}^{s}_{l_{N}}(Z\cap W^{u}_{\text{loc}}(x, f)),
\end{align*}
where the infimum is taken over all the finite or countable cover $\mathcal{U}$ of  $Z\cap W^{u}_{\text{loc}}(x, f)$ satisfying  $\text{diam}(\mathcal{U})<l_{N}$.
%Hence it has
%$$m_{P}(Z, \frac{A}{2}, -s\log \phi, N, 2\delta)\geq (\frac{L^{\prime}}{4\delta\kappa K^{\prime}})^{s} \mathcal{H}^{s}_{l_{N}}(Z\cap W^{u}_{\text{loc}}(x, f)).$$
Letting  $N$ tend to infinity, we get that $$\mathcal{H}^{s}(Z\cap W^{u}_{\text{loc}}(x, f))=0.$$
Thus $\text{dim}_{H}(Z \cap W^{u}_{\text{loc}}(x, f))\leq s $ for every $s > t^{\ast } $. Since this holds for all $s > t^{\ast}$, we have
  $$\text{dim}_{H}(Z \cap W^{u}_{\text{loc}}(x, f))\leq t^{\ast}.$$
\end{proof}

\subsubsection{Dimensional estimates of the intersection of the s-{\it saturated} invariant subsets of average conformal horseshoes with the unstable manifold}

By Proposition \ref{Bowen2}, we prove a parallel result to Proposition \ref{relation} for s-{\it saturated} invariant subsets of an average conformal horseshoes.

\begin{Proposition}\label{relation2}
Let  $M$ be a compact Riemannian manifold and $f: M\rightarrow M$ be a $C^{1+\alpha}$ diffeomorphism. Given $\chi^{s}<0<\chi^{u}$ and $0<\varepsilon<\min\{\chi^{u}, -\chi^{s}\}$, let $\Gamma$ be a  $ (\chi^{s}, \chi^{u}, \varepsilon)$ horseshoe. Suppose $\Gamma$ is average conformal and topologically transitive, then for any $s$-saturated invariant subset $Z \subset \Gamma$ and any  $x\in Z$, one has
$$\frac{h(f|_{\Gamma}, Z)}{\chi^{u}+\varepsilon}\leq \text{dim}_{H}(Z\cap W^{u}_{\text{loc}}(x, f))\leq \frac{h(f|_{\Gamma}, Z)}{\chi^{u}-\varepsilon}.$$
\end{Proposition}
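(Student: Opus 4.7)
The proof will run parallel to that of Proposition \ref{relation}, with Proposition \ref{Bowen2} playing the role that Cao's Bowen equation for repellers played in the uniformly expanding case. Set $d_1 = \dim E^u$ and $\phi(x) = |\det(D_xf|_{E^u_x})|^{1/d_1}$. The elementary sandwich $m(D_xf^n|_{E^u_x}) \leq |\det(D_xf^n|_{E^u_x})|^{1/d_1} \leq \|D_xf^n|_{E^u_x}\|$ combined with the $(\chi^s, \chi^u, \varepsilon)$-horseshoe hypothesis gives
$$\limsup_{n\to\infty}\left|\frac{1}{n}S_n\log\phi(x) - \chi^u\right| < \varepsilon \quad \text{for every } x\in\Gamma.$$
Since $Z$ is $s$-saturated and $f$-invariant, Proposition \ref{Bowen2} identifies $\dim_H(Z\cap W^u_{\text{loc}}(x,f))$ as the unique root $t^*$ of the Bowen equation $P_Z(f, -t\log\phi) = 0$.

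Next I would mimic the Carath\'eodory-level sandwich from the proof of Proposition \ref{relation}. Define
$$Z_K = \left\{x\in Z : \left|\frac{1}{n}S_n\log\phi(x) - \chi^u\right| < \varepsilon \text{ for all } n\geq K\right\},$$
so $Z = \bigcup_{K\geq 1} Z_K$, and the uniform bound $n(\chi^u-\varepsilon) \leq S_n\log\phi(x) \leq n(\chi^u+\varepsilon)$ holds for $x\in Z_K$ and $n\geq K$. Substituting these inequalities into the sum defining $m_P(Z_K, s, -t\log\phi, N, \delta)$ (with $N\geq K$ and $t\geq 0$) yields
$$m_P(Z_K, s+t(\chi^u+\varepsilon), 0, N, \delta) \leq m_P(Z_K, s, -t\log\phi, N, \delta) \leq m_P(Z_K, s+t(\chi^u-\varepsilon), 0, N, \delta).$$
Passing to the critical values in the usual way and then sending $\delta\to 0$, I obtain
$$h(f|_\Gamma, Z_K) - t(\chi^u+\varepsilon) \leq P_{Z_K}(-t\log\phi) \leq h(f|_\Gamma, Z_K) - t(\chi^u-\varepsilon).$$

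To transfer these bounds from $Z_K$ to $Z$ I would invoke the countable stability of the topological entropy (property (iii) in the preliminaries) together with the analogous countable stability of Pesin's Carath\'eodory pressure, which gives $h(f|_\Gamma, Z) = \sup_K h(f|_\Gamma, Z_K)$ and $P_Z(-t\log\phi) = \sup_K P_{Z_K}(-t\log\phi)$. Taking suprema in the displayed pressure sandwich and then specializing to $t = t^*$, so that the middle quantity vanishes, rearrangement yields
$$\frac{h(f|_\Gamma, Z)}{\chi^u + \varepsilon} \leq t^* = \dim_H(Z\cap W^u_{\text{loc}}(x,f)) \leq \frac{h(f|_\Gamma, Z)}{\chi^u - \varepsilon},$$
as desired.

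The main obstacle is that the subsets $Z_K$ are neither $s$-saturated nor $f$-invariant, so Proposition \ref{Bowen2} cannot be applied to them directly; this forces the sandwich to be executed at the level of the pressures $P_{Z_K}(-t\log\phi)$ rather than at the level of dimensions, with the Bowen equation invoked only once on the whole set $Z$. A secondary point, not stated explicitly in the excerpt but needed here, is the countable stability identity $P_Z(-t\log\phi) = \sup_K P_{Z_K}(-t\log\phi)$, which follows from a routine Carath\'eodory-style cover argument: given $\epsilon$-optimal covers $\mathcal{C}_K$ of $Z_K$ with geometric weights bounded by $\epsilon\cdot 2^{-K}$, the union $\bigcup_K \mathcal{C}_K$ is a cover of $Z$ with total weight at most $\epsilon$.
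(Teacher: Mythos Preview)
Your proposal is correct and follows the same overall scheme as the paper: decompose $Z$ into the sets $Z_K$ on which the Birkhoff averages of $\log\phi$ are uniformly $\varepsilon$-close to $\chi^u$, derive the pressure sandwich $h(f|_\Gamma, Z_K) - t(\chi^u+\varepsilon) \leq P_{Z_K}(-t\log\phi) \leq h(f|_\Gamma, Z_K) - t(\chi^u-\varepsilon)$, and combine this with the Bowen equation of Proposition~\ref{Bowen2} and countable stability.

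There is one genuine difference in execution, and yours is the more careful route. The paper applies Proposition~\ref{Bowen2} directly to each $Z_K$, identifying $\dim_H(Z_K\cap W^u_{\text{loc}}(x,f))$ as the zero of $t\mapsto P_{Z_K}(-t\log\phi)$, and only afterwards passes to the supremum over $K$ via countable stability of entropy and of Hausdorff dimension. As you correctly flag, this step is not justified by the statement of Proposition~\ref{Bowen2} as written, since $Z_K$ need be neither $s$-saturated nor $f$-invariant. Your reordering---first take suprema at the level of the Carath\'eodory pressures to obtain the sandwich for $Z$ itself, and only then invoke Proposition~\ref{Bowen2} once on the $s$-saturated invariant set $Z$---sidesteps this issue entirely. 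The countable stability of $P_Z(-t\log\phi)$ that you need for this is indeed the standard Carath\'eodory argument you sketch.
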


\begin{proof}
Since for $x\in Z$,
\begin{displaymath}
\limsup_{n\rightarrow \infty}\left|\frac{1}{n}\log\|D_xf^{n}|_{E^{u}_{x}}\|-\chi^{u}\right|<\varepsilon \text{ and } \limsup_{n\rightarrow \infty}\left|\frac{1}{n}\log m(D_xf^{n}|_{E^{u}_{x}})-\chi^{u}\right|<\varepsilon,
\end{displaymath}
then
\begin{displaymath}
\limsup_{n\rightarrow \infty}\left|\frac{1}{n}\log|\det (D_xf^{n}|_{E^{u}_{x}})|^{\frac{1}{d_{1}}}-\chi^{u}\right|<\varepsilon,
\end{displaymath}
where $d_1=\text{dim} E_x^u$. For any $K \in \mathbb{N}$, set
$$Z_{K}=\left\{x\in Z:\left|\frac{1}{n}\log|\det (D_xf^{n}|_{E^u_x})|^{\frac{1}{d_1}}-\chi^u \right|<\varepsilon \text{ for every } n\geq K\right\}.$$
It is easy to see  $Z=\bigcup_{K\in \mathbb{N}}Z_{K}$. For any fixed $K\in \mathbb{N}$, for any $x\in Z_{K}$ and  $n\geq K$,
\begin{equation}\label{10}
e^{n(\chi^u -\varepsilon)}<|\det (D_xf^{n}|_{E^u_x})|^{\frac{1}{d_1}}<e^{n(\chi^u+\varepsilon)}.
\end{equation}

Let $\phi(x) = \log |\det (D_xf |_{E^u_x}) | ^{\frac{1}{d_1}} $. Next, we can give a similar  estimate of $P_{Z_K}(f, -t \phi) $  as in  the proof of Proposition \ref{relation}. It follows that
$$  P_{Z_K}(f, 0) - t  (\chi^u + \varepsilon )  \le     P_{Z_K}(f, -t \phi )   \le \ P_{Z_K}(f, 0) -  t (\chi^u - \varepsilon ).$$
Proposition \ref{Bowen2} tells us that $P_{Z_K}(f, - t^*  \phi) =0$, here $t^* = \text{dim}_H (Z_K \cap W_{\text{loc}}^u(x,f))$.
Since  $ P_{Z_K}(f, 0) = h(f|_{\Gamma}, Z_K) $, then it follows that
$$\frac{h(f|_{\Gamma}, Z_K)}{\chi^u +\varepsilon}\leq t^* \leq \frac{h(f|_{\Gamma}, Z_K)}{\chi^u -\varepsilon}.$$
By the properties of the entropy and Hausdorff dimension, we complete the proof of Proposition \ref{relation2}.
%$$\frac{h(f|_{\Gamma}, Z)}{\chi^u +\varepsilon}\leq \text{dim}_H (Z \cap W_{loc}^u(x,f)) \leq \frac{h(f|_{\Gamma}, Z)}{\chi^u -\varepsilon}.$$
\end{proof}

\subsubsection{Dimensional approximations by average conformal horseshoes}

The following proposition for diffeomorphisms is parallel to Proposition \ref{repeller} for maps.

\begin{Proposition}\label{ha}
Let $f: M\rightarrow M$ be a $C^{1+\alpha}$ diffeomorphism on a compact Riemannian manifold $M$ and $W\subset M$ be a locally maximal compact $f$-invariant set. Suppose $f$ is average conformal hyperbolic on $W$. Then there exist a sequence of hyperbolic ergodic measures $\{\mu_{n}\}_{n>0}$ on $W$  and a sequence of average conformal hyperbolic sets  $\Gamma_{n} \subset  W$ such that
\begin{displaymath}
\lim_{n\rightarrow \infty}\text{dim}_{H} \Gamma_{n}=\lim_{n\rightarrow \infty}\text{dim}_{H} \mu_{n}=DD(f|_{W}).
\end{displaymath}
Moreover, for any positive integer $n$, there is some $\varepsilon_n>0$ such that $\Gamma_n$ is $(\chi^s(\mu_{n}), \chi^u(\mu_{n})$, $\varepsilon_{n})$ horseshoe,
where $\chi^s(\mu_n)<0$ and $\chi^u(\mu_n)>0$ are the Lyapunov exponents of $\mu_n$ respectively.
\end{Proposition}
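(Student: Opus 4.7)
The plan is to mirror the argument of Proposition~\ref{repeller}, with Katok's theorem for nonuniformly hyperbolic diffeomorphisms (Proposition~\ref{horseshoe}) replacing its expanding-map analogue (Theorem~\ref{Katok'repeller}), and with Proposition~\ref{relation2} together with the local product structure of horseshoes replacing Proposition~\ref{relation}.

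First, by the definition of $DD(f|_W)$, pick for each $n \geq 1$ an ergodic hyperbolic $f$-invariant measure $\mu_n$ on $W$ with $h_{\mu_n}(f|_W) > 0$ and $\dim_H \mu_n \geq DD(f|_W) - 1/n$. Since $f$ is average conformal hyperbolic, each $\mu_n$ is conformal and has exactly two distinct Lyapunov exponents $\chi^s(\mu_n) < 0 < \chi^u(\mu_n)$; the Ledrappier--Young formula then reduces to
\[
\dim_H \mu_n = \frac{h_{\mu_n}(f)}{\chi^u(\mu_n)} + \frac{h_{\mu_n}(f)}{|\chi^s(\mu_n)|}.
\]
Pick $\varepsilon_n \in \bigl(0,\, \min\{\chi^u(\mu_n)/n,\, |\chi^s(\mu_n)|/n,\, 1/n\}\bigr)$ and apply Proposition~\ref{horseshoe} to the pair $(\mu_n, \varepsilon_n)$ to obtain a $(\mu_n, \varepsilon_n)$ average conformal horseshoe $\Gamma_n \subset W$ satisfying $|h(f, \Gamma_n) - h_{\mu_n}(f)| < \varepsilon_n$ and $|\chi^{\ast}(\nu) - \chi^{\ast}(\mu_n)| < \varepsilon_n$ for every $\nu \in \mathcal{M}_{erg}(f|_{\Gamma_n})$ and every $\ast \in \{s,u\}$.

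Next, applying Proposition~\ref{relation2} to the trivially $s$-saturated $f$-invariant set $Z = \Gamma_n$, and its $f^{-1}$-analogue to the stable slice, yields the two-sided estimate
\[
\frac{h_{\mu_n}(f) - \varepsilon_n}{\chi^u(\mu_n) + \varepsilon_n} \leq \dim_H\bigl(\Gamma_n \cap W^u_{\text{loc}}(x, f)\bigr) \leq \frac{h_{\mu_n}(f) + \varepsilon_n}{\chi^u(\mu_n) - \varepsilon_n}
\]
and the analogous one for $\Gamma_n \cap W^s_{\text{loc}}(x,f)$ with $\chi^u(\mu_n)$ replaced by $|\chi^s(\mu_n)|$. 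The local product structure of the horseshoe (so that near any $x \in \Gamma_n$, $\Gamma_n$ is bi-Lipschitz equivalent to $(\Gamma_n \cap W^u_{\text{loc}}(x,f)) \times (\Gamma_n \cap W^s_{\text{loc}}(x,f))$), combined with Lemma~\ref{dimensionlemma}, furnishes the lower bound
\[
\dim_H \Gamma_n \geq \dim_H\bigl(\Gamma_n \cap W^u_{\text{loc}}\bigr) + \dim_H\bigl(\Gamma_n \cap W^s_{\text{loc}}\bigr),
\]
while the matching upper inequality comes from the dimension product formula for average conformal hyperbolic sets established in~\cite{WWCZ}. Letting $\varepsilon_n \to 0$ (equivalently $n \to \infty$), both bounds on $\dim_H \Gamma_n$ and the quantity $\dim_H \mu_n$ converge to the same limit, which by construction is $DD(f|_W)$.

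The main obstacle I foresee is justifying the upper inequality $\dim_H \Gamma_n \leq \dim_H(\Gamma_n \cap W^u_{\text{loc}}) + \dim_H(\Gamma_n \cap W^s_{\text{loc}})$. For general product-like Cantor sets this can fail, but for average conformal horseshoes it holds by a box-counting argument using Markov partitions, as in~\cite{WWCZ}. A more self-contained alternative is to combine Proposition~\ref{Bowen2} (Bowen's equation for the unstable slice) with the dual Bowen equation for the stable slice, and then use a covering by dynamical boxes to transfer these dimensions to $\Gamma_n$ directly; this is the step where the average conformality hypothesis is used in an essential way.
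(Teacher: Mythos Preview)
Your proposal is correct and follows essentially the same route as the paper: pick $\mu_n$ approximating $DD(f|_W)$, apply Proposition~\ref{horseshoe} to get horseshoes $\Gamma_n$, bound the slice dimensions via Proposition~\ref{relation2}, and sandwich $\dim_H\Gamma_n$ using the product formula from~\cite{WWCZ}. Two small points of divergence are worth noting. First, for the lower bound on $\dim_H\Gamma_n$ the paper does not go through Lemma~\ref{dimensionlemma}; it simply invokes Theorem~A of~\cite{WWCZ}, which already gives the \emph{equality} $\dim_H\Gamma_n=\dim_H(\Gamma_n\cap W^u_{\mathrm{loc}})+\dim_H(\Gamma_n\cap W^s_{\mathrm{loc}})$, and then uses the slice lower bounds from Proposition~\ref{horseshoe}(c) directly. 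Second, your claim that $\Gamma_n$ is locally \emph{bi-Lipschitz} to the product of its slices is too strong: for hyperbolic sets the local product identification is in general only H\"older, and what is actually available for average conformal horseshoes (Theorem~3.3 in~\cite{WWCZ}) is that the holonomies are $(C_r,r)$-H\"older for every $r<1$, which still preserves Hausdorff dimension. Either cite this, or bypass the issue entirely by using the equality from Theorem~A of~\cite{WWCZ} as the paper does.
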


\begin{proof}
By the definition of  $DD(f|_{W})$ and $f$ is average conformal hyperbolic on $W$, there exist a sequence of hyperbolic ergodic measures  $\{\mu_{n}\}_{n>0}$ such that $h_{\mu_{n}}(f)>0$ and
$$\lim_{n\rightarrow \infty} \text{dim}_{H} \mu_{n} = DD(f|_{W}).$$
Choose $\varepsilon_n\in\left(0, \min\{-\chi^{s}(\mu_{n}), \chi^{u}(\mu_{n}),\frac{1}{n}\}\right)$. It follows from Proposition \ref{horseshoe} that there exist a sequence of  $(\chi^s(\mu_{n}),  \ \chi^u(\mu_{n}), \ \varepsilon_{n})$ average conformal horseshoes  $\{\Gamma_{n}\}_{n}$ contained in $W$ such that
$$h_{\mu_{n}}(f|_{W})-\varepsilon_{n}\leq h(f|_{\Gamma_{n}})\leq h_{\mu_{n}}(f|_{W})+\varepsilon_{n},$$
and $| \chi^u(\mu_n) - \chi^u(\nu_n) |  < \varepsilon_n, \, \mbox{ and } \,  | \chi^s(\mu_n) - \chi^s(\nu_n) | < \varepsilon_n$ for every $\nu_n\in\mathcal{M}_{erg}(f|_{\Gamma_n})$.
Therefore Proposition \ref{relation2} tells us that
$$\frac{h(f|_{\Gamma_n})}{\chi^u(\mu_n) +\varepsilon_n}\leq \text{dim}_H (\Gamma_n \cap W_{\text{loc}}^u(x,f)) \leq \frac{h(f|_{\Gamma_n})}{\chi^u(\mu_n) -\varepsilon_n}.$$
Similarly, we can prove that
$$ \text{dim}_H (\Gamma_n \cap W_{\text{loc}}^s(x,f)) \leq \frac{h(f|_{\Gamma_n})}{-\chi^s(\mu_n) -\varepsilon_n}.
$$
By Theorem $A$ in \cite{WWCZ}, it has
$$ \text{dim}_{H} \Gamma_{n} =\text{dim}_{H}(W^{u}_{\text{loc}}(x, f)\cap \Gamma_{n}) +\text{dim}_{H}(W^{s}_{\text{loc}}(x, f)\cap \Gamma_{n}).$$
Therefore
\begin{align*}
 \text{dim}_{H} \Gamma_{n} &\leq \frac{h(f|_{\Gamma_n})}{\chi^{u}(\mu_{n})-\varepsilon_{n}}+\frac{h(f|_{\Gamma_n})}{-\chi^{s}(\mu_{n})-\varepsilon_{n}}\\
 & \leq \frac{h_{\mu_n}(f|_{W}) + \varepsilon}{\chi^{u}(\mu_{n})-\varepsilon_{n}}+\frac{h_{\mu_n}(f|_{W}) + \varepsilon}{-\chi^{s}(\mu_{n})-\varepsilon_{n}}.
 \end{align*}
On the other hand, it follows from Proposition \ref{horseshoe} that
$$
\text{dim}_{H} \Gamma_{n} \geq \frac{h_{\mu_{n}}(f|_{W})}{-\chi^{s}(\mu_{n})}+\frac{h_{\mu_{n}}(f|_{W})}{\chi^{u}(\mu_{n})}-2\delta(\varepsilon_{n})=\text{dim}_{H}\mu_{n}-2\delta(\varepsilon_{n}).$$
Letting  $n$ tend to infinity, one has
$$\lim_{n\rightarrow \infty}\text{dim}_{H} \Gamma_{n}=  \lim_{n\rightarrow \infty}\text{dim}_{H} \mu_{n} = DD(f|_{W}),$$
which implies the desired result.
\end{proof}

\begin{Rem}\label{rem3}
Actually, we can modify the proof above slightly to prove that given a hyperbolic ergodic measure $\mu$ with positive entropy, there is a sequence of average conformal hyperbolic sets $\Gamma_{n}$ such that
\begin{align*}
\lim_{n\rightarrow \infty} \text{dim}_{H} \Gamma_{n}=\text{dim}_{H} \mu.
\end{align*}
\end{Rem}
\iffalse

\subsubsection{Entropy on the intersection of the s-saturated invariant set with the unstable manifold}

The following proposition was proved  in  \cite{CG2} for $C^{1}$ surface diffeomorphisms. We prove the  result  holds  for diffeomorphisms on arbitrary dimensional manifold.

\begin{Proposition}\label{unstable}
Let  $M$ be a  $d$-dimensional  $C^{\infty}$ compact Riemannian manifold and $f: M\rightarrow M$ be a $C^{1}$ diffeomorphism. Suppose $W\subset M$ is a locally maximal compact $f$-invariant set. Suppose  $\Gamma\subset W$ is a basic set for  $f$ and $B\subset \Gamma$ is an  $s-saturated$ invariant set. Then for any  $x\in B$, we have  $h(f|_{\Gamma}, B\cap W^{u}_{\text{loc}}(x, f))=h(f|_{\Gamma}, B)$.
\end{Proposition}
\fi

\subsubsection{Entropy on the limit exceptional set}

For each $p\in \mathbb{N}$, denote the space of two-sided infinite sequences of $p$ symbols by
$$\Sigma_{p}=\{\underline i=(\cdots i_{-1}i_0i_1 \cdots)\}=\{1, 2, ..., p\}^\mathbb{Z}.$$
We endow the space $\Sigma_p$ with the metric
$$d(\underline i, \underline i')=\sum_{k=-\infty}^{+\infty}\frac{|i_k - i_k'|}{2^{|k|}},$$
where $\underline i=(\cdots i_{-1}i_0i_1\cdots)$ and $\underline i'=(\cdots i_{-1}'i_0'i_1'\cdots)$. It induces the compact topology on $\Sigma_p$ with cylinders to be disjoint open subsets. The two-sided shift $\sigma: \Sigma_{p} \rightarrow \Sigma_{p}$ is defined by $\sigma(\underline i)_k=i_{k+1}$. %It is an expansive homeomorphism.
We denote by $\Sigma_p^+$ and $\Sigma_p^-$ respectively the sets of right-sided and left-sided infinite sequences obtained from $\Sigma_p$. We consider the one-sided shifts $\sigma^+: \Sigma_p^+\to \Sigma_p^+$ and $\sigma^-: \Sigma_p^- \to \Sigma_p^-$ defined by
$$\sigma^+(i_0i_1\cdots)=(i_1i_2\cdots) \text{ and } \sigma^-(\cdots i_{-1}i_{0})=(\cdots i_{-2}i_{-1}).$$
Let $\pi^+: \Sigma_p\to\Sigma_p^+$ and $\pi^-: \Sigma_p\to\Sigma_p^-$ be the projections defined respectively by
$$\pi^+(\cdots i_{-1}i_0i_1\cdots) = (i_0i_1\cdots) \text{ and } \pi^-(\cdots i_{-1}i_0i_1\cdots) = (\cdots i_{-1}i_0).$$

Given $n, m\in\mathbb{Z}$ and a finite sequence $(i_ni_{n+1}\cdots i_m)$, a cylinder of length $m-n+1$ is defined by
$$[i_n\cdots i_m]=\{\underline j=(\cdots j_{-1}j_0j_1\cdots)\in\Sigma_p: j_k=i_k \text{ for } k=n,...,m\}.$$
Given a positive integer $n$, let $\Sigma^{+}_{p, n}=\{1, 2, ..., p\}^{n}$. Denote $<U>:=n$ for $U\in \Sigma^{+}_{p, n}$.
Given  $\mathcal{U}\subset \bigcup_{n\geq 1}\Sigma^{+}_{p, n}$, denote
$$I^{+}(\mathcal{U})=\{\underline{i}\in \Sigma^{+}_{p}: \text{ for every } n, m \in\mathbb{N}\cup\{0\} \text{ with } n<m, \text{ we have } (i_{n}\cdots i_{m})\notin \mathcal{U}\}.$$
The following two results were proved by Dolgopyat in~\cite{Do}, which are used in the proof of Proposition \ref{key}.

\begin{Thm}\label{Do}\cite{Do}~
%Given  $M\in \mathbb{N}$, denote  $\Sigma_{M}^{+}$ the one-sided symbolic space with  $M$ symbols and $\sigma^{+}:\Sigma_{M}^{+}\rightarrow \Sigma_{M}^{+}$ the one-sided shift.
If a set $A\subset \Sigma_{p}^{+}$ satisfies  $h(\sigma^{+}, A)<h(\sigma^{+})$, then
$$h(\sigma^{+}, I^{+}_{\sigma^{+}|\Sigma^{+}_{p}}(A))=h(\sigma^{+}).$$
\end{Thm}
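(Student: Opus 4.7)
The plan is to exhibit, for each $\varepsilon>0$, a closed $\sigma^+$-invariant subset $Y_\varepsilon\subset I^+_{\sigma^+|\Sigma_p^+}(A)$ with $h(\sigma^+,Y_\varepsilon)\ge\log p-\varepsilon$. Since the trivial bound $h(\sigma^+,I^+_{\sigma^+|\Sigma_p^+}(A))\le h(\sigma^+)=\log p$ holds and Bowen entropy is monotone in the subset, letting $\varepsilon\to 0$ along a sequence and invoking countable stability of topological entropy will then force $h(\sigma^+,I^+_{\sigma^+|\Sigma_p^+}(A))=\log p$.

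The first step is to extract from $h(\sigma^+,A)<\log p$ an efficient cylinder cover of $A$. Pick $s$ with $h(\sigma^+,A)<s<\log p$ and set $\alpha:=\log p-s>0$. The Carath\'eodory-type construction in the definition of topological entropy on a subset, applied with the clopen one-cylinder partition $\mathcal{A}$ of $\Sigma_p^+$, gives $m_{\mathcal{A},s}(A)=0$; hence for any prescribed $\eta>0$ and any $N_0\in\mathbb{N}$ one has a cover of $A$ by cylinders $\mathcal{C}=\{[w_j]\}_j$ with $|w_j|\ge N_0$ and $\sum_j e^{-s|w_j|}<\eta$. In particular, the collection $\mathcal{W}_n$ of length-$n$ blocks appearing in $\mathcal{C}$ satisfies $|\mathcal{W}_n|\le\eta e^{sn}$. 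I then define
\begin{equation*}
Y_\varepsilon:=\{x\in\Sigma_p^+\,:\,(\sigma^+)^k x\notin[w_j]\text{ for all }k\ge 0\text{ and all }j\},
\end{equation*}
a closed $\sigma^+$-invariant subshift determined by the countable list of forbidden blocks. Since each cylinder is clopen, the forward orbit of any $x\in Y_\varepsilon$ avoids $\bigcup_j [w_j]$, so $\omega_{\sigma^+}(x)$ avoids this union as well, and because $A\subset\bigcup_j[w_j]$ we conclude $Y_\varepsilon\subset I^+_{\sigma^+|\Sigma_p^+}(A)$.

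The technical heart of the proof is the lower bound $h(\sigma^+,Y_\varepsilon)\ge\log p-\varepsilon$, which amounts to controlling the number $b_n$ of length-$n$ words that contain no $w_j$ as a subword. The na\"{\i}ve union bound
\begin{equation*}
p^n-b_n\le\sum_{m\ge N_0}(n-m+1)\,|\mathcal{W}_m|\,p^{n-m}\le\eta\,p^n\sum_{m\ge N_0}(n-m+1)\,e^{-\alpha m}
\end{equation*}
carries a linear-in-$n$ factor from the sum over positions and is too crude to yield an exponential lower bound on $b_n$. This is the main obstacle and the technical crux of the argument. The standard remedy is a two-scale construction: first pick $N_0$ so large that $\sum_{m\ge N_0}e^{-\alpha m}$ is tiny; then work at an intermediate scale $L$, count the length-$L$ \emph{safe} blocks (those avoiding every forbidden subword), and concatenate them through a finite-state filter that also kills forbidden patterns straddling block boundaries. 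Carrying this out with the parameters chosen in the order $\varepsilon$, then $N_0$, then $\eta$, then $L$, delivers $b_n\ge e^{(\log p-\varepsilon)n}$ for all large $n$ and closes the proof.
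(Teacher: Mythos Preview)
The paper does not prove this statement; it is quoted verbatim from Dolgopyat \cite{Do} and used as a black box (together with the companion Proposition~\ref{Do2}, also from \cite{Do}). So there is no ``paper's own proof'' to compare against. Your outline---cover $A$ by cylinders $\{[w_j]\}$ of length $\ge N_0$ with $\sum_j e^{-s|w_j|}$ small, pass to the subshift $Y_\varepsilon$ forbidding all the $w_j$, observe $Y_\varepsilon\subset I^+_{\sigma^+|\Sigma_p^+}(A)$, and then lower-bound $h(\sigma^+,Y_\varepsilon)$---is exactly Dolgopyat's strategy, and the entropy lower bound you isolate as the ``technical heart'' is precisely the content of Proposition~\ref{Do2}.

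One point in your sketch deserves care. You propose to concatenate safe $L$-blocks ``through a finite-state filter that also kills forbidden patterns straddling block boundaries.'' A finite-state filter has bounded memory, so it can only police forbidden words of length $\le L$; but the cover $\{[w_j]\}$ may well contain cylinders of every length, and a forbidden word of length $>L$ can appear inside a concatenation of safe $L$-blocks without touching any single boundary window. So the concatenated sequences need not lie in $Y_\varepsilon$. This is not fatal, but it means the two-scale argument as stated is incomplete. Two standard repairs: (i) run the two-scale construction only with the forbidden words of length $\le M$, obtaining a large subshift $Z^{(M)}$, and then show that intersecting $Z^{(M)}$ with the constraint ``no forbidden word of length $>M$'' removes only an $e^{-\alpha M}$-fraction of the entropy (since $\sum_{|w_j|>M}p^{-|w_j|}\le e^{-\alpha M}$); or (ii) bypass the two-scale construction entirely via the first-occurrence recursion $g_n\ge p\,g_{n-1}-\sum_m|\mathcal W_m|\,g_{n-m}$, which yields $G(z)\ge (1-pz+W(z))^{-1}$ and places the smallest positive singularity of $G$ at $z_0=1/p+O(W(1/p))$, giving $h(\sigma^+,Y_\varepsilon)\ge -\log z_0\ge \log p - O(e^{-\alpha N_0})$ directly. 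Either route delivers the function $H(N_0)\to\log p$ that Proposition~\ref{Do2} asserts.
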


\begin{Proposition}\label{Do2}\cite{Do}~
Suppose there is a function ~$H:\mathbb{N}\rightarrow \mathbb{R}$  which satisfies
$$\lim_{n\rightarrow \infty}H(n)=h(\sigma^{+}),$$
 and if for  some $s\in \left(0, h(\sigma^{+})\right)$, $n_{0}\geq 1$, there exists $\mathcal{U}=\{U_{\textit{l}}:U_{\textit{l}}\in \bigcup_{n\geq n_{0}}\Sigma^{+}_{p, n} \}$ with $\sum_{\textit{l}}e^{-s<U_{\textit{l}}>}<1$, then we get
 $$h\left(\sigma^{+}, I^{+}(\mathcal{U})\right)\geq H(n_{0}).$$
\end{Proposition}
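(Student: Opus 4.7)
The plan is to construct a subshift $\Omega\subset I^+(\mathcal{U})$ with $h(\sigma^+,\Omega)\geq H(n_0)$, so that the monotonicity property of topological entropy listed after Definition \ref{deentropy} immediately yields $h(\sigma^+,I^+(\mathcal{U}))\geq H(n_0)$. The subshift $\Omega$ will be the closure of all free concatenations of blocks from a carefully selected family $\mathcal{G}\subset\Sigma^+_{p,n_0}$, whose entropy is $\frac{1}{n_0}\log|\mathcal{G}|$, so the task reduces to showing $|\mathcal{G}|\geq e^{n_0 H(n_0)}$.

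First I would identify the family $\mathcal{G}_0\subset\Sigma^+_{p,n_0}$ of length-$n_0$ blocks that avoid every $U_l\in\mathcal{U}$ with $<U_l>=n_0$. Since every $U_l\in\mathcal{U}$ satisfies $<U_l>\geq n_0$, only the $U_l$ of length exactly $n_0$ can occur as subwords of an $n_0$-block, and each such word removes a single element of $\Sigma^+_{p,n_0}$. Thus the number of excluded blocks is at most $|\{l:<U_l>=n_0\}|$, which, using the trivial bound $1\leq e^{sn_0}e^{-s<U_l>}$, is at most $e^{sn_0}\sum_l e^{-s<U_l>}<e^{sn_0}$. This is negligible compared to $p^{n_0}=e^{n_0 h(\sigma^+)}$ because $s<h(\sigma^+)$. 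Consequently $|\mathcal{G}_0|\geq p^{n_0}(1-e^{-n_0(h(\sigma^+)-s)})$, and its exponential growth rate matches the asymptotic $H(n)\to h(\sigma^+)$.

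Next I would refine $\mathcal{G}_0$ to a subfamily $\mathcal{G}$ so that arbitrary free concatenations of elements of $\mathcal{G}$ avoid every word in $\mathcal{U}$. The only remaining obstruction comes from forbidden words $U_l$ with $<U_l>>n_0$ that straddle junctions of consecutive blocks. I would handle these by iterative exclusion: at step $k\geq 2$, remove from the surviving family any block that could participate in a $k$-fold concatenation containing some $U_l$ with $(k-1)n_0<<U_l>\leq k n_0$ spanning $k-1$ junctions. Each such $U_l$ contributes at most $kn_0\cdot p^{k n_0-<U_l>}$ problematic configurations, and reweighting by $e^{-s<U_l>}$ using $p^{-<U_l>}\leq e^{-s<U_l>}$ turns the cumulative loss into a geometric series controlled by $\sum_l e^{-s<U_l>}<1$. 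An alternative implementation is to insert a fixed short buffer block between consecutive elements of $\mathcal{G}_0$, which affects only constants. The resulting $\mathcal{G}$ still satisfies $|\mathcal{G}|\geq e^{n_0 H(n_0)}$, so $\Omega=\mathcal{G}^{\mathbb{N}}\subset I^+(\mathcal{U})$ has $h(\sigma^+,\Omega)\geq H(n_0)$.

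The main obstacle will be the bookkeeping for forbidden words crossing several junctions: a naive union bound picks up a polynomial-in-$k$ factor that threatens to swamp the exponential count $e^{n_0 H(n_0)}$. The crucial point is that the strict gap $h(\sigma^+)-s>0$ supplies a geometric decay in each summand of the series $\sum_l e^{-s<U_l>}$, which absorbs the polynomial overhead uniformly in the number of junctions; this is precisely why the hypothesis requires $s<h(\sigma^+)$ rather than $s\leq h(\sigma^+)$. With this uniform estimate in hand, the entropy bound $h(\sigma^+,I^+(\mathcal{U}))\geq H(n_0)$ follows from monotonicity.
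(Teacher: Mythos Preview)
The paper does not prove this proposition; it is quoted from Dolgopyat \cite{Do} and used as a black box in the proof of Proposition~\ref{key}. There is therefore no in-paper argument to compare against.

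Your sketch is in the spirit of Dolgopyat's original construction, and the first step (counting $\mathcal{G}_0$) is fine. The refinement step, however, is not correct as written. You propose at stage $k$ to ``remove from the surviving family any block that could participate in a $k$-fold concatenation containing some $U_l$''. But a forbidden word $U_l$ spanning $k$ blocks constrains a specific ordered $k$-tuple $(B_1,\dots,B_k)$, not any single block; deleting individual blocks is both ill-specified (which of the $k$ blocks do you drop?) and potentially far too aggressive, since every block may appear in some bad tuple. The buffer alternative you mention does not resolve this either, as a long $U_l$ can still straddle the buffer.

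The standard fix is to keep all of $\mathcal{G}_0$ as a new alphabet and instead forbid the bad $k$-tuples, obtaining a subshift (of countable type) on $\mathcal{G}_0$ whose entropy one bounds from below. The total weight of forbidden tuples is controlled by
\[
\sum_l n_0\, p^{-\langle U_l\rangle}\;\le\; n_0\, e^{-(h(\sigma^+)-s)n_0}\sum_l e^{-s\langle U_l\rangle}\;<\;n_0\, e^{-(h(\sigma^+)-s)n_0},
\]
using $\langle U_l\rangle\ge n_0$. It is this extra factor $e^{-(h(\sigma^+)-s)n_0}$, not merely the hypothesis $\sum_l e^{-s\langle U_l\rangle}<1$, that forces the entropy defect to vanish as $n_0\to\infty$; the function $H$ is then \emph{defined} as the resulting lower bound, and one checks $H(n_0)\to h(\sigma^+)$. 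Your paragraph on the ``polynomial-in-$k$ factor'' correctly identifies the relevant gain, but the mechanism that exploits it is an entropy estimate for a subshift with forbidden tuples, not a block-removal procedure.
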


%For surface diffeomorphisms, the following proposition was proved  . We prove it for  higher dimension diffeomorphisms  with average conformal property.

Consider a $C^{1+\alpha}$ diffeomorphism $f: M \to M$ of a compact Riemannian manifold $M$. We assume $f$ is average conformal hyperbolic on  a locally maximal compact $f$-invariant subset $W\subset M$. Let $\mu$ be an ergodic $f$-invariant probability measure on $W$ and $A$ be a subset of $W$ such that $\dim_H A < \dim_H\mu$.
For any sufficiently small $\varepsilon>0$,
Proposition~\ref{horseshoe} tells us that there exists a ~$(\chi^s, \chi^u, \varepsilon)$ horseshoe~$\Gamma_{\varepsilon}\subset W$.
The following proposition proves the limit $A\cap \Gamma_\varepsilon$-exceptional set for $f|_{\Gamma_\varepsilon}$ has full topological entropy.
It is the key proposition in the proof of Theorem \ref{main2},
which also extends Campos and Gelfert's result in ~\cite{CG2} to non-conformal case.

\begin{Proposition}\label{key}
Let $f: M\rightarrow M$ be a  $C^{1+\alpha}$ diffeomorphism on a $d$-dimensional compact Riemannian manifold $M$, and $W\subset M$ be a locally maximal compact $f$-invariant set. Suppose $f$ is average conformal hyperbolic on $W$. Let $\mu\in \mathcal{M}_{erg}(f|_{W})$. If  $A\subset W$ satisfies $\dim_{H}A < \dim_{H} \mu$,
then there exists $\varepsilon_{0}>0$ such that for any ~$\varepsilon\in (0, ~\varepsilon_{0})$, and any ~$(\mu, ~\varepsilon)$ horseshoe ~$\Gamma=\Gamma_{\varepsilon}\subset W$, we have ~$$h(f|_{\Gamma}, I^{+}_{f|_{\Gamma}}(A\cap \Gamma))=h(f|_{\Gamma}).$$
\end{Proposition}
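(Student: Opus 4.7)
The strategy is to pull the problem back to the symbolic model of the horseshoe, secure the strict entropy inequality needed to apply Dolgopyat's Theorem \ref{Do} on the one-sided full shift, and then transport the resulting equality back to $f|_{\Gamma}$.

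First, by Proposition \ref{horseshoe} there is a compact $R\subset\Gamma$ with $f^{N}(R)=R$ and a homeomorphism $\Phi:\Sigma_{p}\to R$ conjugating $f^{N}|_{R}$ to the full two-sided shift $(\Sigma_{p},\sigma)$, with $\Gamma=\bigcup_{k=0}^{N-1}f^{k}(R)$. Setting $A':=R\cap\bigcup_{k=0}^{N-1}f^{-k}(A\cap\Gamma)$, the decomposition $\omega_{f}(x)=\bigcup_{k=0}^{N-1}f^{k}(\omega_{f^{N}}(x))$ for $x\in R$ gives
\[I^{+}_{f|_{\Gamma}}(A\cap\Gamma)\cap R=I^{+}_{f^{N}|_{R}}(A').\]
Since $\Gamma=\bigcup_{k}f^{k}(R)$, the countable stability of topological entropy together with $h(f^{N},\cdot)=Nh(f,\cdot)$ shows that the target equality reduces to $h(f^{N}|_{R},I^{+}_{f^{N}|_{R}}(A'))=h(f^{N}|_{R})$. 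Conjugating by $\Phi$ and using that two points of $R$ on a common local stable leaf of $f^{N}$ share the same $\omega_{f^{N}}$-limit---which in the symbolic model is precisely the statement that $\omega_{\sigma}$-limits depend only on the image under $\pi^{+}$---further reduces this to
\[h(\sigma^{+},I^{+}_{\sigma^{+}|_{\Sigma_{p}^{+}}}(X^{+}))=h(\sigma^{+}),\qquad X^{+}:=\pi^{+}(\Phi^{-1}(A')),\]
since $\pi^{+}$ identifies exactly the local-stable equivalence classes of the Markov model and has zero fibre entropy.

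Second, by Theorem \ref{Do} the preceding display follows from the strict inequality $h(\sigma^{+},X^{+})<h(\sigma^{+})$. Pulling back through $\pi^{+}$ and $\Phi$, this is equivalent to $h(f|_{\Gamma},B)<h(f|_{\Gamma})$, where $B$ is the stable saturation of $A'$ in $\Gamma$ adjusted by finitely many iterates so as to be $s$-saturated and $f$-invariant (an adjustment that does not alter the Hausdorff dimension in the $C^{1+\alpha}$ regime). Proposition \ref{relation2} applied inside the $(\chi^{s}(\mu),\chi^{u}(\mu),\varepsilon)$ average conformal horseshoe $\Gamma$ gives
\[h(f|_{\Gamma},B)\le(\chi^{u}(\mu)+\varepsilon)\dim_{H}\bigl(B\cap W^{u}_{\text{loc}}(x,f)\bigr).\]
The slice $B\cap W^{u}_{\text{loc}}(x,f)$ is the image of $A'$ under the stable-holonomy projection to $W^{u}_{\text{loc}}(x,f)$; in the $C^{1+\alpha}$ average conformal setting this holonomy is H\"older continuous with exponent approaching $1$ as $\varepsilon\to 0$, so by Lemma \ref{H} its Hausdorff dimension is controlled in terms of $\dim_{H}A'\le\dim_{H}A$. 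Combining this bound with the Young formula $\dim_{H}\mu=h_{\mu}(f)/\chi^{u}(\mu)+h_{\mu}(f)/|\chi^{s}(\mu)|$ for conformal hyperbolic measures, the product formula $\dim_{H}\Gamma=\dim_{H}(\Gamma\cap W^{u}_{\text{loc}})+\dim_{H}(\Gamma\cap W^{s}_{\text{loc}})$ from \cite{WWCZ}, and the closeness estimates $|h(f|_{\Gamma})-h_{\mu}(f)|<\varepsilon$ and $|\chi^{u/s}(\nu)-\chi^{u/s}(\mu)|<\varepsilon$ for $\nu\in\mathcal{M}_{erg}(f|_{\Gamma})$ supplied by Proposition \ref{horseshoe}, one chooses $\varepsilon_{0}>0$ so small that the gap $\dim_{H}\mu-\dim_{H}A>0$ forces $h(f|_{\Gamma},B)<h(f|_{\Gamma})$.

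Third, Theorem \ref{Do} then delivers $h(\sigma^{+},I^{+}_{\sigma^{+}}(X^{+}))=h(\sigma^{+})$, which transports back through $\pi^{+}$, $\Phi$ and the identification of the first paragraph to give the proposition. The main obstacle is the strict entropy bound in the second paragraph: in the genuinely conformal case of \cite{CG2} the stable holonomy is Lipschitz and the bookkeeping is nearly tautological, whereas here one must let the H\"older loss in the holonomy, together with the $\varepsilon$-losses in the Lyapunov exponents and in the entropy approximation on $\Gamma$, all conspire consistently as $\varepsilon\to 0$, which is precisely where the quantitative statements of Propositions \ref{horseshoe} and \ref{relation2}, together with the product-structure dimension formula from \cite{WWCZ}, become indispensable.
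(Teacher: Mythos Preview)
Your second paragraph contains a genuine gap. Even granting that the projection along stable leaves is H\"older with exponent $r\to 1$, the chain
\[
h(f|_{\Gamma},B)\le(\chi^{u}(\mu)+\varepsilon)\dim_{H}\bigl(B\cap W^{u}_{\mathrm{loc}}(x,f)\bigr)\le(\chi^{u}(\mu)+\varepsilon)\,r^{-1}\dim_{H}A
\]
only forces $h(f|_{\Gamma},B)<h(f|_{\Gamma})\approx h_{\mu}(f)$ when $\dim_{H}A<h_{\mu}(f)/\chi^{u}(\mu)$, i.e.\ when $\dim_{H}A$ is below the \emph{unstable} dimension of $\mu$. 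The hypothesis, however, only gives $\dim_{H}A<\dim_{H}\mu=h_{\mu}(f)/\chi^{u}(\mu)+h_{\mu}(f)/|\chi^{s}(\mu)|$, which is strictly larger. Concretely, take $A=\Gamma\cap W^{u}_{\mathrm{loc}}(x_{0},f)$ for some $x_{0}$: then $\dim_{H}A\approx h_{\mu}(f)/\chi^{u}(\mu)<\dim_{H}\mu$, yet the stable saturation $B$ fills an entire Markov rectangle, so $\dim_{H}(B\cap W^{u}_{\mathrm{loc}}(x,f))=\dim_{H}(\Gamma\cap W^{u}_{\mathrm{loc}}(x,f))$ and Proposition~\ref{relation2} returns $h(f|_{\Gamma},B)=h(f|_{\Gamma})$. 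The strict inequality you need for Theorem~\ref{Do} simply fails in this range, and no amount of shrinking $\varepsilon$ repairs it. Projecting to the unstable leaf throws away the stable contribution to $\dim_{H}\mu$, which is precisely the quantity separating your bound from the target.

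The paper does not go through Theorem~\ref{Do} at all. It builds a Moran-type cover of $\Gamma_{\varepsilon}$ by \emph{two-sided} cylinders $[i_{-n_{\ell}^{-}}\cdots i_{-1}.i_{0}\cdots i_{n_{\ell}^{+}-1}]$, with the depths $n_{\ell}^{+},n_{\ell}^{-}$ chosen so that the corresponding rectangle has diameter $\approx r$ in both directions; this forces $n_{\ell}^{+}\chi^{u}\approx n_{\ell}^{-}|\chi^{s}|$. The Hausdorff-measure estimate $\sum_{\ell}|R_{\ell}|^{\theta}\le 1$ then translates, via $n_{\ell}^{+}+n_{\ell}^{-}\approx n_{\ell}^{+}(1+\chi^{u}/|\chi^{s}|)$ and the Young formula, into $\sum_{\ell}e^{-s(n_{\ell}^{+}+n_{\ell}^{-})}<1$ for some $s<h(\sigma^{+})$. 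One then invokes Dolgopyat's Proposition~\ref{Do2} (not Theorem~\ref{Do}) on the family of one-sided words $U_{\ell}=\pi^{+}(\sigma^{-n_{\ell}^{-}}C_{\ell})$ of length $n_{\ell}^{+}+n_{\ell}^{-}$, and checks $\pi(I(\mathcal U))\subset I^{+}_{f|_{\Gamma}}(A\cap\Gamma)$. The point is that shifting the two-sided cylinder to a one-sided word of length $n_{\ell}^{+}+n_{\ell}^{-}$ is exactly what recovers the stable part of $\dim_{H}\mu$ that your projection discards.
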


\begin{proof}
Let $\chi^s$ and $\chi^u$ be the negative and positive Lyapunov exponents of $\mu$ respectively. Wang and Cao \cite{WC} proved that
\begin{equation}\label{LEformular}
\dim_H\mu=h_\mu(f)\left(\frac{1}{\chi^u}-\frac{1}{\chi^s}\right).
\end{equation}
Since $\dim_{H}\mu > 0$, then  $h_{\mu}(f)>0$. Choose $\theta>0$ satisfying ~$\dim_{H}A<\theta<\dim_{H}\mu$. Denote
\begin{equation}\label{delta}
\delta\triangleq 1-\frac{\theta}{\dim_{H}\mu}
\end{equation}
and fix some $\varepsilon_1>0$ small enough such that we have
\begin{eqnarray}\label{varepsilon}
\begin{aligned}
&\frac{2\varepsilon_1}{|\chi^s|}<\frac\delta4,\quad \frac{2\varepsilon_1\theta \left(1+\frac{2\varepsilon_1}{\min\{\chi^u, -\chi^s\} - \varepsilon_1}\right)}{h_\mu(f)}<\frac{\delta^2}{8},\quad \varepsilon_1<h_\mu(f)\frac{\delta^2}{8}\quad \text{and }\\
&(1-\delta)\left(1+\frac{2\varepsilon_1}{\min\{\chi^u, -\chi^s\} - \varepsilon_1}\right) <1-\frac\delta2.
\end{aligned}
\end{eqnarray}
Take $\varepsilon_0=\min\left\{\varepsilon_1, \frac{h_\mu(f)}{2}\right\}$. %\displaystyle\frac{\log\min\{\sup_{x\in M}\|D_xf\|, \inf_{x\in M}(m(D_xf))^{-1}\}}{2}\right\}$.

%Fix  $\varepsilon_{0}>0$ small enough, and choose~$\varepsilon>0$ satisfying~$\varepsilon<\varepsilon_{0}$.
Given $\varepsilon\in(0, \varepsilon_0)$. By Proposition~\ref{horseshoe},
there exists a ~$(\chi^s, \chi^u, \varepsilon)$ horseshoe~$\Gamma_{\varepsilon}\subset W$. Let~$d_{1}=\dim~E^{s}_{x}$ and $d_{2}=\dim~E^{u}_{x}$.
Denote
$$\psi^{-}(x)\triangleq \log|\det (D_xf|_{E^{s}_{x}})|^{\frac{1}{d_{1}}}, \quad \psi^{+}(x)\triangleq -\log|\det (D_xf|_{E^{u}_{x}})|^{\frac{1}{d_{2}}}.$$
%Since
%$$m(D_xf|_{E^{s}_{x}})\leq |\det (D_xf|_{E^{s}_{x}})|^{\frac{1}{d_{1}}}\leq \|D_xf|_{E^{s}_{x}}\|$$
%and
%$$m(D_xf|_{E^{u}_{x}})\leq |\det (D_xf|_{E^{u}_{x}})|^{\frac{1}{d_{2}}}\leq ||D_xf|_{E^{u}_{x}}||,$$
%we know that ~$\psi^{-}(x)$ and ~$\psi^{+}(x)$ are always negative.
By the continuity of ~$\psi^{-}(x)$ and ~$\psi^{+}(x)$ and  the compactness of $\Gamma_{\varepsilon}$,  there exists a positive number ~$C_{0}$ such that
\begeq\label{1}
-C_{0}\leq \min\{\psi^{-}, \psi^{+}\}.
\eeq
For every positive integer $n\geq1$, denote
$$S_{-n}\psi^-=\psi^- \circ f^{-1}+\psi^- \circ f^{-2}+...+\psi^- \circ f^{-n}$$
and
$$S_{n}\psi^+=\psi^+ +\psi^+ \circ f+...+\psi^+ \circ f^{n-1}.$$
Wang, Wang, Cao and Zhao \cite{WWCZ} proved that
\begin{equation}\label{8}
\lim_{n\rightarrow \infty}\frac{1}{n}\left(\log||D_xf^{n}|_{E^{s}_{x}}||-\log m(D_xf^{n}|_{E^{s}_{x}})\right)=0
\end{equation}
and
\begin{equation}\label{10}
\lim_{n\rightarrow \infty}\frac{1}{n}\left(\log||D_xf^{n}|_{E^{u}_{x}}||-\log m(D_xf^{n}|_{E^{u}_{x}})\right)=0
\end{equation}
uniformly for ~$x\in \Gamma_{\varepsilon}$.
By the hyperbolicity of $f|_{\Gamma_\varepsilon}$, the definition of the ~$(\chi^{s}, \chi^{u}, \varepsilon)$ horseshoe, (\ref{8}) and (\ref{10}), one has there exists~$N_{0}=N_{0}(\varepsilon)\geq 1$ such that for any ~$x\in \Gamma_{\varepsilon}$ and any ~$n\geq N_{0}$, we have
\begeq\label{2}
\left|\frac{1}{n}S_{-n}\psi^{-}(x)-\chi^{s}(\mu)\right|<2\varepsilon,\quad\quad \left|\frac{1}{n}S_{n}\psi^{+}(x)+\chi^{u}(\mu)\right|<2\varepsilon,
\eeq
%By ~(\ref{8}) and (\ref{10}), for any ~$\rho>0$,  there exists a positive integer, we still denote it by ~$N_{0}$, such that for any ~$n\geq N_{0}$, any ~$x\in \Gamma_{\varepsilon}$, we have
\begin{align}\label{9}
1\leq \frac{\|D_xf^{n}|_{E^{u}_{x}}\|}{|\det (D_xf^{n}|_{E^{u}_{x}})|^{\frac{1}{d_{2}}}}\leq e^{n\varepsilon}, \quad\quad 1\leq \frac{\|D_xf^{n}|_{E^{s}_{x}}\|}{|\det (D_xf^{n}|_{E^{s}_{x}})|^{\frac{1}{d_{1}}}}\leq e^{n\varepsilon},
\end{align}
and
\begin{align}\label{smallmol}
1\leq \frac{|\det (D_xf^{n}|_{E^{u}_{x}})|^{\frac{1}{d_{2}}}}{m(D_xf^{n}|_{E^{u}_{x}})}\leq e^{n\varepsilon}, \quad\quad 1\leq \frac{|\det (D_xf^{n}|_{E^{s}_{x}})|^{\frac{1}{d_{1}}}}{m(D_xf^{n}|_{E^{s}_{x}})}\leq e^{n\varepsilon}.
\end{align}
We also require that
\begin{equation}\label{N0}
\frac{C_0}{|\chi^s| \cdot 2N_0}<\frac{\delta}{4}.
\end{equation}
For the above $\varepsilon>0$ and $N_0=N_0(\varepsilon)$, there exists $\delta>0$ such that
\begin{equation}\label{continuousofdf}
e^{-N_0\varepsilon} \leq \frac{\|D_yf^{N_0}|_{E^i_y}\|}{\|D_zf^{N_0}|_{E^i_z}\|} \leq e^{N_0\varepsilon}, \quad e^{-N_0\varepsilon} \leq \frac{m(D_yf^{N_0}|_{E^i_y})}{m(D_zf^{N_0}|_{E^i_z})} \leq e^{N_0\varepsilon}
\end{equation}
for every $x\in\Gamma_\varepsilon$, every $y, z\in W^u_{\text{loc}}(x, f^{N_0})$ with $d_u(y, z)\leq\delta$ and $i\in\{u, s\}$.
Notice that $f|_{\Gamma_{\varepsilon}}$ is conjugate to the shift ~$\sigma|_{\Sigma_{p}}$ for some ~$p\in \mathbb{N}$, i.e. there exists a homeomorphism  $\pi:\Sigma_{p}\rightarrow \Gamma_{\varepsilon}$
such that~$\pi\circ \sigma=f|_{\Gamma_{\varepsilon}}\circ \pi$.
Note that
\begin{equation}\label{measurableentropy}
h(\sigma, \Sigma_p)=h(f, \Gamma_\varepsilon)\geq h_\mu(f)-\varepsilon.
\end{equation}
Let $\mathcal{P}=\{R_1, ..., R_p\}$ be a Markov partition of $\Gamma_\varepsilon$ (with respect to $f$). We also assume the diameter of the Markov partition is smaller than $\delta$.

Fix $r\in (0, 1)$ small enough. Since ~$\psi^{+}(x)$ is always negative and uniformly bounded, we know that for any ~$\underline{i}\in \Sigma_{p}$, there exists a minimal positive integer ~$n=n^+(\underline{i}, r)$ such that
\begin{equation}\label{estimationofn}
S_{n}\psi^{+}(\pi(\underline{i}))<\log r\leq S_{n-1}\psi^{+}(\pi(\underline{i})).
\end{equation}
Combining with (\ref{1}), one has
\begin{equation}\label{estimationofpsi+}
S_{n}\psi^{+}(\pi(\underline{i}))<\log r\leq S_{n}\psi^{+}(\pi(\underline{i}))+C_{0}.
\end{equation}
%By ~(\ref{9}), we have
%$$\log r-C_{0}-n\rho\leq -\log \|Df^{n}|_{E^{u}_{\pi(\xi)}}\|<\log r.$$
Since ~$\psi^{+}$ is continuous, we know that $n^+(\underline{i}, r)$ is locally constant. Thus by the compactness of $\Sigma_{p}$, there exist integers~$N_{1}^{+}(r)\leq N_{2}^{+}(r)$, depending only on ~$r$, such that for any ~$\underline{i} \in \Sigma_{p}$, we have ~$N_{1}^{+}(r)\leq n^+(\underline{i}, r)\leq N_{2}^{+}(r)$.
Given $n\in\{N_1^+(r), N_1^+(r)+1, \cdots, N_2^+(r)\}$, let
\begin{eqnarray*}
\begin{aligned}
\tilde{C}^+_n(r)=\{[.i_0i_1\cdots i_{n-1}]\subset \Sigma_p: &\text{ there is some } \underline{i}\in[.i_0i_1\cdots i_{n-1}]\\
 &\text{ such that } n^+(\underline{i}, r)=n\}
\end{aligned}
\end{eqnarray*}
and $l_n\triangleq \text{ card } \tilde{C}^+_n(r)$. Analogously we consider the potential function $\psi^-$. For every $\underline{i}\in\Sigma_M$, let $m=m^-(\underline{i}, r)\geq 1$ be the positive integer such that
\begin{equation}\label{estimationofm}
S_{-m}\psi^-(\pi(\underline{i}))<\log r \leq S_{-(m-1)}\psi^-(\pi(\underline{i})).
\end{equation}
Again $m^-(\underline{i}, r)$ is well defined and (\ref{1}) implies
\begin{equation}\label{estimationofpsi-}
S_{-m}\psi^{-}(\pi(\underline{i}))<\log r\leq S_{-m}\psi^{-}(\pi(\underline{i}))+C_{0}.
\end{equation}

We first define a partition $C^+(r)$ of $\Sigma_p^+$ recursively. We start with index $n=N_1^+$. Let
\begin{equation*}
S^+(N_1^+)=\Sigma_p \quad \text{ and }\quad  C_{N_1^+}^+(r)=\tilde{C}_{N_1^+}^+(r).
\end{equation*}
Assuming that all objects are already defined for each $k=N_1^+, \cdots, n$ and $S^+(n)\neq\varnothing$, let
\begin{equation*}
S^+(n+1)\triangleq S^+(n)\setminus \{C: C\in\tilde{C}_{n}^+(r)\}
\end{equation*}
and
\begin{equation*}
C^+_{n+1}(r)\triangleq \{C: C\in\tilde{C}_{n+1}^+(r) \text{ and } C\subseteq S^+(n+1)\}.
\end{equation*}
Since $n^{+}(\underbar{\textit{i}},r)\leq N_2^+$ for each $\underbar{\textit{i}}\in\Sigma_{p}$, we eventually arrive at $S^+(n^*)=\varnothing$ for some $n^*\leq N_2^+$. Then we stop this recursion and define the family
\begin{equation*}
C^+(r)\triangleq \{\pi^+(C): C\in C_n^+(r) \text{ for } n=N_1^+, \cdots, n^*\}.
\end{equation*}
Therefore $C^+(r)$ partitions $\Sigma_p^+$ into pairwise disjoint cylinders. We construct analogously $C^-(r)$ wich partitions $\Sigma_p^-$ into pairwise disjoint cylinders. Denote by $N_1^-=N_1^-(r)$ and $N_2^-=N_2^-(r)$ the correspondingly defined positive integers.

Finally, concatenating the symbolic sequence cylinders, define
\begin{eqnarray*}
\begin{aligned}
C(r)\triangleq \{[i_{-m}\cdots i_{-1}.i_0i_1 \cdots i_{n-1}]:\ &[i_{-m}\cdots i_{-2}i_{-1}.]\in C^-(r) \text{ and }\\
 &[.i_0i_1\cdots i_{n-1}]\in C^+(r)\},
\end{aligned}
\end{eqnarray*}
which partitions $\Sigma_p$ into pairwise disjoint cylinders. Set
$$N_1(r)\triangleq \min\{N_1^-(r), N_1^+(r)\}\quad \text{ and }\quad N_2(r)\triangleq \min\{N_2^-(r), N_2^+(r)\}.$$
Note that $C(r)$ has the following properties:
\begin{itemize}
\item [(a)] It is a family of pairwise disjoint cylinders, each of which is of length between $2N_1(r)$ and $2N_2(r)$.
\item [(b)] Each cylinder of length $k=m+n$ in $C(r)$ contains a sequence $\underline{i} \in \Sigma_p$ satisfying $n^+(\underline{i}, r)$, $n^-(\underline{i}, r)\in\{N_1(r), \cdots, N_2(r)\}$.
\end{itemize}

Given $\rho>0$ put
$$\mathcal{C}_\rho\triangleq \bigcup_{r\in(0, \rho)}C(r).$$
The conjugation map $\pi: \Sigma_p\to\Gamma_\varepsilon$ sends each cylinder $C=[i_{-m}\cdots i_{-1}.i_0i_1\cdots i_{n-1}]\in C(r)$ into a Markov rectangle
$$R_{i_{-m}\cdots i_{n-1}}=\pi(C)=\bigcap_{k=-m}^{n-1}f^{-k}(R_{i_k}).$$
Denote $\mathcal{R}_\rho\triangleq \displaystyle\bigcup_{r\in(0, \rho)}R(r)$, where $R(r)=\{\pi(C): C\in C(r)\}$.

\begin{Claim}\label{propertyofcover}
The family $\mathcal{R}_\rho$ satisfies the following properties:
\begin{itemize}
  \item [(P1)] We have $\varnothing\in \mathcal{R}_\rho$ and $|R|>0$ for every nonempty $R\in \mathcal{R}_\rho$, where $| \cdot |$ denotes the diameter of a set.
  \item [(P2)] For every $\varepsilon>0$, there exists a finite or countable subcollection $\mathcal{F}'\subset \mathcal{R}_\rho$ such that $\Gamma_\varepsilon\subset \displaystyle\bigcup_{R\in\mathcal{F}'} R$ and $|R|\leq\varepsilon$ for every $R\in\mathcal{F}'$.
  \item [(P3)] There exist positive constants $\kappa_1$, $\kappa_2$ and $\kappa_3$ such that for every $R\in \mathcal{R}_\rho$, we can choose $r\in(0, \rho)$ satisfying $R\in R(r)$ and
  $$\kappa_1 \cdot r^{1+\kappa_3\varepsilon} \leq |R| \leq \kappa_2 \cdot r^{1-\kappa_3\varepsilon}.$$
\end{itemize}
\end{Claim}

\begin{proof}
The conclusions (P1) and (P2) are obvious. We only prove (P3).
For every $R\in \mathcal{R}_\rho$, there exist $r\in(0, \rho)$ and $C=[i_{-m}\cdots i_{-1}.i_0i_1\cdots i_{n-1}]\in C(r)$ such that $R=\pi(C)=\displaystyle\bigcap_{k=-m}^{n-1}f^{-k}(R_{i_k})$. It follows from ($b$) that we can choose some $\underline{i}\in C$ such that $n^+(\underline{i}, r)=n$ and $n^-(\underline{i}, r)=m$.
Let
\begin{eqnarray*}
\begin{aligned}
\tilde{\kappa}_0&=\displaystyle\min_{z\in M}\left\{m(D_zf), m(D_zf^2), ..., m(D_zf^{N_0-1})\right\},\\
\tilde{\kappa}_1&=\displaystyle\max_{z\in\Gamma_\varepsilon}\left\{(|\det(D_zf|_{E^s_z})|^{\frac{1}{d_1}})^{-1}, |\det(D_zf|_{E^u_z})|^{\frac{1}{d_2}}\right\},\\
\tilde{\kappa}_2&=\displaystyle\min_{z\in\Gamma_\varepsilon}\left\{(|\det(D_zf|_{E^s_z})|^{\frac{1}{d_1}})^{-1}, |\det(D_zf|_{E^u_z})|^{\frac{1}{d_2}}\right\}.
\end{aligned}
\end{eqnarray*}
Denote $x=\pi(\underline{i})$.

For every $y\in R$, by (\ref{continuousofdf}), (\ref{smallmol}) and (\ref{estimationofn}), we obtain there is $\xi\in W^u_{\text{loc}}(x, f)$ such that
\begin{eqnarray}\label{unstablees}
\begin{aligned}
d_u(f^{n-1}x, f^{n-1}y)&\geq m(D_\xi f^{n-1}|_{E^u_\xi}) \cdot d_u(x,y)\\
&\geq \prod_{k=0}^{p-1} m(D_{f^{kN_0}\xi}f^{N_0}|_{E^u_{f^{kN_0}\xi}}) \cdot \tilde{\kappa}_0 \cdot d_u(x, y)\\
&\geq \prod_{k=0}^{p-1} m(D_{f^{kN_0}x}f^{N_0}|_{E^u_{f^{kN_0}x}}) \cdot e^{-n\varepsilon} \cdot \tilde{\kappa}_0 \cdot d_u(x, y)\\
&\geq \prod_{k=0}^{p-1} |\det(D_{f^{kN_0}x}f^{N_0}|_{E^u_{f^{kN_0}x}})|^{\frac{1}{d_2}} \cdot e^{-2n\varepsilon} \cdot \tilde{\kappa}_0 \cdot d_u(x, y)\\
&\geq |\det (D_xf^n|_{E^u_x})|^{\frac{1}{d_2}} \cdot e^{-2n\varepsilon} \cdot \tilde{\kappa}_0\tilde{\kappa}_1^{-N_0} \cdot d_u(x, y)\\
&= \tilde{\kappa} e^{-S_n\psi^+(x)} \cdot e^{-2n\varepsilon} \cdot d_u(x, y)\\
&\geq \tilde{\kappa} e^{-\log r} \cdot e^{-2n\varepsilon} \cdot d_u(x, y),
\end{aligned}
\end{eqnarray}
where $d_u$ is the metric induced by the Riemannian structure on the unstable manifold, $n-1=pN_0+q$ and $0\leq q<N_0$, $\tilde{\kappa}=\tilde{\kappa}_0\tilde{\kappa}_1^{-N_0}$.
It follows from (\ref{estimationofn}) that $n\leq -\frac{1}{\log\tilde{\kappa}_2}\log r+1$. Therefore combining with (\ref{unstablees}) one has
\begin{eqnarray*}
\begin{aligned}
d_u(f^{n-1}x, f^{n-1}y)&\geq \tilde{\kappa} r^{-1} e^{\frac{2\varepsilon}{\log\tilde{\kappa}_2}\log r - 2\varepsilon} \cdot d_u(x,y)\\
&\geq \kappa' \cdot r^{-1+\frac{2\varepsilon}{\log\tilde{\kappa}_2}} \cdot d_u(x,y).
\end{aligned}
\end{eqnarray*}
Thus
$$d_u(x, y)\leq (\kappa')^{-1} \cdot r^{1-\frac{2}{\log\tilde{\kappa}_2}\varepsilon} \cdot d_u(f^{n-1}x, f^{n-1}y).$$

For every $y\in R$, by (\ref{continuousofdf}), (\ref{smallmol}) and (\ref{estimationofm}), we obtain there is $\eta\in W^s_{\text{loc}}(x, f)$ such that
\begin{eqnarray}\label{stablees}
\begin{aligned}
d_s(f^{-m}x, f^{-m}y)&\geq m(D_\eta f^{-m}|_{E^s_\eta}) \cdot d_s(x,y)\\
&\geq \prod_{k=0}^{p'-1} m(D_{f^{-kN_0}\eta}f^{-N_0}|_{E^s_{f^{-kN_0}\eta}}) \cdot \tilde{\kappa}_0 \cdot d_s(x, y)\\
&\geq \prod_{k=0}^{p'-1} m(D_{f^{-kN_0}x}f^{-N_0}|_{E^s_{f^{-kN_0}x}}) \cdot e^{-m\varepsilon} \cdot \tilde{\kappa}_0 \cdot d_s(x, y)\\
&\geq \prod_{k=0}^{p'-1} |\det(D_{f^{-kN_0}x}f^{-N_0}|_{E^s_{f^{-kN_0}x}})|^{\frac{1}{d_1}} \cdot e^{-2m\varepsilon} \cdot \tilde{\kappa}_0 \cdot d_s(x, y)\\
&\geq |\det (D_xf^{-m}|_{E^s_x})|^{\frac{1}{d_1}} \cdot e^{-2m\varepsilon} \cdot \tilde{\kappa}_0\tilde{\kappa}_1^{-N_0} \cdot d_s(x, y)\\
&= \tilde{\kappa} e^{-S_{-m}\psi^-(x)} \cdot e^{-2m\varepsilon} \cdot d_s(x, y)\\
&\geq \tilde{\kappa} e^{-\log r} \cdot e^{-2m\varepsilon} \cdot d_s(x, y),
\end{aligned}
\end{eqnarray}
where $d_s$ is the metric induced by the Riemannian structure on the stable manifold, $m=p'N_0+q'$ and $0\leq q'<N_0$, $\tilde{\kappa}=\tilde{\kappa}_0\tilde{\kappa}_1^{-N_0}$.
It follows from (\ref{estimationofm}) that $m\leq -\frac{1}{\log\tilde{\kappa}_2}\log r+1$. Therefore combining with (\ref{stablees}) one has
\begin{eqnarray*}
\begin{aligned}
d_s(f^{-m}x, f^{-m}y)&\geq \tilde{\kappa} r^{-1} e^{\frac{2\varepsilon}{\log\tilde{\kappa}_2}\log r - 2\varepsilon} \cdot d_s(x,y)\\
&\geq \kappa' \cdot r^{-1+\frac{2\varepsilon}{\log\tilde{\kappa}_2}} \cdot d_s(x,y).
\end{aligned}
\end{eqnarray*}
Thus
$$d_s(x, y)\leq (\kappa')^{-1} \cdot r^{1-\frac{2}{\log\tilde{\kappa}_2}\varepsilon}\cdot d_s(f^{-m}x, f^{-m}y).$$
Put $\kappa_2=(\kappa')^{-1}\text{diam} \mathcal{P}$ and $\kappa_3=\displaystyle\frac{2}{\log\tilde{\kappa}_2}$. Therefore $|R|\leq \kappa_2 \cdot r^{1-\kappa_3\varepsilon}$. Similarly, $|R|\geq \kappa_1 \cdot r^{1+\kappa_3\varepsilon}$. This completes the proof of the claim.

\end{proof}
By Theorem~1.2 of \cite{Pesin}, with this claim, the cover in the definition of Hausdorff dimension can be taken from $\mathcal{R_{\rho}}$ only.

We proceed to prove  Proposition \ref{key}. By a slight lack of precision (in fact, the proof is valid when we drop the constants for convenience),  we write
\begin{equation}\label{diameterofr}
r^{1+\kappa_3\varepsilon} \leq |R| \leq r^{1-\kappa_3\varepsilon}
\end{equation}
for $R\in R(r)$.
Note that $\theta>\dim_H A$ implies
$$ \theta > \dim_H(A\cap\Gamma_\varepsilon).$$
Hence there exists $\rho_0>0$ such that for every $\rho\in(0, \rho_0)$, $N_1^\pm(\rho)\geq N_1(\rho)\geq N_0$ and
\begin{equation}\label{hausdorffd}
\mathcal{H}_\rho^{\theta}(A\cap\Gamma) \leq \sum_{\ell}|R_\ell|^{\theta} \leq 1,
\end{equation}
where $\{R_\ell\}_\ell$ is some appropriately chosen countable cover of $A\cap\Gamma_\varepsilon$ by rectangles
$$R_\ell\in \mathcal{R}_\rho,\quad R_\ell=\pi(C_\ell)\quad \text{for some}\quad C_\ell\in \mathcal{C}_\rho.$$

For every $R_\ell\in\mathcal{R}_\rho$, there is some $r_\ell\in(0, \rho)$ such that $R_\ell\in R(r_\ell)$. Thus there exists some corresponding finite sequence
$$C=[i_{-n_\ell^-}^\ell \cdots i_{-1}^\ell. i_0^\ell \cdots i_{n_\ell^+-1}^\ell] \in C(r_\ell)$$
such that $R_\ell=\pi(C)$. It follows from (\ref{diameterofr}), (\ref{estimationofn}) and (\ref{2}) that
\begin{eqnarray*}
\begin{aligned}
\log |R_\ell|&\geq (1+\kappa_3\varepsilon)\log r_\ell\\
&>(1+\kappa_3\varepsilon) S_{n_\ell^+}\psi^+(\pi(\underline{i}^\ell))\\
&>-(\chi^u+2\varepsilon)n_\ell^+\cdot (1+\kappa_3\varepsilon)
\end{aligned}
\end{eqnarray*}
for some $\underline{i}^\ell\in[i_{-n_\ell^-}^\ell \cdots i_{-1}^\ell. i_0^\ell \cdots i_{n_\ell^+-1}^\ell]$. Thus combining with (\ref{hausdorffd}) we can estimate
\begin{eqnarray}\label{star9}
\begin{aligned}
\sum_\ell e^{-(\chi^u+2\varepsilon) n_\ell^+ \theta (1+\kappa_3\varepsilon)}&<\sum_\ell |R_\ell|^{\theta}\\
&\leq 1.
\end{aligned}
\end{eqnarray}
By (\ref{2}), (\ref{estimationofpsi-}) and (\ref{estimationofpsi+}), one has
\begin{eqnarray*}
\begin{aligned}
n_\ell^-(\chi^s-2\varepsilon)&<S_{-n_\ell^-}\psi^-(\pi(\underline{i}^\ell))\\
                             &<\log r_\ell\\
                             &\leq S_{n_\ell^+}\psi^+(\pi(\underline{i}^\ell)) + C_0\\
                             &<-n_\ell^+(\chi^u-2\varepsilon) + C_0.
\end{aligned}
\end{eqnarray*}
This implies
$$n_\ell^+\chi^u + n_\ell^-\chi^s < 2\varepsilon (n_\ell^+ + n_\ell^-) + C_0.$$
Therefore
\begin{eqnarray}\label{star10}
\begin{aligned}
-n_\ell^+ \frac{\chi^u}{\chi^s} &= n_\ell^- - n_\ell^+\frac{\chi^u}{\chi^s} - n_\ell^-\frac{\chi^s}{\chi^s}\\
                                &=n_\ell^- - \frac{n_\ell^+\chi^u + n_\ell^-\chi^s}{\chi^s}\\
                                &< n_\ell^- - \frac{2\varepsilon(n_\ell^+ + n_\ell^-) + C_0}{\chi^s}.
\end{aligned}
\end{eqnarray}
Hence combining with (\ref{LEformular}), (\ref{delta}), (\ref{varepsilon}) and (\ref{N0}) we obtain
\begin{eqnarray*}
\begin{aligned}
(1+\kappa_3\varepsilon)\theta n_\ell^+\chi^u &=(1+\kappa_3\varepsilon)\theta n_\ell^+\chi^u\cdot \frac{h_\mu(f)}{\dim_H\mu}\left(\frac{1}{\chi^u}- \frac{1}{\chi^s}\right)\\
                      &=(1+\kappa_3\varepsilon) h_\mu(f)\cdot \frac{\theta}{\dim_H\mu}\left(n_\ell^+- n_\ell^+\frac{\chi^u}{\chi^s}\right)\\
                      &< h_\mu(f) \frac{\theta}{\dim_H\mu}(1+\kappa_3\varepsilon) \left[n_\ell^+ + n_\ell^- -\frac{2\varepsilon(n_\ell^-+n_\ell^+)+C_0}{\chi^s}\right]\\
                      &= h_\mu(f)(n_\ell^+ + n_\ell^-)\frac{\theta}{\dim_H\mu}(1+\kappa_3\varepsilon) \left[1+\frac{2\varepsilon}{|\chi^s|}+\frac{C_0}{|\chi^s|(n_\ell^+ + n_\ell^-)}\right]\\
                      &\leq h_\mu(f)(n_\ell^+ + n_\ell^-)\left(1-\frac\delta2\right)\left(1+\frac\delta4+\frac\delta4\right)\\
                      &= h_\mu(f)(n_\ell^+ + n_\ell^-)\left(1-\frac{\delta^2}{4}\right).
\end{aligned}
\end{eqnarray*}
Thus
\begin{eqnarray}\label{starstar}
\begin{aligned}
&(1+\kappa_3\varepsilon) \theta n_\ell^+ (\chi^u + 2\varepsilon)\\
\leq& h_\mu(f) (n_\ell^+ + n_\ell^-)\left[ 1-\frac{\delta^2}{4} + \frac{2\varepsilon n_\ell^+\theta(1+\kappa_3\varepsilon)}{h_\mu(f)(n_\ell^+ + n_\ell^-)}\right]\\
<& h_\mu(f) (n_\ell^+ + n_\ell^-)\left[ 1-\frac{\delta^2}{4} + \frac{2\varepsilon \theta(1+\kappa_3\varepsilon)}{h_\mu(f)}\right]\\
<& h_\mu(f) (n_\ell^+ + n_\ell^-)\left( 1-\frac{\delta^2}{4} + \frac{\delta^2}{8}\right)\\
=& (n_\ell^+ + n_\ell^-)h_\mu(f) \left( 1- \frac{\delta^2}{8} \right),
\end{aligned}
\end{eqnarray}
the last inequality is by (\ref{varepsilon}). Consider now the family of cylinders of length $n_\ell^+ + n_\ell^-$ given by
\begin{eqnarray*}
\begin{aligned}
\mathcal{U}=\{U_\ell: &U_\ell=\pi^+(\sigma^{-n_\ell^-}(C_\ell)),\\
& \text{ where } C_\ell\in\mathcal{C}_\rho \text{ are taken from the family used in the estimate (\ref{hausdorffd})}  \}.
\end{aligned}
\end{eqnarray*}
Denote $s=h_\mu(f)\left(1-\frac{\delta^2}{8}\right)$. It follows from (\ref{measurableentropy}) together with (\ref{varepsilon}) that
$$h(\sigma, \Sigma_p)=h(\sigma^+, \Sigma_p^+)>s.$$
By (\ref{starstar}) and (\ref{star9}), one has
\begin{eqnarray*}
\begin{aligned}
\sum_{U_\ell\in\mathcal{U}} e^{-s<U_\ell>} &= \sum_{U_\ell\in\mathcal{U}} e^{-s(n_\ell^-+n_\ell^+)}\\
                                           &\leq \sum_{U_\ell\in\mathcal{U}} e^{-(\chi^u +2\varepsilon)\theta n_\ell^+(1+\kappa_3\varepsilon)}\\
                                           &<1.
\end{aligned}
\end{eqnarray*}
For $(\sigma^+, \Sigma_p^+)$, there is some function $H: \mathbb{N}\to \mathbb{R}$ such that
$$\lim_{n\to\infty}H(n)=h(\sigma^+, \Sigma_p^+).$$
Recall that
$$I^{+}(\mathcal{U})=\{\underline{i}\in \Sigma^{+}_{p}: \text{ for every } n, m \in\mathbb{N}\cup\{0\} \text{ with } n<m, \text{ we have } (i_{n}\cdots i_{m})\notin \mathcal{U}\}.$$
Since $\mathcal{U}\subset \bigcup_{n\geq 2N_0} \Sigma_{p,n}^+$, then Proposition \ref{Do2} implies that
\begin{equation}\label{estimationofentropy}
h(\sigma^+, I^+(\mathcal{U}))\geq H(N_0).
\end{equation}
Let
$$I(\mathcal{U})\triangleq \{\underline{i}\in\Sigma_p: \pi^+(\underline{i})\in I^+(\mathcal{U})\}.$$

\begin{Claim}
$\pi\left(I(\mathcal{U})\right)\subset I^+_{f|_{\Gamma_\varepsilon}}(A\bigcap\Gamma_\varepsilon)$.
\end{Claim}

\begin{proof}
For every $x\in \pi\left(I(\mathcal{U})\right)$, there is $\underline{i}\in I(\mathcal{U})$ such that $\pi(\underline{i})=x$. Therefore $\pi^+(\underline{i})\in I^+(\mathcal{U})$. Then we obtain $(\sigma^+)^k\circ \pi^+(\underline{i}) \notin U_\ell$ for each $U_\ell\in\mathcal{U}$ and $k\geq 0$. Since $U_\ell=\pi^+(\sigma^{-n_\ell^-}(C_\ell))$ and $\pi^+\circ \sigma =\sigma^+\circ \pi^+$, then one has
$$(\sigma^+)^k\circ \pi^+(\underline{i}) \notin (\sigma^+)^{-n_\ell^-} \circ \pi^+(C_\ell)\quad\text{for~each~} k\geq 0.$$
Hence $\sigma^{k+n_\ell^-}(\underline{i})\notin C_\ell$ for every $k\geq 0$. This yields that
$$\pi(\sigma^{k+n_\ell^-}(\underline{i})) \notin \pi(C_\ell) =R_\ell \text{ for each } k\geq 0.$$
Combining with $\pi\circ\sigma = f\circ\pi$, we conclude $f^{k+n_\ell^-}(x)\notin R_\ell$ for every $k\geq 0$. Hence $\omega_f(x)\cap R_\ell =\varnothing$ for each $R_\ell$. Since $\{R_\ell\}$ covers $A\cap\Gamma_\varepsilon$, then
$$\omega_f(x)\cap (A\cap\Gamma_\varepsilon) =\varnothing.$$
This shows that $x\in I^+_{f|_{\Gamma_\varepsilon}}(A\bigcap\Gamma_\varepsilon)$.

\end{proof}

The monotonicity of entropy, the above claim and (\ref{estimationofentropy}) imply
$$h(f|_{\Gamma_\varepsilon}, I^{+}_{f|_{\Gamma_\varepsilon}}(A\cap \Gamma_\varepsilon))\geq h(f|_{\Gamma_\varepsilon}, \pi(I(\mathcal{U}))) =h(\sigma, I(\mathcal{U})) \geq h(\sigma^+, I^+(\mathcal{U})) \geq H(N_{0}).$$
Letting $N_{0}$ tend to infinity, we get that ~$h(f|_{\Gamma_\varepsilon}, I^{+}_{f|_{\Gamma_\varepsilon}}(A\cap \Gamma_\varepsilon))=h(f|_{\Gamma_\varepsilon})$.

\end{proof}

\subsection{Proof of Theorem \ref{main2}}
(i)%\begin{proof}[\bf{Proof of Theorem \ref{measure}}]
Recall that $\chi^s(\mu)$ and $\chi^u(\mu)$ are the negative and positive Lyapunov exponents of $\mu$ respectively. Wang and Cao \cite{WC} proved that
$$\dim_H \mu = h_\mu(f) \left(\frac{1}{\chi^u(\mu)} - \frac{1}{\chi^s(\mu)}\right).$$
Since $\dim_H\mu>0$, then $h_\mu(f)>0$.
By Proposition \ref{horseshoe} and Remark \ref{rem3}, for any given  $\varepsilon > 0$,  there exist $\delta(\varepsilon)>0$ and a $(\mu, \varepsilon)$ average conformal horseshoe  $W_{\varepsilon}\subset W$ such that
\begin{align*}
h(f|_{W_{\varepsilon}})\geq h_{\mu}(f)-\varepsilon, \quad \dim_{H}(W_{\varepsilon}\cap W_{\text{loc}}^{\ast}(x, f))\geq \frac{h_{\mu}(f)}{|\chi^{\ast}(\mu)|}-\delta(\varepsilon)\quad  \text{and }
\end{align*}
\begin{align}\label{starstarstarstarstar}
\lim_{\varepsilon\rightarrow 0}\dim_{H} W_{\varepsilon}=\dim_{H}\mu,
\end{align}
where  $\ast = s, u$.
Applying Proposition \ref{key} to  $(W_{\varepsilon}, f)$, we get that
$$h(f|_{W_{\varepsilon}}, I^{+}_{f|_{W_{\varepsilon}}}(A\cap W_{\varepsilon}))=h(f|_{W_{\varepsilon}})\geq h_{\mu}(f)-\varepsilon.$$
Since  $I^{+}_{f|_W}(A)\supset I^{+}_{f|_{W_{\varepsilon}}}(A\cap W_{\varepsilon})$, then
$$h(f|_{W}, I^{+}_{f|_{W}}(A))\geq h(f|_{W_{\varepsilon}}, I^{+}_{f|_{W_{\varepsilon}}}(A\cap W_{\varepsilon}))\geq h_{\mu}(f)-\varepsilon.$$
The arbitrariness  of  $\varepsilon > 0$ implies that
$$h(f|_{W}, I^{+}_{f|_{W}}(A))\geq h_{\mu}(f).$$

Fix any $x\in I^+_{f|_{W_\varepsilon}}(A\cap W_\varepsilon)$. For every $y\in W_\varepsilon\cap W_{\text{loc}}^s(x, f)$, one gets $\omega_f(y)\subset \omega_f(x)$. Therefore $y\in I^+_{f|_{W_\varepsilon}}(A\cap W_\varepsilon)$. This shows that
$$W_\varepsilon\cap W_{\text{loc}}^s(x, f) \subset I^+_{f|_{W_\varepsilon}}(A\cap W_\varepsilon).$$
Hence $I^{+}_{f|_{W_{\varepsilon}}}(A\cap W_{\varepsilon})$ is  $s$-{\it saturated} invariant. Remark $3.2$ and Remark $3.3$ in \cite{WWCZ} told us that
\begin{eqnarray*}
\begin{aligned}
&\ \dim_{H}\left(W_{\varepsilon}\cap W^{u}_{\text{loc}}(x, f)\right)\\
=&\ \sup\left\{\frac{h_\nu(f)}{\chi^u(\nu)}: \nu\in\mathcal{M}_{inv}(f|_{W_\varepsilon})\right\}\\
\leq&\ \frac{h(f|_{W_\varepsilon})}{\chi^u(\mu)-\varepsilon}.
\end{aligned}
\end{eqnarray*}
Combining with Proposition \ref{relation2} and Proposition \ref{key}, one has for any  $x\in I^{+}_{f|_{W_{\varepsilon}}}(A\cap W_{\varepsilon})$,
\begin{eqnarray}\label{starstarstar}
\begin{aligned}
&\ \dim_{H}\left(I^{+}_{f|_{W_{\varepsilon}}}(A\cap W_{\varepsilon})\cap W^{u}_{\text{loc}}(x, f)\right)\\
=&\ \displaystyle\frac{\dim_{H}\left(I^{+}_{f|_{W_{\varepsilon}}}(A\cap W_{\varepsilon})\cap W^{u}_{\text{loc}}(x, f)\right)}{\dim_{H}\left(W_{\varepsilon}\cap W^{u}_{\text{loc}}(x, f)\right)}\cdot \dim_{H}\left(W_{\varepsilon}\cap W^{u}_{\text{loc}}(x, f)\right)\\
\geq&\ \displaystyle\frac{h(f|_{W_{\varepsilon}}, I^{+}_{f|_{W_{\varepsilon}}}(A\cap W_{\varepsilon}))}{h(f|_{W_{\varepsilon}} )}\frac{\chi^{u}(\mu)-\varepsilon}{\chi^{u}(\mu)+\varepsilon}\ \dim_{H}\left(W_{\varepsilon}\cap W^{u}_{\text{loc}}(x, f)\right)\\
=&\ \displaystyle\frac{\chi^{u}(\mu)-\varepsilon}{\chi^{u}(\mu)+\varepsilon}\cdot \dim_{H}\left(W_{\varepsilon}\cap W^{u}_{\text{loc}}(x, f)\right).
\end{aligned}
\end{eqnarray}
It is obvious  that for any  $y\in I^{+}_{f|_{W_{\varepsilon}}}(A\cap W_{\varepsilon})\cap W^{u}_{\text{loc}}(x, f)$,
\begin{equation*}
I^{+}_{f|_{W_{\varepsilon}}}(A\cap W_{\varepsilon})\supset W_{\varepsilon}\cap W^{s}_{\text{loc}}(y, f).
\end{equation*}
Let
$$D_x\triangleq \left\{[y,z]: y \in I^{+}_{f|_{W_{\varepsilon}}}(A\cap W_{\varepsilon})\cap W^{u}_{\text{loc}}(x, f), \ z \in W_{\varepsilon}\cap W^{s}_{\text{loc}}(x, f)\right\},$$
where $[y,z] = W_{\text{loc}}^s(y, f) \cap W^u_{\text{loc}}(z, f)$. Then we obtain
\begin{equation}\label{starstarstarstar}
D_x \subset I^{+}_{f|_{W_{\varepsilon}}}(A\cap W_{\varepsilon}).
\end{equation}
%By (H1),  $dim_{H}~I^{+}_{f|_{W_{\varepsilon}}}(A\cap W_{\varepsilon})\geq dim_{H}(W_{\varepsilon}\cap W^{s}_{\text{loc}}(y, f))$.

It follows from Theorem $3.3$ in \cite{WWCZ} that for any $r\in (0,1)$, there exists $C_r > 0$ such that both the stable holonomy maps  and the unstable holonomy maps  are $(C_r, r)$ H$\ddot{\text{o}}$lder continuous. Thus under a $(C_r, r)$ H$\ddot{\text{o}}$lder continuous local coordinate transform, we can view  $W_{\varepsilon}$ as the direct product of the local unstable leaf and local stable leaf.  By Lemma \ref{H} and the arbitrariness of $r\in (0,1)$, we have
$$\dim_H D_x = \dim_H \left( ( I^{+}_{f|_{W_{\varepsilon}}}(A\cap W_{\varepsilon})\cap W^{u}_{\text{loc}}(x, f) ) \times \left(W_{\varepsilon}\cap W^{s}_{\text{loc}}(x, f)\right) \right).$$
Now we apply Lemma  \ref{dimensionlemma} to $B_{1}=I^{+}_{f|_{W_{\varepsilon}}}(A\cap W_{\varepsilon})\cap W^{u}_{\text{loc}}(x, f)$, $B_{2}=W_{\varepsilon}\cap W^{s}_{\text{loc}}(x, f)$ and $E=D_x$ that
$$\dim_H D_x \ge \dim_H ( I^{+}_{f|_{W_{\varepsilon}}}(A\cap W_{\varepsilon})\cap W^{u}_{\text{loc}}(x, f))  + \dim_H (W_{\varepsilon}\cap W^{s}_{\text{loc}}(x, f)).$$
Therefore
\begin{align*}
\dim_{H} I^{+}_{f|_W}(A) & \ge  \dim_H I^{+}_{f|_{W_{\varepsilon}}}(A\cap W_{\varepsilon}) \\
&\ge \dim_H D_x \\
&\ge \dim_{H}(W_{\varepsilon}\cap W^{s}_{\text{loc}}(x, f))+\frac{\chi^{u}(\mu)-\varepsilon}{\chi^{u}(\mu)+\varepsilon} \cdot \dim_{H}(W_{\varepsilon}\cap W^{u}_{\text{loc}}(x, f)),
\end{align*}
the second inequality is by (\ref{starstarstarstar}), and the third inequality is by (\ref{starstarstar}).
%On the other hand, we have
%\begin{displaymath}
%\lim_{\varepsilon\rightarrow 0}dim_{H} W_{\varepsilon}=dim_{H} \mu
%\end{displaymath}
By (\ref{starstarstarstarstar})
and
$$\dim_{H}(W_{\varepsilon}\cap W^{s}_{\text{loc}}(x, f))+\dim_{H}(W_{\varepsilon}\cap W^{u}_{\text{loc}}(x, f))=\dim_{H}~W_{\varepsilon},$$
letting $\varepsilon \to 0$, one has
$$\dim_{H}~I^{+}_{f|_W}(A)\geq \dim_{H}~\mu.$$
Since $I^+_{f|_W}(A) = E^+_{f|_W}(A) \cup \bigcup_{n\geq0}f^{-n}A$, where $\tilde{A}=\{x\in A:\omega_{f}(x)\cap A=\varnothing\}$, then
$$\dim_{H}~I^{+}_{f|_W}(A)=\max_{n\geq 0}\left\{\dim_{H}~E^{+}_{f|_W}(A), \dim_{H}f^{-n}(\tilde{A})\right\}.$$
It follows from the property of the Hausdorff dimension that
$$\dim_{H} f^{-n}(\tilde{A})=\dim_{H} \tilde{A}\leq \dim_{H} A < \dim_{H}\mu.$$
Thus
$$\dim_{H} E^{+}_{f|_{W}}(A)\geq \dim_{H} \mu.$$
%\end{proof}

(ii)It follows from the assumption that $h(f|_{W},A)<h_{\mu}(f|_{W})$ that $h_\mu(f)>0$. Then the results follows from the same argument as that in the proof of (i).

%By Proposition \ref{horseshoe} and Remark \ref{rem3}, given  $\varepsilon>0$ small enough,  there exists a  $(\mu,  \varepsilon)$ average conformal horseshoe  $W_{\varepsilon}\subset W$ such that
%\begin{align*}
%h(f|_{W_{\varepsilon}})\geq h_{\mu}(f)-\varepsilon, \quad \dim_{H}(W_{\varepsilon}\cap W_{\text{loc}}^{\ast}(x, f))\geq \frac{h_{\mu}(f)}{|\chi^{\ast}(\mu)|}-\delta(\varepsilon)
%\end{align*}
%hold for  $\ast=s, u$ respectively and
%\begin{align*}
%\lim_{\varepsilon\rightarrow 0}\dim_{H} W_{\varepsilon}=\dim_{H}\mu.
%\end{align*}
%By the assumption that $h(f|_{W},A)<h_{\mu}(f|_{W})$, for sufficiently small $\varepsilon > 0$, we have  $$h(f|_{W_{\varepsilon}}, A) \le  h(f|_W, A) < h(f|_{W_{\varepsilon}}).$$
%Then we have $h(f|_{W_{\varepsilon}},E^{+}_{f|_{W_{\varepsilon}}}(A))=h(f|_{W_{\varepsilon}})$. Notice that
%$$I^{+}_{f|_{W_{\varepsilon}}}(A)=E^{+}_{f|_{W_{\varepsilon}}}(A)\cup \bigcup_{n\geq 0}f^{-n}(\tilde{A}),$$
%where $\tilde{A}=\{x\in A:\omega_{f}(x)\cap A=\varnothing\}$. The monotonicity of the entropy implies that  $$h(f|_{W_{\varepsilon}},I^{+}_{f|_{W_{\varepsilon}}}(A))=h(f|_{W_{\varepsilon}}).$$
%Then the results follows from the same argument as that in the proof of (i).

\subsection{Proof of Corollary \ref{ddd}}
By the definition of the dynamical dimension and the fact $f$ is average conformal on $W$, there exists a sequence of conformal hyperbolic ergodic measures ~$\{\mu_{n}\}_{n}$ supported on ~$W$ satisfying that $h_{\mu_{n}}(f|_{W})>0$ such that
\begin{displaymath}
\lim_{n\rightarrow \infty }\dim_{H}~\mu_{n}=DD(f|_{W}).
\end{displaymath}
Thus for  $n>0$ large enough, we have
$$\dim_{H}A<\dim_{H}~\mu_{n}\leq DD(f|_{W}).$$
By Theorem \ref{main2},
$$\dim_{H}E^{+}_{f|_{W}}(A)\geq \dim_{H}~\mu_{n}.$$
Letting  $n$ tend to infinity, we have  $\dim_{H}E^{+}_{f|_{W}}(A)\geq DD(f|_{W})$.\\

{\bf Acknowledgements.} Authors are grateful for Professor Yongluo Cao's suggestions which led to a great improvement of the manuscript.

\end{document}